\newtheorem{theorem}{Theorem}[section]
\newtheorem{corollary}[theorem]{Corollary}
\newtheorem{lemma}[theorem]{Lemma}
\newtheorem{problem}[theorem]{Problem}
\newtheorem{proposition}[theorem]{Proposition}
\def\RR{{\mathbb{R}}}
\def\NN{{\mathbb{N}}}
\def\11{\textbf{$1$}}
\def\CC{{\mathbb{C}}}
\def\KK{{\mathbb{K}}}
\def\eqref{\emph}
\begin{document}

\title[The Mazur--Ulam property on $\ell_\infty$-sum  and $c_0$-sum ]{The Mazur--Ulam property in $\ell_\infty$-sum  and $c_0$-sum of   strictly convex Banach spaces}

\author[J. Becerra Guerrero]{Julio Becerra Guerrero}

\address[J. Becerra Guerrero]{Departamento de An{\'a}lisis Matem{\'a}tico, Facultad de
Ciencias, Universidad de Granada, 18071 Granada, Spain.}
\email{juliobg@ugr.es}


\subjclass[2010]{Primary 47B49, 46B04, 46B45, Secondary 46A22, 46B20, 46B04, 46A16, 46E40.}

\keywords{Tingley's problem; Mazur--Ulam property; extension of isometries.}

\date{}

\begin{abstract} 	In this paper we deal with those Banach spaces $Z$ which satisfy the Mazur--Ulam property, namely that every surjective isometry $\Delta$ from the unit sphere of $Z$ to the unit sphere of any Banach space $Y$ admits an unique extension to a surjective real-linear isometry from $Z$ to $Y$. We prove that for every countable set $\Gamma$ with $\vert \Gamma \vert \geq 2$,  the Banach space $\bigoplus_{\gamma \in \Gamma}^{c_0} X_\gamma $ satisfies the Mazur--Ulam property, whenever the Banach space $X_\gamma $  is  strictly convex with dim$((X_\gamma )_{\RR})\geq 2$ for every $\gamma $. Moreover we prove that  the Banach space $C_0(K,X)$ satisfies the Mazur--Ulam property whenever $K$ is a  totally disconnected locally compact Hausdorff space  with  $\vert K\vert \geq 2$, and  $X$ is a strictly convex separable Banach space with dim$(X_{\RR})\geq 2$.  As consequences, we obtain the following results: (1) Every weakly countably determined Banach space can be equivalently renormed so that it satisfies the Mazur--Ulam property. (2) If $X$ is a strictly convex Banach space with dim$(X_{\RR}) \geq 2$,   then  $C(\mathfrak{C} ,X)$ satisfies the Mazur--Ulam property, where $ \mathfrak{C}$ denotes the Cantor set.
\end{abstract}

\maketitle
\thispagestyle{empty}

\section{Introduction}
Let $X$ and $Y$ be two Banach spaces over $\KK$ ($\RR$ or $\CC$) with unit spheres $S_X$ and $S_Y$, respectively. Then the clasical Mazur--Ulam theorem states that  every surjective isometry  $\Delta :X\to Y$ is  affine. As usual, by a \emph{convex body} of a normed space $X$ we mean a closed convex subset of $X$ with non-empty interior in $X$.  In 1972, Mankiewicz \cite{Mank1972} proved that every surjective isometry between convex bodies in two arbitrary normed spaces can be uniquely extended to an affine function between the spaces. Motivated by this observation, Tingley \cite{Ting1987} raised the following extension problem:

\begin{problem}\label{problem-Tingley} Suppose that $\Delta :S_X\to S_Y$ is a surjective isometry. Is $\Delta$ neccesarily the restriction of a surjective real-linear isometry from $X$ to $Y$?

\end{problem}

This problem has been addressed in many papers, and  has been answered affirmatively  for particular choices of $X$ and $Y$ (see \cite{Ding2002, Di:p, Di:C, Di:8, Ding07,  FerPe17b, FerPe17c, Liu2007, Ta:8, Tan2016, Wang}).

Let us say that a Banach space $X$ satisfies the \emph{Mazur--Ulam property} if, for any Banach space $Y$, every surjective isometry $\Delta: S_X\to S_Y$ admits an extension to a surjective real-linear isometry from $X$ to $Y$. The pioneering paper dealing with this property is that of Ding \cite{Ding07}, who proves that the space $c_0(\NN, \RR)$ of all null sequences of real numbers satisfies the Mazur-Ulam property. More examples of Banach spaces satisfying the Mazur--Ulam property are  $c(\Gamma ,\KK)$, $c_0(\Gamma , \KK)$, and $\ell_\infty (\Gamma , \KK)$ for any set $\Gamma $ (see \cite{Liu2007} for $\KK =\RR $ and \cite{JVMorPeRa2017, APe2017} for $\KK =\CC $).  More recently, it has been shown that this property is satisfied by unital complex C$^*$-algebras and real von Neumann algebras \cite{MoriOza2018},  by JBW$^*$-triples with rank one or rank bigger than or equal to three \cite{BeCuFePe}, and by the space $C(K, H)$ of all continuous functions from any compact Hausdorff space $K$ to a real or complex Hilbert space $H$ with $\dim (H_ \RR )\geq 2$ \cite{CuePer2}.  In \cite[Theorem 4.6]{JZLi}, Li  shows that, if $X_1$ and $X_2$ are strictly convex Banach spaces, then a surjective isometry $\Delta$ from the unit sphere of $X_1\bigoplus _\infty X_2$ to the unit sphere of any Banach space $Y$ admits an extension to a surjective real-linear isometry from $X_1\bigoplus _\infty X_2$ to $Y$ whenever  $\RR \Delta (S_{X_1})$ and $\RR \Delta (S_{X_2})$ are subspace of $Y$. Other references dealing with the Mazur--Ulam property are \cite{Pe2018, Ta:8, THL, TL1}. Anyway, as a matter of fact, Problem \ref{problem-Tingley} remains unanswered even if the Banach spaces $X$ and $Y$ are  two-dimensional  \cite{Ca-Sa, KadMar2012, Wang-Hua}.

The main aim of this paper is to provide the reader with new examples of Banach spaces satisfying the Mazur--Ulam property.

In Section 2, we revisit some previously known results, like \cite{BoMeNav}, \cite{JMNS}, \cite[Corollary 9]{MeNav1995}, and \cite[Theorem 5]{Peck1968}, in order to establish the following:
\begin{enumerate} \item If $\{X_\gamma \}_{\gamma \in \Gamma}$ is a family of strictly convex Banach spaces with dim$((X_\gamma )_{\RR}) \geq 2$ and $\vert \Gamma \vert \geq 2$ (where $\vert \cdot \vert $ means cardinality), then the closed unit ball of  $\bigoplus_{\gamma \in \Gamma}^{\ell_\infty } X_\gamma $ is the convex hull of its extreme points (Proposition \ref{convex-hull-extreme-point}).
\item If $X$ is a strictly convex Banach space with dim$(X_{\RR}) \geq 2$, and if $K$ is a totally disconnected  compact Hausdorff space with $\vert K\vert \geq 2$, the the closed unit ball of $C(K,X)$  is the convex hull of its extreme points  (Proposition  \ref{C(K,X)-Strong Mankiewicz property}).
\end{enumerate}

Section 3 is devoted to proving our results on the Mazur--Ulam property in $c_0$- and $\ell_\infty$-sums of  families of Banach spaces. We show that, if $\{X_\gamma \}_{\gamma \in \Gamma}$ is any family of strictly convex Banach spaces such that  $\vert \Gamma \vert \geq 2$, and such that dim$((X_\gamma )_{\RR})\geq 2 $ and the norm of $X_\gamma$  is G\^{a}teaux differentiable in a dense subset of its unit sphere for every $\gamma $, then both $\bigoplus_{\gamma \in \Gamma}^{c_0} X_\gamma $ and $\bigoplus_{\gamma \in \Gamma}^{\ell_\infty } X_\gamma $ satisfy the Mazur--Ulam property  (Theorem \ref{suma-Mazur-Ulam}). In the particular cases of  countable  $c_0$-sums and finite $\ell_\infty$-sums, the hypothesis of G\^{a}teaux differentiability of the norm can be removed. Indeed, we show that for  a family $\{X_n \}_{n\in \NN}$ of strictly convex Banach spaces with dim$(X_n)\geq 2$, the Banach spaces $X_{1} \oplus _\infty \cdots \oplus _\infty X_{n} $ ($n \geq 2$)  and $\bigoplus_{n \in \NN }^{c_0} X_n $ satisfies the Mazur--Ulam property (Theorem \ref{main}). As a direct consequence, we obtain that every strictly convex Banach space  can be equivalently renormed so that it satisfies the Mazur--Ulam property  (Corollary \ref{renorm}). For instance, every separable Banach space, every reflexive Banach space, and  more generally every weakly Lindelof Banach space (see \cite{ArMe} and \cite{DevGodZiz93}), can be equivalently renormed so that it satisfies the Mazur--Ulam property. A characterization in linear topological terms of the normed spaces which are strictly convex renormable  can be found in  \cite{MoOrTrZi}.

The concluding Section 4 is devoted to studying the Mazur--Ulam property  in the Banach space $C_0(K,X)$ of all continuous functions  vanishing at infinity from a locally compact Hausdorff space $K$ to a Banach space $X$.  As the most outstanding result in this setting, we show that, if $K$ is totally disconnected  with $\vert K\vert \geq 2$, if $X$ is  strictly convex with dim$(X_{\RR}) \geq 2$,  and if the norm of $X$  is G\^{a}teaux differentiable in a dense subset of its unit sphere, then $C_0(K,X)$ satisfies the Mazur--Ulam property (Theorem \ref{C(K)-disconex-non-smooth-localy}). In the case that $K$ is actually a metrizable compact space, the hypothesis of G\^{a}teaux differentiability of the norm can be removed. As a consequence, if $X$ is a strictly convex Banach space with dim$(X_{\RR}) \geq 2$,   then  $C(\mathfrak{C} ,X)$ satisfies the Mazur--Ulam property, where $ \mathfrak{C}$ denotes the Cantor set (Corollary \ref{Cantor}).

\smallskip

{\bf Notation.}
Given a  Banach space $X$, $B_X$, $S_X$, and $X^*$ shall stand for
the closed unit ball, the unit sphere, and the dual of $X$, respectively. We denote by $Ext(B_X)$ the set of all  extreme points of $B_X$.

Given a family $\{X_\gamma \}_{ \gamma \in \Gamma }$ of Banach spaces, we set  $Z_0:=\bigoplus_{\gamma \in \Gamma}^{c_0} X_\gamma $ and $Z_\infty :=\bigoplus_{\gamma \in \Gamma}^{\ell_{\infty}} X_\gamma $.  Given a subset $\mathcal R\subseteq \Gamma $, we denote by $P_{\mathcal R}$ the canonical projection from $\bigoplus_{\gamma \in \Gamma } X_\gamma $ to $\bigoplus_{\gamma \in \mathcal{R}} X_\gamma $. The symbol $Z$ shall stand for any of the spaces  $Z_0$ or $Z_\infty$.

We recall that  a Banach space $X$ is said to be \emph{strictly convex} if every element of $S_X$ is an extreme point of $B_X$.

Given a compact Hausdorff space  $K$, the symbol $C(K,X)$ shall stand for the Banach space of all continuous functions from $K$ to $X$ equipped with the sup norm. Given $f\in C(K)$ and $x\in X$, we denote by $f\otimes x$ the function in $C(K,X)$ defined by $(f\otimes x)(t):=f(t)x$ for all $t\in K$. If $K$ is a locally compact Hausdorff space, we denote by $C_0(K,X)$ the space of continuous $X$-valued functions on $K$ vanishing at infinity. Recall that $f:K\to X$ vanishes at infinity if for every $\varepsilon >0$ there exists a compact subset $K_\varepsilon$ of $K$ satisfying $\Vert f(t)\Vert <\varepsilon $  for all $t\in K\setminus K_\varepsilon$.

It is clear that $X$ satisfies the Mazur--Ulam property if and only if so does $X_{\RR}$. According to this remark, throughout this paper we shall assume that all Banach spaces are real.

\smallskip

\section{The strong Mankiewicz property: preliminary results}

The result of  Mankiewicz in \cite{Mank1972} is one of the main tools applied in those papers devoted to explore new progress on Tingley's problem and to determine new Banach spaces satisfying the Mazur--Ulam property.\smallskip

In the recent paper \cite{MoriOza2018}, Mori and Ozawa introduce new techniques that are essential for our work.  Following these authors, we shall say that a convex subset $C$ of a normed space $X$ satisfies the \emph{strong Mankiewicz property} if every surjective isometry $\Delta$ from $C$ to an arbitrary convex subset $L$ in a normed space $Y$ is affine. In \cite[Theorem 2]{MoriOza2018} it is show that some of the hypothesis in Mankiewicz's theorem can be somehow relaxed. The precise result reads as follows.

\begin{theorem}\label{Mori-Ozawa}\cite[Theorem 2]{MoriOza2018} Let $X$ be a Banach space such that the closed convex hull of $Ext(B_X)$  has non-empty interior in $X$. Then, every convex body $K\subseteq X$ has the strong Mankiewicz property. $\hfill\Box$
\end{theorem}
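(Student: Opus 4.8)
The plan is to show directly that a surjective isometry $\Delta\colon K\to L$, with $K$ a convex body in $X$ and $L$ an arbitrary convex subset of a normed space $Y$, is midpoint-preserving, i.e. $\Delta\big(\tfrac{x+y}{2}\big)=\tfrac{\Delta(x)+\Delta(y)}{2}$ for all $x,y\in K$; since $\Delta$ is continuous (being an isometry), a routine dyadic-subdivision argument then upgrades this to affineness, which is precisely the assertion that $K$ has the strong Mankiewicz property. After a translation we may assume $0\in\operatorname{int}(K)$ and, replacing $\Delta$ by $\Delta-\Delta(0)$, that $\Delta(0)=0$; since $K$ is closed in the Banach space $X$ it is complete, so $L=\Delta(K)$ is complete and closed in $Y$, and we may also assume $Y=\overline{\operatorname{span}}(L)$.

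The key point — and, I expect, the genuinely clever ingredient — is that an extreme point of $B_X$ forces uniqueness of metric midpoints in its direction. Concretely: if $e\in Ext(B_X)$ and $a,b\in X$ satisfy $b-a\in\mathbb{R}_{>0}\,e$, then the set of metric midpoints $M(a,b):=\{z\in X:\|z-a\|=\|z-b\|=\tfrac12\|a-b\|\}$ reduces to the single point $\tfrac{a+b}{2}$. Indeed, after an affine change of variables we may take $a=-e$, $b=e$, and a point $z$ with $\|z-e\|=\|z+e\|=1$ yields, on writing $u:=z-e$ (so $u\in S_X$ and $u+2e=z+e\in S_X$), the representation $e=\tfrac12\big((u+2e)+(-u)\big)$ with $u+2e,-u\in B_X$; extremality of $e$ then gives $u=-e$, i.e. $z=0=\tfrac{a+b}{2}$. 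Now, given $a,b\in K$ with $b-a$ a positive multiple of some $e\in Ext(B_X)$, the midpoint $\tfrac{a+b}{2}$ lies in $K$, so $M(a,b)\cap K=\{\tfrac{a+b}{2}\}$; applying the surjective isometry $\Delta$, which carries metric-midpoint sets to metric-midpoint sets, we get that $M(\Delta(a),\Delta(b))\cap L=\{\Delta(\tfrac{a+b}{2})\}$ is also a singleton, and since $\tfrac{\Delta(a)+\Delta(b)}{2}$ is a metric midpoint of $\Delta(a),\Delta(b)$ lying in the convex set $L$, it must equal $\Delta(\tfrac{a+b}{2})$. Iterating over dyadic subdivisions and using continuity of $\Delta$, one concludes that $\Delta$ is affine on every segment $[a,b]\subseteq K$ whose direction $\tfrac{b-a}{\|b-a\|}$ belongs to $Ext(B_X)$.

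The remaining step — passing from ``affine along extreme directions'' to ``affine on $K$'' — is where the hypothesis that $\overline{\operatorname{co}}(Ext(B_X))$ has non-empty interior is essential, and I expect it to be the main technical obstacle. Since $B_X$, hence $\overline{\operatorname{co}}(Ext(B_X))$, is symmetric, that hypothesis gives a ball $\overline{B}(0,3\rho)\subseteq\overline{\operatorname{co}}(Ext(B_X))$ for some $\rho>0$, so every vector of norm $<3\rho$ is a limit of convex combinations of extreme points of $B_X$. The difficulty is that such a convex combination $\sum_i\lambda_i e_i$ need not be reachable by a chain of extreme-direction segments lying in $K$, so affineness along extreme directions does not transfer for free. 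One way to close the gap is to pin down the images by means of the isometry: fixing $x_0\in\operatorname{int}(K)$ with $\Delta(x_0)=y_0$ and $\varepsilon>0$ with $x_0+\varepsilon B_X\subseteq K$, the previous step yields a map $S\colon Ext(B_X)\to S_Y$ with $\Delta(x_0+te)=y_0+tS(e)$ for $|t|\le\varepsilon$, and the isometry condition forces $\|sS(e)-s'S(e')\|=\|se-s'e'\|$ for all $e,e'\in Ext(B_X)$ and $|s|,|s'|\le\varepsilon$; one then argues that $S$ is the restriction of a linear isometry of $X$ onto $Y$ and that $\Delta=y_0+S(\,\cdot\,-x_0)$, using that $\overline{\operatorname{co}}(Ext(B_X))$ has interior to see that this prescription already covers a whole neighbourhood of $x_0$, after which the conclusion spreads over $\operatorname{int}(K)$ by connectedness and over $K$ by continuity. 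An alternative, perhaps cleaner, route is to first deduce from the same data that $L$ is itself a convex body in $Y$ and then invoke Mankiewicz's theorem (as recalled in the introduction) directly. Either way, the heart of the matter is the midpoint-uniqueness observation of the second paragraph; everything downstream is a matter of organising the passage to all of $K$.
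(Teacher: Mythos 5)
First, a remark on the comparison itself: the paper offers no proof of this statement. It is quoted verbatim from Mori and Ozawa, and the terminal box signals that the proof lives in the cited reference, so your attempt has to stand entirely on its own. Its first two paragraphs do stand. The observation that a segment whose direction lies in $Ext(B_X)$ has a \emph{unique} metric midpoint is correct (the identity $e=\tfrac12\bigl((u+2e)+(-u)\bigr)$ with $u+2e,-u\in B_X$ is exactly the right use of extremality), the transfer of this uniqueness through the surjective isometry $\Delta$ is correct, and the dyadic upgrade to affineness of $\Delta$ along every segment of $K$ with extreme direction is routine. This is indeed the right opening move.

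The third paragraph, however, is not a proof but a list of two unexecuted strategies, and the step it is meant to supply is the entire substance of the theorem. In route (a) you assert that the map $S\colon Ext(B_X)\to S_Y$ determined by $\Delta(x_0+te)=y_0+tS(e)$ ``is the restriction of a linear isometry of $X$ onto $Y$.'' Nothing you have established implies this: the identity $\|sS(e)-s'S(e')\|=\|se-s'e'\|$ only constrains $S$ on the cone over $Ext(B_X)$, and extracting additivity (the value $S(\lambda e+\mu e')$ is not even defined, and there is no reason yet for $\Delta(x_0+\lambda e+\mu e')$ to equal $y_0+\lambda S(e)+\mu S(e')$) is itself a Mankiewicz/Tingley-type problem, not a formality. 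In route (b) you propose to show that $L$ is a convex body and then quote Mankiewicz; but allowing $L$ to be an \emph{arbitrary} convex set is precisely the improvement of the Mori--Ozawa theorem over Mankiewicz's, and your data do not yield it: the points $y_0+tS(e)$ need not span a dense subspace of the closed linear span of $L$, let alone have convex hull with non-empty interior there, so this route is close to begging the question. What is missing --- and what actually consumes the hypothesis $\rho B_X\subseteq\overline{co}\,(Ext(B_X))$ --- is a quantitative argument showing that a Lipschitz function which is affine along all extreme directions on a ball (for instance $\psi\circ\Delta$ with $\psi\in B_{Y^*}$) is affine on a smaller ball; this requires approximating an arbitrary direction by convex combinations of extreme directions and controlling the accumulated midpoint defect, and until such a lemma (or an equivalent) is supplied, the proof is incomplete.
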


Throughout this section we shall work with  a family of nonzero Banach spaces $\{X_\gamma \}_{ \gamma \in \Gamma }$.

It is known that $p\in Ext(B_{Z_\infty})$ implies $\|p(\gamma )\|_{ X_\gamma } = 1$ for all $\gamma \in \Gamma$. In the case that $X_\gamma$ is strictly convex for every $\gamma \in \Gamma$, the converse implication is  true. Therefore we are provided with the following.

\begin{lemma}\label{extreme points} Suppose that $X_\gamma$ is strictly convex for every $\gamma \in \Gamma$.  Then $p\in Ext (B_{Z_\infty})$ if and only if $\|p(\gamma )\|_{X_\gamma } = 1$ for all $\gamma  \in \Gamma$.
\end{lemma}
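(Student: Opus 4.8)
The plan is to prove both implications directly from the definition of the $\ell_\infty$-sum and the definition of extreme point. The forward implication is the classical fact already recalled in the text, so the real content is the converse: assuming $X_\gamma$ is strictly convex for every $\gamma$, I want to show that any $p\in B_{Z_\infty}$ with $\|p(\gamma)\|_{X_\gamma}=1$ for all $\gamma$ is extreme. Suppose $p=\frac12(a+b)$ with $a,b\in B_{Z_\infty}$. Fix $\gamma\in\Gamma$. Then $p(\gamma)=\frac12(a(\gamma)+b(\gamma))$ in $X_\gamma$, and since $\|a(\gamma)\|_{X_\gamma}\le 1$, $\|b(\gamma)\|_{X_\gamma}\le 1$, while $\|p(\gamma)\|_{X_\gamma}=1$, the element $p(\gamma)$ is the midpoint of two points of $B_{X_\gamma}$ lying on $S_{X_\gamma}$. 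Because $p(\gamma)\in S_{X_\gamma}$ is an extreme point of $B_{X_\gamma}$ (this is exactly strict convexity of $X_\gamma$), we conclude $a(\gamma)=b(\gamma)=p(\gamma)$. Since $\gamma$ was arbitrary, $a=b=p$, so $p$ is extreme.

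For the forward implication, I would recall the short argument for completeness: if some coordinate has $\|p(\gamma_0)\|_{X_{\gamma_0}}=r<1$, then picking any $x\in X_{\gamma_0}$ with $\|x\|_{X_{\gamma_0}}=1-r$ and setting $a,b$ to agree with $p$ off $\gamma_0$ with $a(\gamma_0)=p(\gamma_0)+x$, $b(\gamma_0)=p(\gamma_0)-x$ gives $a,b\in B_{Z_\infty}$, $a\neq b$, and $p=\frac12(a+b)$, contradicting extremality. (This uses $X_{\gamma_0}\neq\{0\}$, which holds since all the $X_\gamma$ are nonzero, and in fact only needs a single coordinate to fail.)

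There is essentially no obstacle here: the only point requiring care is the coordinatewise reasoning, namely that extremality in the $\ell_\infty$-sum is equivalent to extremality in every coordinate once all coordinates are norm-one — and this is immediate from the fact that a convex combination in $Z_\infty$ is evaluated coordinatewise and that the unit ball of $Z_\infty$ is, by definition, the product of the unit balls of the $X_\gamma$. One should simply be mildly careful that strict convexity of $X_\gamma$ is being used in the exact form ``every point of $S_{X_\gamma}$ is extreme in $B_{X_\gamma}$'', which is precisely the definition recalled in the Notation section.
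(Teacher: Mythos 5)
Your proof is correct, and it is exactly the standard coordinatewise argument that the paper relies on: the paper states the lemma without proof, noting only that the forward implication is known and that strict convexity gives the converse, which is precisely what you spell out. The one cosmetic point is that in the converse you do not need to observe that $a(\gamma)$ and $b(\gamma)$ lie on $S_{X_\gamma}$ (though they do); extremality of $p(\gamma)$ in $B_{X_\gamma}$ already forces $a(\gamma)=b(\gamma)=p(\gamma)$ for $a(\gamma),b(\gamma)\in B_{X_\gamma}$.
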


The following lemma is folklore.

\begin{lemma}
Let $X$ be a Banach space with $\dim (X)\geq 2$. Then every element in $B_X$ can be expressed as a mean of two elements in $S_X$.
\end{lemma}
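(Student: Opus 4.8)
The plan is to reduce immediately to the case $\|x\| < 1$: if $\|x\| = 1$ then $x = \frac{1}{2}(x+x)$ with $x \in S_X$, so assume $\|x\| < 1$. It then suffices to produce a vector $w \in X$ with $\|x+w\| = \|x-w\| = 1$, for then $x = \frac{1}{2}\big((x+w)+(x-w)\big)$ is the desired mean of two elements of $S_X$.

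Fix any two-dimensional subspace $V$ of $X$ (possible since $\dim (X) \geq 2$) and let $S_V = S_X \cap V$ be its unit sphere, which is a connected set. For each $e \in S_V$ consider the convex continuous function $\phi_e(t) := \|x + te\|$ on $\RR$. Since $\phi_e(0) = \|x\| < 1$ and $\phi_e(t) \to +\infty$ as $t \to +\infty$, the intermediate value theorem yields some $t > 0$ with $\phi_e(t) = 1$; a short convexity argument (if $\phi_e(t_1) = 1$ with $t_1 > 0$, then $\phi_e(t) > 1$ for every $t > t_1$) shows this $t$ is unique, and I denote it $\alpha(e) > 0$. First I would verify that $\alpha : S_V \to (0,\infty)$ is continuous: the values $\alpha(e)$ are bounded by $1 + \|x\|$, and if $e_n \to e$ then any subsequential limit $\alpha^\ast$ of $(\alpha(e_n))$ satisfies $\|x + \alpha^\ast e\| = 1$ with $\alpha^\ast > 0$ (otherwise $\|x\| = 1$), hence $\alpha^\ast = \alpha(e)$ by uniqueness, so $\alpha(e_n) \to \alpha(e)$.

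Next I would consider the function $g(e) := \alpha(e) - \alpha(-e)$, which is continuous on the connected set $S_V$ and satisfies $g(-e) = -g(e)$. If $g$ vanished nowhere, then $g(S_V)$ would be an interval of constant sign, which is impossible since it contains both $g(e)$ and $-g(e)$; hence $g(e_0) = 0$ for some $e_0 \in S_V$. Setting $r := \alpha(e_0) = \alpha(-e_0) > 0$ and $w := r e_0$, we obtain $\|x+w\| = \|x + r e_0\| = 1$ and $\|x - w\| = \|x + r(-e_0)\| = 1$, so that $x = \frac{1}{2}\big((x+w)+(x-w)\big)$ with $x \pm w \in S_X$, which is exactly what we want.

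The verifications of uniqueness of $\alpha(e)$ (via convexity) and of its continuity (via the subsequential-limit argument) are routine. The only genuine obstacle, and the precise spot where the hypothesis $\dim (X) \geq 2$ is used, is the need for a connected unit sphere $S_V$ containing an antipodal pair, on which the odd continuous function $g$ can be forced to vanish; this fails in dimension one, where $S_V = \{e,-e\}$ is disconnected and, consistently, $\frac{1}{2}$ is not a mean of two unit vectors of $\RR$.
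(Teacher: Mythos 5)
Your argument is correct, but it takes a genuinely different route from the paper's. The paper's proof is an asymmetric one-liner: for $0<\Vert x\Vert<1$ it considers the continuous map $y\mapsto \Vert 2x-y\Vert$ on the connected set $S_X$, observes that it takes the value $\vert 2\Vert x\Vert -1\vert<1$ at $y=x/\Vert x\Vert$ and the value $2\Vert x\Vert+1>1$ at $y=-x/\Vert x\Vert$, and so by the intermediate value theorem finds $y\in S_X$ with $\Vert 2x-y\Vert=1$, giving $x=\frac{1}{2}\bigl((2x-y)+y\bigr)$. You instead insist on a \emph{symmetric} decomposition $x=\frac{1}{2}\bigl((x+w)+(x-w)\bigr)$, which forces you to solve two equations at once and hence to invoke a Borsuk--Ulam-type argument: the odd continuous function $g(e)=\alpha(e)-\alpha(-e)$ on the connected sphere of a two-dimensional subspace must vanish. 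Both proofs hinge on the same essential ingredient --- connectedness of the unit sphere when $\dim(X)\geq 2$ --- and your supporting steps (uniqueness of $\alpha(e)$ by convexity, continuity by the subsequential-limit argument) are all sound. What your version buys is the slightly stronger conclusion that $x$ is the \emph{midpoint} of a chord of $S_X$ (and it handles $x=0$ without a separate case); what it costs is roughly a page of verification where the paper needs three lines. For the way the lemma is used later (Corollary \ref{Mena} only needs \emph{some} expression as a mean of two unit vectors), the extra symmetry is not exploited, so the paper's shorter argument suffices.
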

\begin{proof}
Let $x$ be in $X$ with $0<\Vert x\Vert <1$. Then
$$\mbox{$\left\Vert 2x-\frac{x}{\Vert x\Vert}\right\Vert <1$ \ and \ $\left\Vert 2x+\frac{x}{\Vert x\Vert}\right\Vert >1$.}$$
Therefore, since $S_X$ is connected, there exists $y\in S_X$ such that \linebreak $\Vert 2x-y\Vert =1$. Now $x=\frac{1}{2}(2x-y+y)$.
 \end{proof}

As a straightforward consequence of the above lemma, we derive the following.

\begin{corollary} \label{Mena}
Suppose that $\dim (X_\gamma )\geq 2$ for every $\gamma $. Then every  element in $z\in B_{Z_\infty}$ can be written as $z=\frac{1}{2}(x+y)$ with  $\|x(\gamma )\|_{X_\gamma } = 1=\|y(\gamma )\|_{X_\gamma }$ for all $\gamma  \in \Gamma$.
\end{corollary}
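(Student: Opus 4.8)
The plan is to apply the preceding lemma coordinatewise and then merely check that the two families so produced actually belong to $Z_\infty$. Fix $z=(z(\gamma))_{\gamma\in\Gamma}\in B_{Z_\infty}$. For each index $\gamma$ one has $z(\gamma)\in B_{X_\gamma}$, and by hypothesis $\dim(X_\gamma)\geq 2$; hence the previous lemma furnishes elements $x(\gamma),y(\gamma)\in S_{X_\gamma}$ with $z(\gamma)=\tfrac12\big(x(\gamma)+y(\gamma)\big)$. (For the coordinates where $z(\gamma)=0$ one takes $x(\gamma)=-y(\gamma)$ an arbitrary unit vector, and where $\|z(\gamma)\|=1$ one takes $x(\gamma)=y(\gamma)=z(\gamma)$; the lemma covers the remaining range $0<\|z(\gamma)\|<1$.) Making these selections simultaneously, set $x:=(x(\gamma))_{\gamma\in\Gamma}$ and $y:=(y(\gamma))_{\gamma\in\Gamma}$.

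It then remains to observe that $x$ and $y$ lie in $Z_\infty$: since $\|x(\gamma)\|_{X_\gamma}=\|y(\gamma)\|_{X_\gamma}=1$ for every $\gamma$, both families are bounded with $\sup_{\gamma}\|x(\gamma)\|_{X_\gamma}=\sup_{\gamma}\|y(\gamma)\|_{X_\gamma}=1$, so in fact $x,y\in S_{Z_\infty}\subseteq B_{Z_\infty}$. Finally $z=\tfrac12(x+y)$ because the identity holds in every coordinate, and by construction $\|x(\gamma)\|_{X_\gamma}=1=\|y(\gamma)\|_{X_\gamma}$ for all $\gamma$, which is exactly the assertion.

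There is essentially no obstacle here; the statement is, as indicated, a straightforward consequence of the lemma. The only two points worth keeping in mind are that $Z_\infty$ (and not $Z_0$) is the right ambient space, since the decomposing families have every coordinate of norm $1$ and hence do not vanish at infinity when $\Gamma$ is infinite, and that the dimension hypothesis $\dim(X_\gamma)\geq 2$ is used solely to invoke the lemma (it genuinely fails for one-dimensional $X_\gamma$, where a coordinate such as $z(\gamma)=\tfrac12$ is not a mean of two unit vectors).
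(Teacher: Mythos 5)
Your proof is correct and follows exactly the route the paper intends: the paper offers no written proof, simply declaring the corollary a straightforward consequence of the preceding lemma applied coordinatewise, which is precisely what you do (and your explicit handling of the boundary cases $z(\gamma)=0$ and $\|z(\gamma)\|=1$, together with the remark that the decomposition lands in $Z_\infty$ rather than $Z_0$, only makes the argument more complete than the paper's).
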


Now, combining Lemma \ref{extreme points} and Corollary \ref{Mena}, we obtain the following.


\begin{proposition}\label{convex-hull-extreme-point} Suppose that $\dim (X_\gamma )\geq 2$ and that $X_\gamma$ is strictly convex for every $\gamma \in \Gamma$. Then every  element in $B_{Z_\infty}$ admits a  expression as a mean of two elements in $Ext (B_{Z_\infty})$.
\end{proposition}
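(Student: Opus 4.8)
The plan is to combine the two results immediately preceding the Proposition. By Corollary \ref{Mena}, any $z \in B_{Z_\infty}$ can be written as $z = \frac{1}{2}(x+y)$ where $x, y \in Z_\infty$ satisfy $\|x(\gamma)\|_{X_\gamma} = 1 = \|y(\gamma)\|_{X_\gamma}$ for every $\gamma \in \Gamma$. Since each $X_\gamma$ is strictly convex, Lemma \ref{extreme points} tells us that the condition $\|p(\gamma)\|_{X_\gamma} = 1$ for all $\gamma$ is precisely the characterization of membership in $Ext(B_{Z_\infty})$. Hence both $x$ and $y$ lie in $Ext(B_{Z_\infty})$, and $z = \frac{1}{2}(x+y)$ exhibits $z$ as a mean of two extreme points.

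The only subtlety to check is that $x$ and $y$, as produced by Corollary \ref{Mena}, genuinely belong to $B_{Z_\infty}$ (equivalently $S_{Z_\infty}$), so that applying Lemma \ref{extreme points} is legitimate. But $\|x\|_{Z_\infty} = \sup_{\gamma} \|x(\gamma)\|_{X_\gamma} = 1$ since each coordinate has norm exactly $1$, and likewise for $y$; in particular $x, y \in Z_\infty$ (the supremum is finite) and they lie on the unit sphere. So the hypotheses of Lemma \ref{extreme points} are met.

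There is no real obstacle here: the Proposition is essentially the conjunction of the preceding Corollary (which supplies the decomposition with all coordinates of norm one) and the preceding Lemma (which identifies such points as exactly the extreme points under strict convexity). If one wished to be slightly more careful, one would note that Corollary \ref{Mena} only requires $\dim(X_\gamma) \geq 2$ while Lemma \ref{extreme points} only requires strict convexity, so both hypotheses of the Proposition are used, one for each ingredient. The proof is therefore a two-line citation of the two previous statements.
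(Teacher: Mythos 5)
Your proof is correct and is precisely the paper's argument: the paper introduces the Proposition with the phrase ``combining Lemma \ref{extreme points} and Corollary \ref{Mena}, we obtain the following,'' which is exactly the two-step citation you give (the Corollary supplies the decomposition with all coordinates of norm one, and the Lemma identifies such points as the extreme points under strict convexity). Your extra check that $x$ and $y$ lie on the unit sphere is harmless and accurate.
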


\begin{proposition}\label{Strong Mankiewicz property} Suppose that $\dim (X_\gamma )\geq 2$ and that $X_\gamma$ is strictly convex for every $\gamma \in \Gamma$. Then every convex body in $Z_\infty$ satisfies the strong  Mankiewicz property.
	
\end{proposition}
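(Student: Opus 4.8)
The plan is to deduce this immediately from the Mori--Ozawa criterion (Theorem \ref{Mori-Ozawa}) together with Proposition \ref{convex-hull-extreme-point}. Recall that Theorem \ref{Mori-Ozawa} asserts that a convex body in a Banach space $X$ has the strong Mankiewicz property as soon as the closed convex hull of $Ext(B_X)$ has non-empty interior in $X$. So the only thing I really need to check is that $\overline{\mathrm{co}}\,(Ext(B_{Z_\infty}))$ has non-empty interior in $Z_\infty$.

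First I would invoke Proposition \ref{convex-hull-extreme-point}: under the standing hypotheses (each $X_\gamma$ strictly convex with $\dim(X_\gamma)\geq 2$), every element of $B_{Z_\infty}$ is a mean of two points of $Ext(B_{Z_\infty})$, hence lies in $\mathrm{co}\,(Ext(B_{Z_\infty}))\subseteq \overline{\mathrm{co}}\,(Ext(B_{Z_\infty}))$. Therefore $B_{Z_\infty}=\overline{\mathrm{co}}\,(Ext(B_{Z_\infty}))$, and in particular this set contains the open unit ball of $Z_\infty$, which is a non-empty open subset of $Z_\infty$. Thus the interior of $\overline{\mathrm{co}}\,(Ext(B_{Z_\infty}))$ is non-empty.

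With this in hand, Theorem \ref{Mori-Ozawa} applies verbatim with $X=Z_\infty$: every convex body $K\subseteq Z_\infty$ has the strong Mankiewicz property, which is exactly the assertion. I do not foresee any genuine obstacle here; the content has already been extracted into Proposition \ref{convex-hull-extreme-point}, and the remaining argument is just the observation that ``mean of two extreme points for every ball element'' forces the ball itself to be the closed convex hull of the extreme points, so the interior is automatically non-empty. The only point worth stating carefully is that one is allowed to take $X=Z_\infty$ in Theorem \ref{Mori-Ozawa}, i.e.\ that $Z_\infty$ is a Banach space, which is clear since an $\ell_\infty$-sum of Banach spaces is complete.
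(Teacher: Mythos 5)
Your argument is correct and is essentially identical to the paper's own proof: both deduce from Proposition \ref{convex-hull-extreme-point} that $B_{Z_\infty}$ is the convex hull of its extreme points (so the closed convex hull of $Ext(B_{Z_\infty})$ has non-empty interior) and then apply Theorem \ref{Mori-Ozawa}. The extra details you spell out (the open unit ball lying inside the hull, completeness of the $\ell_\infty$-sum) are fine but not needed beyond what the paper states.
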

\begin{proof} By Proposition \ref{convex-hull-extreme-point}, the closed unit ball of $Z_\infty$ is the convex hull of its extreme points, and hence the result follows from the Mori--Ozawa Theorem \ref{Mori-Ozawa}.
\end{proof}

For each $\gamma _0\in \Gamma$ and each $x_{\gamma _0}\in S_{X_{\gamma _0 }}$ we set $$A(\gamma _0,x_{\gamma _0}):=\{ z\in S_{Z} : z(\gamma _0) = x_{\gamma _0}\}.$$

\begin{lemma}\label{maximal}  Suppose that  $X_\gamma$ is strictly convex for every $\gamma \in \Gamma$. Then, for $\gamma _0\in \Gamma$ and  $x_{\gamma _0}\in S_{X_{\gamma _0 }}$,  $A(\gamma _0,x_{\gamma _0})$ is a  maximal norm-closed proper face of $B_{Z}$, equivalently, a maximal convex subset of $S_{Z}$.
\end{lemma}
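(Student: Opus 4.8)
The plan is to show that $A(\gamma_0,x_{\gamma_0})$ is a norm-closed proper face of $B_Z$, that it is convex and maximal among convex subsets of $S_Z$, and that these two notions coincide here. First I would check directly that $F := A(\gamma_0,x_{\gamma_0})$ is a face: it is clearly norm-closed (the evaluation $z \mapsto z(\gamma_0)$ is continuous) and proper (nonempty, since $\dim(X_\gamma)\ge 2$ lets us choose norm-one coordinates, and not all of $B_Z$). If $z = \frac12(u+v)$ with $u,v \in B_Z$ and $z \in F$, then $x_{\gamma_0} = z(\gamma_0) = \frac12(u(\gamma_0)+v(\gamma_0))$ with $\|u(\gamma_0)\|, \|v(\gamma_0)\| \le 1$; since $X_{\gamma_0}$ is strictly convex, $x_{\gamma_0} \in S_{X_{\gamma_0}}$ is an extreme point of $B_{X_{\gamma_0}}$, forcing $u(\gamma_0) = v(\gamma_0) = x_{\gamma_0}$, i.e. $u,v \in F$. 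Hence $F$ is a face, and being a face contained in $S_Z$ it is automatically convex; conversely any convex subset of $S_Z$ is contained in a face of $B_Z$ (take the face generated by it, which stays inside $S_Z$ because points of $S_Z$ are not proper convex combinations of points of $B_Z$ lying off the line — actually one uses that a convex subset of the sphere generates a face still lying in the sphere). So "maximal norm-closed proper face" and "maximal convex subset of $S_Z$" will coincide, and it suffices to prove maximality in one of the two senses.

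Next I would prove maximality. Suppose $C$ is a convex subset of $S_Z$ with $F \subseteq C$; I must show $C = F$. Pick any $z \in C$. The goal is $z(\gamma_0) = x_{\gamma_0}$. Fix an arbitrary $w \in F$ that differs from $z$ only in the $\gamma_0$-coordinate, or more carefully: using Corollary \ref{Mena}-type reasoning, choose $w \in F$ with $w(\gamma) = z(\gamma)$ for all $\gamma \neq \gamma_0$ and $w(\gamma_0) = x_{\gamma_0}$ (this lies in $S_Z$ because the $\ell_\infty$/$c_0$ norm only sees the sup over coordinates and $\|x_{\gamma_0}\| = 1$; for $Z_0$ one checks the tail still vanishes). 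Then $\frac12(z + w) \in C \subseteq S_Z$, so $\|\frac12(z+w)\| = 1$. But in coordinate $\gamma_0$ we have $\frac12(z(\gamma_0) + x_{\gamma_0})$, and in every other coordinate the value is $z(\gamma)$, unchanged. Since $z \in S_Z$, there is a sequence of coordinates (or a coordinate, in the $\ell_\infty$ case one must be a bit more careful and argue with suprema) along which $\|z(\gamma)\| \to 1$. The key point is to force $\|z(\gamma_0)\| = 1$ first, and then use strict convexity of $X_{\gamma_0}$ together with $\|\frac12(z(\gamma_0)+x_{\gamma_0})\| \le 1$ to conclude $z(\gamma_0) = x_{\gamma_0}$.

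To force $\|z(\gamma_0)\| = 1$, I would suppose $\|z(\gamma_0)\| = r < 1$ and derive a contradiction with maximality of $C$ as a convex subset of $S_Z$: one can then perturb the $\gamma_0$-coordinate of $z$ slightly (replacing $z(\gamma_0)$ by $z(\gamma_0) + \varepsilon v$ and $z(\gamma_0) - \varepsilon v$ for suitable small $\varepsilon$ and a direction $v$, which is possible since $\dim(X_{\gamma_0}) \ge 2$ gives room) to exhibit two distinct sphere elements whose midpoint is $z$, all lying in a convex set strictly larger than $C$ but still inside $S_Z$ — contradicting that $C$ could not be enlarged, or more directly showing $F \not\subseteq C$ unless $C$ already contains such perturbations, which then cannot all have $\gamma_0$-coordinate equal to $x_{\gamma_0}$. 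The cleanest route is probably: for the face statement, a maximal proper face of $B_Z$ must have the form "$\varphi$ attains its norm" for some $\varphi \in S_{Z^*}$, and for $Z = Z_0$ or $Z_\infty$ the norm-attaining functionals and their faces are readily identified with the $A(\gamma_0,x_{\gamma_0})$'s using strict convexity of each $X_\gamma$; then maximality is inherited from maximality of the corresponding face in the one-dimensional quotient. The main obstacle I anticipate is the bookkeeping in the $Z_\infty$ case: unlike $c_0$-sums, an element of $S_{Z_\infty}$ need not attain its norm at any single coordinate, so the argument that a maximal face "localizes" at one coordinate $\gamma_0$ requires care — one likely needs strict convexity of every $X_\gamma$ precisely to rule out faces supported on more than one coordinate, and to handle the non-norm-attaining functionals on $\ell_\infty$-sums one may have to argue that any maximal face is nonetheless of the stated form by a separation/extremality argument rather than by exhibiting a supporting functional directly.
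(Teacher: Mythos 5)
The first half of your argument --- that $A(\gamma_0,x_{\gamma_0})$ is a norm-closed face of $B_Z$, via extremality of $x_{\gamma_0}$ in $B_{X_{\gamma_0}}$ --- is fine. The maximality argument, which is the actual content of the lemma, is not completed, and you acknowledge as much at the end. The step that fails is your choice of test element: you compare $z\in C$ with a $w\in A(\gamma_0,x_{\gamma_0})$ that \emph{agrees with $z$ at every coordinate other than $\gamma_0$}. With that choice the midpoint $\tfrac12(z+w)$ has $\gamma$-coordinate $z(\gamma)$ for all $\gamma\neq\gamma_0$, so its norm can be carried (or approached) entirely away from $\gamma_0$, and the condition $\tfrac12(z+w)\in S_Z$ gives no information whatsoever about $z(\gamma_0)$. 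Your proposed repairs --- forcing $\Vert z(\gamma_0)\Vert=1$ by perturbing the $\gamma_0$-coordinate, or classifying all maximal faces of $Z_\infty$ while worrying about non-norm-attaining functionals --- are left as sketches with obstacles you yourself flag, so the proof does not close.

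The missing idea is to test against the opposite kind of element: let $z_0$ be the element of $A(\gamma_0,x_{\gamma_0})$ with $z_0(\gamma_0)=x_{\gamma_0}$ and $z_0(\gamma)=0$ for all $\gamma\neq\gamma_0$. If $C$ is a convex subset of $S_Z$ containing $A(\gamma_0,x_{\gamma_0})$ and $z\in C$, then $\tfrac12(z+z_0)\in C\subseteq S_Z$, so $\Vert z+z_0\Vert=2$; but $\Vert (z+z_0)(\gamma)\Vert=\Vert z(\gamma)\Vert\le 1$ for every $\gamma\neq\gamma_0$, so the supremum defining $\Vert z+z_0\Vert$ can only reach $2$ at the coordinate $\gamma_0$, forcing $\Vert z(\gamma_0)+x_{\gamma_0}\Vert_{X_{\gamma_0}}=2$, and strict convexity of $X_{\gamma_0}$ then yields $z(\gamma_0)=x_{\gamma_0}$, i.e.\ $z\in A(\gamma_0,x_{\gamma_0})$. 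This localization at $\gamma_0$ works identically for $Z_0$ and $Z_\infty$ and makes the norm-attainment worries moot. The paper runs essentially this computation after first invoking Zorn's lemma and Tanaka's description of maximal convex subsets of the sphere as sets of the form $\{z\in B_Z: z^*(z)=1\}$ (which is the rigorous version of the ``supporting functional'' route you gesture at); in either formulation the decisive point is the choice of the test element supported only at $\gamma_0$, which is exactly what your argument lacks.
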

\begin{proof} 	We define $z_0\in S_{Z}$  by $z_0(\gamma ):=0$ if $\gamma \neq \gamma _0$ and $z_0(\gamma _0):=x_{\gamma _0}$. Since $A(\gamma _0,x_{\gamma _0})$ is a convex subset of $S_{Z}$, by Zorn's lemma, there exists a maximal convex subset $C$ of $S_{Z}$ that contains $A(\gamma _0,x_{\gamma _0})$. By \cite[Lemma 3.1]{Tan2016preprint}, there exists $z^*\in S_{Z^*}$ such that $C=\{z\in B_{Z}:z^*(z)=1\}$. Given $z\in C$ we have that $z^*(z+z_0)=2=\Vert z+z_0\Vert $. This implies that $\Vert z(\gamma _0)+x_{\gamma _0}\Vert _{X_{\gamma _0}}=2$, and hence $z(\gamma _0)=x_{\gamma _0}$ because $X_{\gamma_0}$ is strictly convex. We conclude that $C=A(\gamma _0,x_{\gamma _0})$.\end{proof}


 The proof of the next result is an adaptation of  \cite[Lemma 2.1]{JVMorPeRa2017}.

\begin{lemma}\label{l existence of support functionals for the image of a face} Suppose that  $X_\gamma$ is strictly convex for every $\gamma \in \Gamma$. Let $Y$ be a  Banach space, and let $\Delta : S_Z\to S_Y$ be a surjective isometry. Then, for  $\gamma _0\in \Gamma$ and  $x_{\gamma _0}\in S_{X_{\gamma _0}}$, the set $${\rm supp}(\gamma _0,x_{\gamma _0}) := \{\psi\in S_{Y^*} : \  \psi^{-1} (\{1\})\cap B_{Y} = \Delta(A(\gamma _0,x_{\gamma _0})) \}$$ is a non-empty weak$^*$-closed face of $B_{Y^*}$.
\end{lemma}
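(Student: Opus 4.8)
The plan is to transfer the geometric structure of the maximal face $A(\gamma_0, x_{\gamma_0})$ of $B_Z$ to the image side through the surjective isometry $\Delta$, following the template of \cite[Lemma 2.1]{JVMorPeRa2017}. First I would record that, by Lemma \ref{maximal}, $A(\gamma_0, x_{\gamma_0})$ is a maximal convex subset of $S_Z$, hence a maximal proper norm-closed face of $B_Z$; its antipodal set $-A(\gamma_0, x_{\gamma_0}) = A(\gamma_0, -x_{\gamma_0})$ is another such face, and the two are at distance $2$ (for $z \in A(\gamma_0,x_{\gamma_0})$, $w \in A(\gamma_0,-x_{\gamma_0})$ we have $\|z - w\| \geq \|z(\gamma_0) - w(\gamma_0)\| = \|2 x_{\gamma_0}\| = 2$). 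Since $\Delta$ is a surjective isometry of spheres it preserves this diametral structure; the Mazur--Ulam-type machinery (the fact, established in the cited references, that a surjective isometry between unit spheres maps maximal proper faces to maximal proper faces and is affine when restricted to such a face together with its antipode) yields that $\Delta(A(\gamma_0, x_{\gamma_0}))$ is a maximal norm-closed proper face of $B_Y$ and that $\Delta$ restricted to $A(\gamma_0,x_{\gamma_0}) \cup A(\gamma_0,-x_{\gamma_0})$ is the restriction of a real-affine map sending one face onto $\Delta(A(\gamma_0,x_{\gamma_0}))$ and the other onto its antipode.

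Next I would invoke the standard duality description of maximal norm-closed proper faces: a convex set $\mathcal F \subseteq B_Y$ is a maximal norm-closed proper face if and only if there exists $\psi \in S_{Y^*}$ with $\mathcal F = \psi^{-1}(\{1\}) \cap B_Y$ (this is precisely the analogue of \cite[Lemma 3.1]{Tan2016preprint} used above, now applied in $Y$). Applying this to $\mathcal F = \Delta(A(\gamma_0, x_{\gamma_0}))$ produces at least one $\psi \in S_{Y^*}$ with $\psi^{-1}(\{1\}) \cap B_Y = \Delta(A(\gamma_0,x_{\gamma_0}))$, so ${\rm supp}(\gamma_0, x_{\gamma_0})$ is non-empty. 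For the weak$^*$-closedness: if $\psi_\lambda \to \psi$ weak$^*$ with each $\psi_\lambda \in {\rm supp}(\gamma_0,x_{\gamma_0})$, then for every $z \in A(\gamma_0,x_{\gamma_0})$ we get $\psi(\Delta(z)) = \lim_\lambda \psi_\lambda(\Delta(z)) = 1$, and since $\Delta(A(\gamma_0,x_{\gamma_0}))$ spans a set whose closed convex hull contains a point where $\psi$ is evaluated, $\|\psi\| \geq 1$; combined with weak$^*$-lower semicontinuity of the norm giving $\|\psi\| \leq 1$, we get $\psi \in S_{Y^*}$ and $\Delta(A(\gamma_0,x_{\gamma_0})) \subseteq \psi^{-1}(\{1\}) \cap B_Y$. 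Maximality of $\Delta(A(\gamma_0,x_{\gamma_0}))$ as a proper face then forces equality, so $\psi \in {\rm supp}(\gamma_0,x_{\gamma_0})$.

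Finally, that ${\rm supp}(\gamma_0, x_{\gamma_0})$ is a face of $B_{Y^*}$: suppose $\psi = \tfrac12(\varphi_1 + \varphi_2)$ with $\varphi_1, \varphi_2 \in B_{Y^*}$ and $\psi \in {\rm supp}(\gamma_0,x_{\gamma_0})$. For each $y \in \Delta(A(\gamma_0,x_{\gamma_0}))$ one has $\tfrac12(\varphi_1(y) + \varphi_2(y)) = 1$ with $\varphi_i(y) \leq 1$, forcing $\varphi_1(y) = \varphi_2(y) = 1$; hence $\Delta(A(\gamma_0,x_{\gamma_0})) \subseteq \varphi_i^{-1}(\{1\}) \cap B_Y$ for $i=1,2$, and in particular $\varphi_i$ attains norm $1$, so $\varphi_i \in S_{Y^*}$. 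Since $\Delta(A(\gamma_0,x_{\gamma_0}))$ is a maximal proper face and is contained in the proper face $\varphi_i^{-1}(\{1\}) \cap B_Y$, these coincide, giving $\varphi_i \in {\rm supp}(\gamma_0,x_{\gamma_0})$. The main obstacle I anticipate is the first step: cleanly extracting from the cited Mazur--Ulam-type results the precise statement that $\Delta$ carries the maximal face $A(\gamma_0,x_{\gamma_0})$ onto a maximal norm-closed proper face of $B_Y$ — this is where one must use strict convexity of the $X_\gamma$ (via Lemma \ref{maximal}) together with the antipodal/diametral preservation, rather than any smoothness; the remaining duality and face arguments are routine.
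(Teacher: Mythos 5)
Your proposal is correct and follows essentially the same route as the paper: Lemma \ref{maximal} gives that $A(\gamma_0,x_{\gamma_0})$ is a maximal convex subset of $S_Z$, the Tanaka-type results (\cite[Lemma 3.5]{Tan2014} and \cite[Lemma 3.1]{Tan2016preprint}) give that its image is a maximal convex subset of $S_Y$ supported by some $\psi\in S_{Y^*}$, and weak$^*$-closedness (and the face property, which the paper leaves implicit) follow from maximality of $\Delta(A(\gamma_0,x_{\gamma_0}))$ exactly as you argue. The only caveat is your parenthetical claim that $\Delta$ restricted to the face and its antipode is already known to be affine --- that is not established at this stage of the paper and is not needed for, nor used in, the rest of your argument.
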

\begin{proof} By Lemma \ref{maximal}, the set $A(\gamma _0,x_{\gamma _0})$ is a maximal convex subset of $B_Z$. Then, by \cite[Lemma 3.5]{Tan2014}, $\Delta (A(\gamma _0,x_{\gamma _0}))$ is a maximal convex subset of $B_Y$, and  therefore, by \cite[Lemma 3.1]{Tan2016preprint}, there exists $\psi \in S_{Y^*}$ such that $\Delta (A(\gamma _0,x_{\gamma _0}))=\{y\in B_{Y}:\psi (y)=1 \}$. It is clear that given $\psi ' \in \overline{{\rm supp}(\gamma _0,x_{\gamma _0}) }^{w^*}$, we have $\Delta (A(\gamma _0,x_{\gamma _0}))\subseteq \{y\in B_{Y}:\psi ' (y)=1 \} $. Since  $\Delta (A(\gamma _0,x_{\gamma _0}))$ is a maximal convex subset of $B_Y$, $\psi '\in {\rm supp}(\gamma _0,x_{\gamma _0})$.
\end{proof}

Given a norm-one element $x$ in a Banach space $X$, the star of $x$ with respect to $S_X$, $St(x)$, is defined by
$$St(x):=\{ y\in S_X:\Vert x+y\Vert =2 \}.$$ It is known that  $St(x)$ is precisely the union of all maximal convex subsets of $S_X$ containing $x$. The arguments in the proofs of \cite[Lemmas 2.2 and 2.3, and Proposition 2.4]{JVMorPeRa2017}  allow us  to obtain the following result.

\begin{lemma}\label{l tech 2} Suppose that  $X_\gamma$ is strictly convex for every $\gamma \in \Gamma$. Let $Y$ be a Banach space and let $\Delta : S_{Z}\to S_Y$ be a surjective isometry. Pick $\gamma _0\in \Gamma $, $x_{\gamma _0}\in S_{X_{\gamma _0}}$ and define $z_0\in S_Z$  by $z_0(\gamma):=x_{\gamma _0}$ if $\gamma =\gamma_0$ and $z_0(\gamma):=0$ if $\gamma \neq \gamma_0$.   Then the following assertions hold:	
\begin{enumerate}
\item $St(\Delta (z_0))=\Delta (A(\gamma _0,x_{\gamma _0}))$.
\item $\psi \Delta(z) =-1$ for all $z\in A(\gamma _0,-x_{\gamma _0})$ and $\psi \in {\rm supp}(\gamma _0,x_{\gamma _0})$.
\item   $\Delta (-A(\gamma _0,x_{\gamma _0}))=-\Delta (A(\gamma _0,x_{\gamma _0}))$.
\item ${\rm supp}(\gamma _0,x_{\gamma _0})\cap {\rm supp}(\gamma ,x)=\varnothing$, for all  $\gamma \neq \gamma_0$ and $x\in S_{X_{\gamma }}$.
\item ${\rm supp}(\gamma _0,x_{\gamma _0})\cap {\rm supp}(\gamma _0,x')=\varnothing$, for every $ x'\in S_{X_{\gamma_0}}$ with \linebreak $x'\neq x_{\gamma _0}$.
\item  If  $\psi \in {\rm supp}(\gamma _0,x_{\gamma _0}),$ and  if $z\in S_{Z}$ with $z(\gamma _0)=0$, then $\psi \Delta(z) =0$. Furthermore, $\vert \psi (\Delta (z))\vert <1$, for all $\psi \in {\rm supp}(\gamma_0 ,x_0)$ and $z\in S_Z$ with $\Vert z(\gamma_0)\Vert <1$.
\item  If $z$ is in $S_Z$ such that $\psi \Delta(z) =0$ for all    $x\in S_{X_{\gamma_0 }}$ and $\psi \in {\rm supp}(\gamma_0 ,x)$, then $z(\gamma _0)  =0$.
\end{enumerate}
\end{lemma}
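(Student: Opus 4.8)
The plan is to prove the seven assertions in the order listed, channelling almost everything through two facts already available: Lemma~\ref{maximal} together with \cite[Lemma 3.5]{Tan2014} (used in the proof of Lemma~\ref{l existence of support functionals for the image of a face}), which says that a surjective isometry between unit spheres carries maximal convex subsets of the sphere onto maximal convex subsets; and the description of $St(\cdot)$ recalled above, namely that $St(u)$ is the union of all maximal convex subsets of the sphere containing $u$.

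For (1) I would first check, by a short computation with the supremum norm, that $St(z_0)=A(\gamma_0,x_{\gamma_0})$: if $w\in S_Z$ satisfies $\|w+z_0\|=2$ then, since $\|(w+z_0)(\gamma)\|=\|w(\gamma)\|\le 1$ for every $\gamma\ne\gamma_0$, we must have $\|w(\gamma_0)+x_{\gamma_0}\|_{X_{\gamma_0}}=2$, and strict convexity of $X_{\gamma_0}$ then forces $w(\gamma_0)=x_{\gamma_0}$; the reverse inclusion is trivial. By Lemma~\ref{maximal}, $St(z_0)$ is thus itself a single maximal convex subset of $S_Z$, so $z_0$ lies in a unique maximal convex subset of $S_Z$. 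Transporting this along $\Delta$ and $\Delta^{-1}$ (both surjective isometries of spheres) shows that $\Delta(z_0)$ lies in a unique maximal convex subset of $S_Y$, necessarily $\Delta(A(\gamma_0,x_{\gamma_0}))$; the union description of $St(\Delta(z_0))$ then gives $St(\Delta(z_0))=\Delta(A(\gamma_0,x_{\gamma_0}))$.

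I expect (3) to be the crux. Applying (1) to $-z_0$ yields $St(\Delta(-z_0))=\Delta(A(\gamma_0,-x_{\gamma_0}))=\Delta\bigl(-A(\gamma_0,x_{\gamma_0})\bigr)$, a maximal convex subset of $S_Y$; so it suffices to show it equals $-\Delta(A(\gamma_0,x_{\gamma_0}))$, which is also a maximal convex subset. Since $\Delta$ is an isometry, $\|\Delta(-z_0)-\Delta(z_0)\|=\|2z_0\|=2$, hence $-\Delta(z_0)\in St(\Delta(-z_0))$. By the uniqueness established above, $-\Delta(z_0)$ lies in a unique maximal convex subset of $S_Y$, namely $-\Delta(A(\gamma_0,x_{\gamma_0}))$; as $St(\Delta(-z_0))$ is a maximal convex subset containing $-\Delta(z_0)$, the two coincide, proving (3). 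Then (2) is immediate: for $z\in A(\gamma_0,-x_{\gamma_0})$ and $\psi\in{\rm supp}(\gamma_0,x_{\gamma_0})$ one has $\Delta(z)\in\Delta\bigl(-A(\gamma_0,x_{\gamma_0})\bigr)=-\bigl(\psi^{-1}(\{1\})\cap B_Y\bigr)=\psi^{-1}(\{-1\})\cap B_Y$.

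The remaining items are short. For (4) and (5): a common $\psi\in{\rm supp}(\gamma_0,x_{\gamma_0})\cap{\rm supp}(\gamma,x)$ would give $\Delta(A(\gamma_0,x_{\gamma_0}))=\psi^{-1}(\{1\})\cap B_Y=\Delta(A(\gamma,x))$, hence $A(\gamma_0,x_{\gamma_0})=A(\gamma,x)$ by injectivity of $\Delta$, which fails whenever $(\gamma,x)\ne(\gamma_0,x_{\gamma_0})$ (for instance $z_0$ lies in the first set but not the second). For the first half of (6): if $z(\gamma_0)=0$ a direct norm computation gives $\|z-z_0\|=\|z+z_0\|=1$, so $\|\Delta(z)-\Delta(z_0)\|=\|\Delta(z)-\Delta(-z_0)\|=1$; combining $\psi\Delta(z_0)=1$ with the first equality yields $\psi\Delta(z)\ge 0$, and combining $\psi\Delta(-z_0)=-1$ (by (2)) with the second yields $\psi\Delta(z)\le 0$. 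For the second half: if $\|z(\gamma_0)\|<1$ then $z\notin A(\gamma_0,\pm x_{\gamma_0})$, so $\Delta(z)$ avoids $\psi^{-1}(\{1\})\cap B_Y$ and, using (3), also $\psi^{-1}(\{-1\})\cap B_Y$, whence $|\psi\Delta(z)|<1$. Finally for (7): if $z(\gamma_0)\ne 0$, set $x:=z(\gamma_0)/\|z(\gamma_0)\|\in S_{X_{\gamma_0}}$ and let $w\in S_Z$ coincide with $z$ off $\gamma_0$ and satisfy $w(\gamma_0)=x$; then $w\in A(\gamma_0,x)$ and $\|z-w\|=1-\|z(\gamma_0)\|$, so for any $\psi\in{\rm supp}(\gamma_0,x)$ we get $\psi\Delta(z)\ge\psi\Delta(w)-\|\Delta(z)-\Delta(w)\|=\|z(\gamma_0)\|>0$, contradicting the hypothesis of (7). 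The only genuine difficulty lies in the bootstrap of (1) and (3); once maximal convex subsets are known to be preserved and the single-coordinate stars are pinned down, the rest is bookkeeping, and nothing here distinguishes $Z_0$ from $Z_\infty$.
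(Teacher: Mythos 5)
Your proof is correct, and it is worth noting that the paper itself gives no proof of this lemma: it merely points to the arguments of \cite[Lemmas 2.2 and 2.3, and Proposition 2.4]{JVMorPeRa2017}, whose template is visible in the paper's fully written proof of the $C_0(K,X)$ analogue (Lemma \ref{l tech 2-bis}). The routes differ in a genuine way. The paper's template establishes the analogue of (2) first, by producing an auxiliary element $h$ with $\Delta(h)=-\Delta(f)$ and showing $h$ belongs to the relevant face via \cite[Corollary 2.2]{FangWang06} (the preservation of the relation $\Vert u+v\Vert=2$ under $\Delta$), and only then deduces (3). You instead pivot everything on the observation that $St(z_0)=A(\gamma_0,x_{\gamma_0})$ is itself a single maximal convex subset, so that $z_0$ -- and hence, by Tanaka's preservation of maximal convex sets under $\Delta$ and $\Delta^{-1}$, also $\Delta(z_0)$ and $-\Delta(z_0)$ -- lies in a unique maximal face; this pins down $St(\Delta(\pm z_0))$ and yields (1) and (3) at once, with (2) falling out as $\psi^{-1}(\{-1\})\cap B_Y=-\Delta(A(\gamma_0,x_{\gamma_0}))$. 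Your argument avoids Fang--Wang's lemma entirely (trading it for the star-as-union-of-maximal-faces description plus Tanaka's lemma, both already invoked elsewhere in Section 2), is equally valid in the $Z_0$ and $Z_\infty$ cases since only sup-norm coordinate computations are used, and your treatments of (4)--(7) -- injectivity of $\Delta$ on faces, the norm identities $\Vert z\pm z_0\Vert=1$, and the perturbation $w$ with $\Vert z-w\Vert=1-\Vert z(\gamma_0)\Vert$ -- all check out. The one point worth making explicit if you write this up is that the map $y\mapsto -y$ carries maximal convex subsets of $S_Y$ to maximal convex subsets, which is what legitimises the claim that $-\Delta(A(\gamma_0,x_{\gamma_0}))$ is the unique maximal face containing $-\Delta(z_0)$; as stated this is implicit but easily supplied.
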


\medskip

By  \cite[Lemma 1.4]{AronLohm}, if $p\in Ext(B_{C(K,X)})$, then $\|p(t)\| = 1$  for all $t \in K$. The reciprocal implication is not true in general, however by \cite[Remark 1.5]{AronLohm}, if $X$ is a strictly convex Banach space, then  \begin{equation}\label{eq extreme points conts functions} \hbox{$p\in Ext (B_{C(K,X)})$ if and only if $\|p(t)\| = 1$ for all $t \in K$}.
\end{equation} \smallskip

Given a compact Hausdorff space $K$, we denote by dim$(K)$ the \emph{covering dimension} of $K$ \cite[page 385]{Engelking89}. We recall that a space $K$ has dim$(K)=0$ if and only if each point of $K$ has a neighborhoods base consisting of clopen sets \cite[Definition 29.4]{Willard}. We shall simply observe that, if $K$ is a totally disconnected compact Hausdorff  space, then  dim$(K)=0$ \cite[Theorem 29.7, page 211]{Willard}.

\begin{proposition}\label{C(K,X)-Strong Mankiewicz property} Let $X$ be a strictly convex Banach space and let $K$ be a compact Hausdorff space satisfying one of the following conditions:\begin{enumerate}[$(1)$]\item $X$ is infinite dimensional.
		\item  $X$ is $n$-dimensional with $n\geq 2$ {\rm(}$n\in \mathbb{N}${\rm)} and  dim$(K)\leq n-1$.
	\end{enumerate} Then $B_{C(K, X)}$ is the convex hull of its extreme points, and hence every convex body in $C(K,X)$ satisfies the strong Mankiewicz property.
\end{proposition}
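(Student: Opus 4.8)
The plan is to reduce both cases to showing that $B_{C(K,X)}$ equals the convex hull of its extreme points, since the ``strong Mankiewicz'' conclusion then follows immediately from Theorem \ref{Mori-Ozawa} (the closed convex hull of $Ext(B_{C(K,X)})$ contains the open unit ball, hence has non-empty interior). By \eqref{eq extreme points conts functions}, the extreme points of $B_{C(K,X)}$ are exactly the functions $p\in C(K,X)$ with $\|p(t)\|=1$ for every $t\in K$; so the task is to express an arbitrary $f\in B_{C(K,X)}$ as a finite convex combination of norm-one-valued continuous functions. I would first reduce to the case $\|f(t)\|<1$ for all $t$: if $f$ already has $\|f(t_0)\|=1$ at some points, one can either handle it directly or approximate; the genuinely substantive content is perturbing $f$ slightly so that it stays strictly inside the ball at every point and then writing that as a mean of two sphere-valued functions, so I would treat $\sup_t\|f(t)\|<1$ as the main case and recover the general case by a limiting/convexity argument.

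For the core construction I would split according to the two hypotheses. In case (1), $X$ is infinite dimensional: here one can fix, for each point, a suitable ``escape direction'' and use a partition-of-unity argument. More precisely, given $f$ with $\sup_t\|f(t)\| = r < 1$, cover $K$ by finitely many open sets $U_1,\dots,U_m$ on each of which $f$ is nearly constant (equal to some $x_i$ up to a small error), and on each $U_i$ choose $y_i, z_i\in S_X$ with $x_i = \tfrac12(y_i+z_i)$ using the folklore Lemma for $\dim(X)\geq 2$ (indeed $X$ infinite dimensional). Gluing these with a continuous partition of unity $\{\varphi_i\}$ subordinate to the cover produces $g:=\sum_i\varphi_i y_i$ and $h:=\sum_i\varphi_i z_i$ with $\tfrac12(g+h)$ close to $f$; the difficulty is that $g,h$ need not be sphere-valued. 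To fix this one retracts pointwise onto $S_X$ — in infinite dimensions $S_X$ is a Lipschitz (even smooth, by Bessaga--Klee type results, but Lipschitz retract suffices) retract of a neighbourhood, or more elementarily one arranges $x_i$ to lie in a fixed infinite-dimensional subspace so that the sphere there is contractible and one can interpolate without leaving it. This retraction step is where case (1) genuinely uses infinite-dimensionality, and I expect it to be the main obstacle: one must keep the retracted functions continuous and norm-one valued while controlling the error.

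In case (2), $X$ is $n$-dimensional with $n\geq 2$ and $\dim(K)\leq n-1$: here I would exploit dimension theory directly. The obstruction to writing $f:K\to B_X$ (with values strictly inside) as a mean of two maps into $S_X=S^{n-1}$ is essentially a lifting/selection problem whose obstruction lives in cohomology of $K$ in degrees $\geq n-1$; the hypothesis $\dim(K)\le n-1$, together with the fact that we have freedom of a whole fibre ($S^{n-1}$ is $(n-2)$-connected), lets one produce a continuous selection. Concretely, for $f$ with $\sup_t\|f(t)\|<1$ one wants a continuous $u:K\to S_X$ with $\|2f(t)-u(t)\|=1$ for all $t$ (mimicking the one-point folklore lemma), i.e.\ a continuous selection of the set-valued map $t\mapsto S_X\cap S(2f(t),1)$, which has $(n-2)$-connected values (it is a sphere $S^{n-2}$ for each $t$ since the two spheres meet transversally, using strict convexity of $X$ and $\|f(t)\|<1$); the Michael/dimension-theoretic selection theorem then applies precisely because $\dim K\le n-1$. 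Then $f=\tfrac12\big((2f-u)+u\big)$ exhibits $f$ as a mean of two sphere-valued continuous functions. The hard part in case (2) is verifying the connectivity of the fibres and invoking the correct selection theorem; once that is in place, combining the two cases and the reduction to $\sup_t\|f(t)\|<1$ completes the proof, and the final sentence about convex bodies follows from Theorem \ref{Mori-Ozawa} exactly as in Proposition \ref{Strong Mankiewicz property}.
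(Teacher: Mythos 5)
The paper does not prove this proposition from first principles: its entire proof consists of the observation that, under either hypothesis, \cite[Corollaries 8 and 9]{MeNav1995} already assert that $B_{C(K,X)}$ is the convex hull of its extreme points, after which Theorem \ref{Mori-Ozawa} yields the strong Mankiewicz property. You are therefore attempting to reprove the cited result of Mena-Jurado and Navarro-Pascual from scratch. Your framing of the target is correct (the extreme points are exactly the functions with $\|p(t)\|=1$ everywhere, and only the convex-hull statement needs work), but the core constructions you propose do not go through in either case.

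In case (2), the strategy of writing $f$ as a mean of \emph{two} extreme points via a continuous selection of $t\mapsto S_X\cap S(2f(t),1)$ is genuinely false at the stated level of generality. First, the generic fibre is an $(n-2)$-sphere, which is $(n-3)$-connected rather than $(n-2)$-connected as you claim, so Michael's finite-dimensional selection theorem would require $\dim K\le n-2$, one less than the hypothesis. This off-by-one is not a bookkeeping slip: take $X=\mathbb{R}^{3}$ with the Euclidean norm and $K=S^{2}$ (so $\dim K=2=n-1$) and $f(t)=\frac{1}{2}t$. Any continuous $u:K\to S^{2}$ with $\|2f(t)-u(t)\|=1$ satisfies $\langle t,u(t)\rangle=\frac{1}{2}$, so $u(t)\neq\pm t$, and the tangential part $u(t)-\frac{1}{2}t$ is a nowhere-vanishing continuous tangent vector field on $S^{2}$, contradicting the hairy ball theorem. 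Hence this $f$ is not a mean of two extreme points of $B_{C(S^{2},\mathbb{R}^{3})}$, even though it lies in the convex hull of the extreme points by \cite{MeNav1995}; the correct argument must allow convex combinations of more than two extreme points, which is precisely what \cite{MeNav1995} (building on \cite{Peck1968}) does. In case (1) the gap is different but also real: on an overlap $U_i\cap U_j$ the unit vectors $y_i$ and $y_j$ are unrelated, so $g=\sum_i\varphi_i y_i$ can have small norm there and is not close to sphere-valued; composing $g$ and $h$ with a Lipschitz retraction of $B_X$ onto $S_X$ then destroys the identity $f=\frac{1}{2}(g+h)$. At best this line of attack would place $f$ in the \emph{closed} convex hull of the extreme points, which would still suffice for the strong Mankiewicz conclusion via Theorem \ref{Mori-Ozawa} but not for the first assertion of the proposition, and even that approximate statement is not actually delivered by the sketch. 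In short, both halves of the proposal stop exactly where the real work of the cited reference begins.
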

\begin{proof} Under any one of the conditions (1) or (2), we deduce from \cite[Corollaries 8 and 9]{MeNav1995} that the closed unit ball of $C(K, X)$ is the convex hull of its extreme points. Therefore the desired conclusion is a consequence of  Mori--Ozawa's Theorem \ref{Mori-Ozawa}.
\end{proof}

In the remaining of this section, $K$ shall denote a locally compact Hausdorff space, and $X$ shall denote a   strictly convex Banach space.

For each $t _0\in K$ and each $x_0\in S_X$ we set $$A(t_0,x_0):=\{ f\in S_{C_0(K, X)} : f(t_0) = x_0\}.$$

\begin{lemma}\label{maximal-bis}  Let $t _0\in K$ and  $x_0\in S_X$. Then $A(t _0,x_0)$ is a  maximal norm-closed proper face of $B_{C_0(K, X)}$, equivalently, a maximal convex subset of $S_{C_0(K, X)}$.
\end{lemma}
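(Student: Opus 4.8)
The plan is to mimic the proof of Lemma \ref{maximal}, adapting the argument from the $\ell_\infty$-sum setting to the $C_0(K,X)$ setting. First I would observe that $A(t_0, x_0)$ is visibly a convex subset of $S_{C_0(K,X)}$: given $f, g \in A(t_0,x_0)$ and $\lambda \in [0,1]$, the function $\lambda f + (1-\lambda)g$ takes the value $x_0$ at $t_0$, and since $\|x_0\| = 1$ while $\|\lambda f + (1-\lambda)g\|_\infty \le 1$, we get that $\lambda f + (1-\lambda)g \in S_{C_0(K,X)}$. Hence by Zorn's lemma there is a maximal convex subset $C$ of $S_{C_0(K,X)}$ containing $A(t_0,x_0)$.

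Next I would invoke \cite[Lemma 3.1]{Tan2016preprint} to obtain a functional $\varphi \in S_{C_0(K,X)^*}$ with $C = \{ f \in B_{C_0(K,X)} : \varphi(f) = 1\}$. The goal is to show $C \subseteq A(t_0, x_0)$, which forces equality. For this I would build a suitable test function to play the role of $z_0$ in Lemma \ref{maximal}: using local compactness and the fact that singletons are closed, choose (via Urysohn's lemma for locally compact Hausdorff spaces) a function $h \in S_{C_0(K,X)}$ with $h(t_0) = x_0$ — concretely $h = u \otimes x_0$ where $u \in C_0(K)$, $0 \le u \le 1$, $u(t_0) = 1$. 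Then $h \in A(t_0,x_0) \subseteq C$. Now given any $f \in C$, convexity of $C \subseteq S_{C_0(K,X)}$ gives $\tfrac12(f + h) \in S_{C_0(K,X)}$, so $\|f + h\|_\infty = 2$; evaluating $\varphi$ at $f$ and $h$ gives $\varphi(f+h) = 2 = \|f+h\|_\infty$. The key point is then to extract pointwise information at $t_0$ from this.

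The main obstacle — the step requiring the most care — is passing from $\|f + h\|_\infty = 2$ to $\|f(t_0) + x_0\|_X = 2$, after which strict convexity of $X$ immediately yields $f(t_0) = x_0$ and hence $C = A(t_0,x_0)$. Unlike the $\ell_\infty$-sum case, the sup over $K$ need not be attained, so I cannot argue directly that the supremum norm of $f+h$ is realized at $t_0$. Instead I would exploit the specific shape of $h = u \otimes x_0$: since $\|f\|_\infty \le 1$ and $\|h(t)\|_X = u(t) \le 1$ with $u(t) \to 0$ at infinity, for $t$ outside a compact set where $u$ is small we have $\|f(t) + h(t)\|_X \le 1 + u(t) < 2 - \delta$; so the supremum, being $2$, must be approached on the compact set $\{t : u(t) \ge \varepsilon\}$, and in fact one can choose $u$ so that $u \equiv 1$ on a neighbourhood of $t_0$ — but that alone does not pin the supremum to $t_0$ either. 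The cleanest route is: take a net $(t_n)$ with $\|f(t_n) + u(t_n) x_0\|_X \to 2$; since $\|f(t_n)\| \le 1$ and $u(t_n) \le 1$ this forces $u(t_n) \to 1$ and $\|f(t_n) + x_0\|_X \to 2$, and then $\|f(t_n) - x_0\|_X \to 0$ by strict convexity (uniformly, using that the modulus of convexity is positive on the sphere of the strictly convex — here I should instead use a compactness or direct $\varepsilon$-argument rather than uniform convexity, since $X$ is only strictly convex). To avoid the uniformity issue entirely, the simplest fix is to choose $u$ supported in a small clopen-free neighbourhood is not available, so instead: replace the global test by noting $\varphi$ restricted suitably, or — best — argue that for the particular $\varphi$ above, $\varphi(g)$ depends on $g(t_0)$ alone. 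Concretely, one shows using statements (2)–(6)-type reasoning (as in Lemma \ref{l tech 2}, or directly) that $\varphi$ vanishes on every $g \in C_0(K,X)$ with $g(t_0) = 0$; combined with $\varphi(h) = 1$ and the structure $C_0(K,X) = \{g : g(t_0)=0\} \oplus \{u \otimes x : x \in X\}$ near $t_0$, one identifies $\varphi$ with a functional of the form $g \mapsto \phi(g(t_0))$ for some $\phi \in S_{X^*}$, whence $\varphi(f) = 1$ gives $\phi(f(t_0)) = 1 = \|f(t_0)\| \le 1$ so $f(t_0) \in S_X$ is the unique support point, equal to $x_0$ since $\phi(x_0) = \varphi(h) = 1$ and $X$ is strictly convex. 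Either way, once $f(t_0) = x_0$ for all $f \in C$ we conclude $C = A(t_0, x_0)$, which establishes both that $A(t_0,x_0)$ is a maximal convex subset of the sphere and, equivalently, a maximal norm-closed proper face of $B_{C_0(K,X)}$.
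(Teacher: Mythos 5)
Your argument is correct, but the route you finally settle on differs from the paper's. The paper fixes an arbitrary open neighbourhood $U$ of $t_0$, takes a Urysohn function $h\in C_0(K)$ with $h(t_0)=1$ and $h|_{K\setminus U}=0$, and uses that $\Vert h\otimes x_0+g\Vert=2$ \emph{is} attained at some $t_1$ (here your worry is misplaced: a function vanishing at infinity always attains its sup norm, since $\{t:\Vert f(t)\Vert\geq \Vert f\Vert/2\}$ is compact); the genuine issue, which you correctly sensed, is only that $t_1$ need not be $t_0$. The paper resolves this because $h(t_1)=1$ forces $t_1\in U$, so strict convexity gives $g(t_1)=x_0$ for some point of $U$, and letting $U$ shrink to $t_0$ and using continuity of $g$ (the same net argument written out in the proof of Lemma \ref{l tech 2-bis}(1)) yields $g(t_0)=x_0$. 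Your final route instead identifies the supporting functional: one shows $\varphi(g)=0$ whenever $g(t_0)=0$ by the $\Vert g\pm u\otimes x_0\Vert\leq 1+\varepsilon$ trick (this is the argument of Lemma \ref{l tech 2-bis}(5) applied directly to $\varphi$, and is easier there since $\varphi(-u\otimes x_0)=-1$ is automatic by linearity), whence $\varphi=\phi\otimes\delta_{t_0}$ with $\phi(x):=\varphi(u\otimes x)$, $\Vert\phi\Vert=\phi(x_0)=1$, and $\phi(f(t_0))=1=\Vert f(t_0)\Vert$ together with strict convexity forces $f(t_0)=x_0$. This is a clean and arguably more robust alternative; it costs a short verification that $\phi$ is well defined and linear, but avoids any localization-of-the-maximum argument. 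Your first attempt (a net $t_n$ with $\Vert f(t_n)+x_0\Vert\to 2$, hoping to conclude $f(t_n)\to x_0$) would indeed fail exactly as you diagnosed, since strict convexity gives no uniform modulus; you were right to abandon it, and the remaining sketchiness in phrases like ``using statements (2)--(6)-type reasoning'' is filled in by the explicit computation above.
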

\begin{proof} Let $U$ be an open set of $K$ with $t_0\in U$. Applying Urysohn's lemma we find $h\in C_0(K)$ with $0\leq h\leq 1$, $h(t_0)=1$ and $h\arrowvert _{K\setminus U}=0$. We consider $h\otimes x_0\in S_{C_0(K, X)} $. Since $A(t _0,x_0)$ is a convex set of $S_{C_0(K, X)}$, by Zorn's lemma, there exists a maximal convex subset $C$ of $S_{C_0(K, X)}$ that contains $A(t _0,x_0)$. By \cite[Lemma 3.1]{Tan2016preprint}, there exists $\varphi \in S_{C_0(K, X)^*}$ such that $C=\{f\in B_{C_0(K, X)}:\varphi (f)=1\}$. Given $g\in C$ we have that $\varphi (h\otimes x_0+g)=2=\Vert h\otimes x_0+g\Vert $. This implies that there exists $t_1\in K$ such that $\Vert h(t_1)x_0+g(t_1)\Vert =2$. Since $\Vert h(t_1)x_0\Vert =1=\Vert g(t_1)\Vert $ and  $0\leq h\leq 1$, we have that $h(t_1)=1$. Now the strict convexity of $X$ yields $x_0=g(t_1)$. We conclude that $C=A(t_0,x_0)$. \end{proof}
	

The proof of the next result is  similar to that of Lemma \ref{l existence of support functionals for the image of a face}.

\begin{lemma}\label{l existence of support functionals for the image of a face-bis} Let $Y$ be  a Banach space and let $\Delta : S_{C_0(K, X)}\to S_Y$ be a surjective isometry. Then for each $t_0\in K$ and each $x_0\in S_X$ the set
	$${\rm supp}(t_0,x _0) := \{\psi\in S_{Y^*} : \  \psi^{-1} (\{1\})\cap B_{Y} = \Delta (A(t_0,x_0)) \}$$
	is a non-empty weak$^*$-closed face of $B_{Y^*}$.$\hfill\Box$
\end{lemma}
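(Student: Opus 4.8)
The plan is to mimic the proof of Lemma~\ref{l existence of support functionals for the image of a face}, which itself follows \cite[Lemma 2.1]{JVMorPeRa2017}, replacing the use of Lemma~\ref{maximal} by Lemma~\ref{maximal-bis}. First I would note that $\Delta$ is a surjective isometry between the unit spheres $S_{C_0(K,X)}$ and $S_Y$, and that by Lemma~\ref{maximal-bis} the set $A(t_0,x_0)$ is a maximal convex subset of $B_{C_0(K,X)}$ (equivalently, of $S_{C_0(K,X)}$). Then, invoking \cite[Lemma 3.5]{Tan2014}, the image $\Delta(A(t_0,x_0))$ is again a maximal convex subset of $B_Y$ (surjective sphere isometries carry maximal convex subsets of the sphere to maximal convex subsets of the sphere, since they preserve the relevant metric/convex structure, and the argument of \cite{Tan2014} applies to any Banach space $Y$).

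Next, by \cite[Lemma 3.1]{Tan2016preprint} applied in $Y$, a maximal convex subset of $S_Y$ is automatically a norm-closed proper face of $B_Y$ that is \emph{exposed}: there exists $\psi\in S_{Y^*}$ with $\Delta(A(t_0,x_0))=\{y\in B_Y:\psi(y)=1\}=\psi^{-1}(\{1\})\cap B_Y$. This shows ${\rm supp}(t_0,x_0)\neq\varnothing$. For the weak$^*$-closedness: if $\psi'\in\overline{{\rm supp}(t_0,x_0)}^{\,w^*}$, then since every element of $\Delta(A(t_0,x_0))$ satisfies $\psi(y)=1$ for all $\psi$ in the supp set, taking weak$^*$-limits gives $\psi'(y)=1$ for all $y\in\Delta(A(t_0,x_0))$; hence $\Delta(A(t_0,x_0))\subseteq\{y\in B_Y:\psi'(y)=1\}$. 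But $\Delta(A(t_0,x_0))$ is a \emph{maximal} convex subset of $B_Y$ and $\{y\in B_Y:\psi'(y)=1\}$ is a proper convex subset of $B_Y$ containing it, so the two coincide, i.e.\ $\psi'\in{\rm supp}(t_0,x_0)$. Thus ${\rm supp}(t_0,x_0)$ is weak$^*$-closed.

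Finally, to see that ${\rm supp}(t_0,x_0)$ is a face of $B_{Y^*}$: suppose $\psi\in{\rm supp}(t_0,x_0)$ and $\psi=\tfrac12(\psi_1+\psi_2)$ with $\psi_1,\psi_2\in B_{Y^*}$. Fix any $y\in\Delta(A(t_0,x_0))$; then $1=\psi(y)=\tfrac12(\psi_1(y)+\psi_2(y))$ with $\psi_i(y)\le\|y\|=1$, forcing $\psi_1(y)=\psi_2(y)=1$. In particular $\|\psi_i\|\ge 1$, so $\psi_i\in S_{Y^*}$, and $\Delta(A(t_0,x_0))\subseteq\psi_i^{-1}(\{1\})\cap B_Y$; maximality of $\Delta(A(t_0,x_0))$ then gives equality, so $\psi_i\in{\rm supp}(t_0,x_0)$. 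Hence ${\rm supp}(t_0,x_0)$ is a face.

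The only genuinely nonroutine input is the transfer lemma \cite[Lemma 3.5]{Tan2014} that a surjective isometry between unit spheres sends maximal convex subsets to maximal convex subsets; everything else is a direct adaptation of Lemma~\ref{l existence of support functionals for the image of a face} with $A(\gamma_0,x_{\gamma_0})$ replaced by $A(t_0,x_0)$ and Lemma~\ref{maximal} replaced by Lemma~\ref{maximal-bis}, so I would simply write ``The proof is similar to that of Lemma~\ref{l existence of support functionals for the image of a face}, using Lemma~\ref{maximal-bis} in place of Lemma~\ref{maximal}'' and leave the $\hfill\Box$ as in the statement — which, reading the excerpt again, is exactly what the author does.
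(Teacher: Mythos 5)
Your proposal is correct and follows exactly the route the paper intends: the paper omits the proof with the remark that it is ``similar to that of Lemma~\ref{l existence of support functionals for the image of a face}'', and that earlier proof is precisely your chain Lemma~\ref{maximal-bis} $\Rightarrow$ \cite[Lemma 3.5]{Tan2014} $\Rightarrow$ \cite[Lemma 3.1]{Tan2016preprint}, followed by the same maximality argument for weak$^*$-closedness. Your added verification of the face property (which the paper's model proof leaves implicit) is a harmless and correct supplement.
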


With the appropriate changes, the proof of the next lemma follows the arguments in the proofs of \cite[Lemmas 2.4 and 2.5, and Propositions 2.6 and 3.1]{CuePer}.

\begin{lemma}\label{l tech 2-bis} Let $Y$ be a Banach space and let $\Delta : S_{C_0(K, X)}\to S_Y$ be a surjective isometry. Pick $t _0\in K $ and $x_{0}\in S_{X}$. Then the following assertions hold:	
	\begin{enumerate}
		\item $\psi \Delta(f) =-1$ for all $f\in A(t_0,-x_{0})$ and $\psi \in {\rm supp}(t _0,x_{ _0})$.
		\item   $\Delta (-A(t_0,x_{0}))=-\Delta (A(t _0,x_{0}))$.
		\item ${\rm supp}(t_0,x_{0})\cap {\rm supp}(t ,x)=\varnothing$, for all  $t \neq t_0$ and $x\in S_{X}$.
		\item ${\rm supp}(t _0,x_{0})\cap {\rm supp}(t_0,x')=\varnothing$, for every $x'\in S_{X}$ with $x'\neq x_{0}$.
		\item  If  $\psi \in {\rm supp}(t _0,x_{0}),$ and if $f\in S_{C_0(K, X)}$ with $f(t _0)=0$, then \linebreak $\psi \Delta(f) =0$. Furthermore, $\Vert \psi (\Delta (f))\Vert < 1$ for all $f\in S_{C_0(K, X)}$  with $\Vert f(t _0)\Vert <1$ and  $\psi \in {\rm supp}(t _0,x_{0})$.
		\item If $K$ is totally disconnected, then $f(t _0)  =0$ whenever $f$ is  in $S_{C_0(K, X)}$ with $\psi \Delta(f) =0$ for all    $x\in S_{X}$ and $\psi \in {\rm supp}(t_0 ,x)$.
	\end{enumerate}
\end{lemma}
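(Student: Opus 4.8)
The plan is to prove Lemma \ref{l tech 2-bis} by transporting, via the surjective isometry $\Delta$, the geometric structure of $S_{C_0(K,X)}$ established in Lemmas \ref{maximal-bis} and \ref{l existence of support functionals for the image of a face-bis}, following the scheme of the corresponding statements for $C(K,H)$ in \cite{CuePer}. The unifying observation is that $\Delta$ preserves distances, hence preserves maximal convex subsets of the spheres (by \cite[Lemma 3.5]{Tan2014}), antipodal points, and the star relation $St(\cdot)$; all six assertions are consequences of pushing these facts forward. I would set $f_0:=h\otimes x_0\in S_{C_0(K,X)}$ for a Urysohn bump $h$ peaking at $t_0$ as in Lemma \ref{maximal-bis}, note that $St(f_0)=A(t_0,x_0)$ in the domain (since, by strict convexity of $X$, the only maximal face of $B_{C_0(K,X)}$ through $f_0$ is $A(t_0,x_0)$), and record that $\Delta(St(f_0))=St(\Delta(f_0))$ because $\Delta$ is a sphere isometry.

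For (1) and (2): since $A(t_0,-x_0)=-A(t_0,x_0)$ in the domain and $\mathrm{dist}(f,A(t_0,x_0))$ is constant on $A(t_0,-x_0)$ (the two faces are "antipodal," separated by distance $2$), the image set $\Delta(A(t_0,-x_0))$ must be the antipodal face $-\Delta(A(t_0,x_0))$; an argument with the support functional $\psi$ then gives $\psi\Delta(f)=-1$ there, and this is exactly \cite[Proposition 2.6]{CuePer} transcribed. For (3) and (4): two distinct faces $A(t_0,x_0)$ and $A(t,x)$ (whether $t\neq t_0$, or $t=t_0$ with $x\neq x_0$) cannot share a supporting functional because, using strict convexity of $X$, one can exhibit an element of $S_{C_0(K,X)}$ lying in one face whose distance to the other is $<2$; transferring via $\Delta$ forces $\mathrm{supp}(t_0,x_0)\cap\mathrm{supp}(t,x)=\varnothing$. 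For (5): if $f(t_0)=0$ then $\|f_0+f\|=\|f_0-f\|=1$ (both are $\le 1$ since the supremum is achieved away from where $h$ is large), so $f$ is equidistant from $f_0$ and $-f_0$; applying $\Delta$ and the face description gives $\psi\Delta(f)=0$, and the "furthermore" clause for $\|f(t_0)\|<1$ follows by an approximation/convexity argument writing $f$ between elements of the face and its antipode, again as in \cite[Proposition 3.1]{CuePer}.

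The main obstacle, and the only place where total disconnectedness of $K$ is genuinely used, is assertion (6): from $\psi\Delta(f)=0$ for every $x\in S_X$ and every $\psi\in\mathrm{supp}(t_0,x)$ we must conclude $f(t_0)=0$. Arguing by contradiction, suppose $\|f(t_0)\|>\varepsilon>0$. Because $K$ is totally disconnected (hence $\dim(K)=0$, so $t_0$ has a neighborhood base of clopen sets), I can pick a clopen $U\ni t_0$ with $\|f(t)-f(t_0)\|<\varepsilon/2$ on $U$, and then produce a genuinely $X$-valued continuous function $g$ supported on $U$ with $g(t_0)$ a suitable norm-one vector $x$ and with $\|f+g\|$ or a related combination computing to $2$ in a way that certifies $\psi\Delta(f)\neq 0$ for an appropriate $\psi\in\mathrm{supp}(t_0,x)$ — the clopen set is what lets one build $g$ without disturbing $f$ off $U$ and without the continuity obstruction that $\dim(K)>0$ would impose (this is the analogue of where the hypothesis $\dim(H_\RR)\ge 2$ combined with $\dim K=0$ enters in \cite{CuePer}). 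Here the strict convexity of $X$ is used to pin down $g(t_0)$ exactly, and $\dim(X_\RR)\ge 2$ would ultimately be needed to have enough room among the choices of $x$; concretely I expect to invoke \cite[Lemma 2.5]{CuePer} verbatim after the clopen reduction. The remaining assertions are then bookkeeping, and I would present the proof by indicating for each item the precise lemma of \cite{JVMorPeRa2017} or \cite{CuePer} whose argument carries over after replacing $\ell_\infty$-coordinates or the scalar factor by the evaluation $f\mapsto f(t_0)$ and the Hilbert-space fibre by the strictly convex space $X$.
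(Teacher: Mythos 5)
Your outline for items (1)--(5) follows the same route as the paper (transport the face structure via $\Delta$, use the Fang--Wang lemma and the arguments of \cite{JVMorPeRa2017} and \cite{CuePer}), but two of your intermediate claims are not correct as stated. In (5), it is false in general that $\Vert f_0+f\Vert=\Vert f_0-f\Vert=1$ for a fixed bump $f_0=h\otimes x_0$ and every $f$ with $f(t_0)=0$: the zero set of $f$ need not contain a neighbourhood of $t_0$, so one must choose, for each $\varepsilon>0$, a Urysohn function $g$ supported in the open set $\{t:\Vert f(t)\Vert<\varepsilon\}$, obtain only $\Vert f\pm g\otimes x_0\Vert\le 1+\varepsilon$, deduce $\vert\psi\Delta(f)\pm1\vert\le1+\varepsilon$, and let $\varepsilon\to0$. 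In (1)--(2) the inference ``the two faces are mutually at distance $2$, hence the image of one is the negative of the image of the other'' is not valid by itself; the actual argument must show that $h:=\Delta^{-1}(-\Delta(f))$ lies in $A(t_0,x_0)$, which the paper does by testing $h$ against a net of bumps $g_O\otimes x_0$ and extracting points $t_O\to t_0$ with $h(t_O)=x_0$. These are repairable by the citations you give, but they are not mere bookkeeping.

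The genuine gap is in (6), which you yourself identify as the crux. Your plan is to build a function $g$ supported on a small clopen set $U\ni t_0$ on which $\Vert f(t)-f(t_0)\Vert<\varepsilon/2$, with $\Vert f+g\Vert$ (or ``a related combination'') equal to $2$. This cannot work in the relevant case $0<\Vert f(t_0)\Vert<1$: on $U$ one has $\Vert f(t)\Vert\le\Vert f(t_0)\Vert+\varepsilon/2<1$, so for any $g$ supported on $U$ with $\Vert g\Vert\le1$ the supremum of $\Vert f(t)+g(t)\Vert$ over $U$ is strictly less than $2$, and off $U$ it is at most $\Vert f\Vert\le1$; no alignment to norm $2$ is available, so no functional $\psi\in{\rm supp}(t_0,x)$ is certified to be nonzero on $\Delta(f)$ this way. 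The correct mechanism is different: after the trivial case $\Vert f(t_0)\Vert=1$, one chooses a compact open $\mathcal{O}\ni t_0$ on which $\Vert f(s)-f(t_0)\Vert<\tfrac{1}{2}\Vert f(t_0)\Vert$ (so that $f$ does not vanish on $\mathcal{O}$), and replaces $f$ on $\mathcal{O}$ by its radial normalization $f(s)/\Vert f(s)\Vert$, leaving $f$ unchanged off $\mathcal{O}$. The resulting $h$ is continuous precisely because $\mathcal{O}$ is clopen, lies in $A\bigl(t_0,f(t_0)/\Vert f(t_0)\Vert\bigr)$, and satisfies $\Vert f-h\Vert\le 1-\tfrac{1}{2}\Vert f(t_0)\Vert<1$; then $1=\psi\Delta(h)-\psi\Delta(f)\le\Vert h-f\Vert<1$ gives the contradiction. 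Your sketch is missing this normalization idea, which is the actual content of assertion (6).
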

\begin{proof} Let $O$ be an open set of $K$  with $t_0\in O$. Via Urysohn's lemma, we find $h\in C_0(K) $ with $0\leq h\leq 1$, $h(t_0)=1$ and $h|_{K\setminus O}=0$. Let us set $f_0=h\otimes x_0$.
	
First we see that $St(\Delta (f_0))= \Delta (A(t_0,x_{0}))$. Let $y\in \Delta (A(t_0,x_{0}))$. Then $\Delta ^{-1}(y)\in A(t_0,x_{0})$, and hence $\Vert \Delta ^{-1}(y)+f_0\Vert =2$. By \cite[Corollary 2.2]{FangWang06}, $\Vert y+\Delta (f_0)\Vert =2$, and so $y\in St(\Delta (f_0))$. Conversely, let $y\in St(\Delta (f_0))$. Then $\Vert y+\Delta (f_0)\Vert =2$ and hence, by \cite[Corollary 2.2]{FangWang06}, $\Vert \Delta ^{-1}(y)+f_0\Vert =2$. This implies that $\Vert \Delta ^{-1}(y)(t_0)+x_{0}\Vert =2$. Now, since $X$ is strictly convex,  $\Delta ^{-1}(y)(t_0)=x_{0}$, and so $\Delta ^{-1}(y)\in A(t_0,x_{0})$.

	\medskip

	(1) Let  $f\in A(t _0,-x_{0})$ and $\psi \in {\rm supp}(t _0,x_{0})$. Since $-\Delta (f)\in S_Y$, there exists $h\in S_{C_0(K,X)}$ with $\Delta (h)=-\Delta (f)$. Let $g\in A(t _0, x_{0})$. Then  $2=\Vert g-f\Vert =\Vert \Delta ( g)-\Delta (f)\Vert$, and hence  $\Delta (h)=-\Delta (f)\in St(\Delta (g)) $ and $\Vert \Delta ( g)+\Delta (h)\Vert =2$. By \cite[Corollary 2.2]{FangWang06}, $\Vert  g+ h\Vert =2$ for every $g\in A(t _0,x_{0})$. Then there exists $t_g\in K$ such that $g(t_g)=h(t_g)$. Let $\mathcal{F}$ be the family of all open subset of $K$ containing $t_0$, and let $O\in \mathcal{F}$. Then, by Urysohn's lemma, there exists  $g_O\in C_0(K) $ with $0\leq g_{O}\leq 1$, $g_{O}(t_0)=1$ and $g_{O}|_{K\setminus O}=0$. Since $g_O\otimes x_0\in A(t _0,x_{0})$, there exists $t_O\in K$ such that $g_O(t_O)x_0=h(t_O)$ and $\Vert g_O(t_O)x_0\Vert =\Vert h(t_O)\Vert =1$. Since $g_O(t_O)\in [0,1]$ and $\Vert g_O(t_O)x_0\Vert =1$, we have $g_O(t_O)=1$, hence $x_0=h(t_O)$. Considering $\mathcal{F}$ as a directed set under the reverse inclusion, the net $(t_O)_{O\in \mathcal{F}}$ converges to $t_0$. It follows from the continuity of $h$ that  $h(t_0)=x_0$ and $h\in A(t _0,x_{0})$.  Therefore $\psi (\Delta (h))=1=- \psi (\Delta (f))$.

	\medskip

	(2) By (1),  $\psi (\Delta (f))=-1$ whenever $f\in A(t _0,-x_{0})$ and $\psi \in {\rm supp}(t_0,x_{0})$. Since $A(t _0,-x_{0})=-A(t _0,x_{0})$ and $\psi ^{-1} (1)\cap B_{C_0(K, X)}=\Delta (A(t_0,x_{0}))$, we have that $$ \Delta (-A(t _0,x_{0}))=\psi ^{-1}(-1)\cap B_{C_0(K, X)}=- \Delta (A(t _0,x_{0}))$$  for every $\psi \in {\rm supp}(t_0,x_{0})$.

	\medskip
	
	(3) We argue by contradiction, and hence we assume that there exist $t_1 \neq t_0$, $x_1\in S_X$,  and $\psi \in {\rm supp}(t _0,x_0)\cap {\rm supp}(t_1,x_{1})$. By Urysohn's lemma, there are $u,v\in C_0(K) $ with $0\leq u,v\leq 1$, $u(t_0)=1=v(t_1)$ and $u(t) v(t)=0$ for every $t\in K$.  Define the elements $f_1:=u\otimes x_0\in A(t_0,x_{0})$ and $f_2:=v\otimes x_1\in A(t_1,x_{1})$. By (1), $\psi (\Delta (-f_2))=-1$, and hence $$2=\psi (\Delta (f_1))-\psi (\Delta (-f_2))\leq \Vert \Delta (f_1)- \Delta (-f_2)  \Vert =\Vert f_1+f_2  \Vert =1,$$
	the desired contradiction.
	
	\medskip
	
	(4) Let $u\in C_0(K)$ and $x'\in S_X$ be such that $u(t_0)=1$ and  $x_0\neq x'$. Then $f_0:=u\otimes x_0\in A(t_0,x_{0})$ and $f_1:=u\otimes x'\in A(t_0,x')$. If $\psi \in {\rm supp}(t _0,x_0)\cap {\rm supp}(t_0,x')$, then $$2=\psi (\Delta (f_0))+\psi (\Delta (f_1))\leq \Vert \Delta (f_0)+ \Delta (f_1)  \Vert =\Vert f_1+f_2  \Vert =\Vert x_0+x'\Vert <2 ,$$ a contradiction.
	Therefore ${\rm supp}(t _0,x_0)\cap {\rm supp}(t_0,x')=\varnothing $, as desired.
	
	\medskip

	(5) Let  $f$ be in $S_{C_0(K, X)}$, and let $\psi $ be in ${\rm supp}(t _0,x_{0})$. Suppose that $f(t_0)=0$. Let $0<\varepsilon <1$, and define the open set $$O_\varepsilon :=\{t\in K: \Vert f(t)\Vert  < \varepsilon \}.$$ By Urysohn's lemma, there exists $g\in C_0(K)$  with $0\leq g\leq 1$, $g(t_0)=1$ and $g(K\setminus O_\varepsilon)=0$. Then $\Vert f\pm g\otimes x_0\Vert \leq 1+\varepsilon$. By (2), $\psi (\Delta (-g\otimes x_0))=-1$ and, by definition, $\psi (\Delta (g\otimes x_0))=1$. It follows $$\vert \psi (\Delta (f))\pm 1\vert = \vert \psi (\Delta (f))- \psi (\Delta (\pm g\otimes x_0))\vert  \leq \Vert \Delta (f)- \Delta (\pm g\otimes x_0) \Vert  $$$$\leq\Vert f \pm g\otimes x_0 \Vert \leq 1+\varepsilon .$$
	By letting $\varepsilon \to 0$, we obtain $\vert \psi (\Delta (f))\pm 1\vert \leq 1$. This implies $\psi (\Delta (f))=0$.
	Now suppose merely that $\Vert f(t_0)\Vert <1$. Pick $0<\varepsilon <1$ with $\Vert f(t _0)\Vert <1 -\varepsilon$ and consider the closed set $$C_\varepsilon :=\{t\in K: \Vert f(t)\Vert  \geq 1-\varepsilon \}.$$ By Urysohn's lemma, there exists $h\in C_0(K)$ such that $0\leq h\leq 1$, $h(t_0)=0$ and $h(C_\varepsilon )=1$. The function $hf$ in $S_{C_0(K, X)}$ satisfies $(hf)(t_0)=0$. Consequently, $\psi (\Delta (hf))=0$. We conclude that $$\vert \psi (\Delta (f))\vert =\vert \psi (\Delta (f)) -\psi (\Delta (hf))\vert \leq \Vert \Delta (f) -\Delta (hf)\Vert  $$$$=\Vert f-hf\Vert \leq 1-\varepsilon <1. $$
	
	\medskip

	(6) Suppose that $K$ is totally disconnected. Let $f$ be in $S_{C_0(K, X)}$ such that  $\psi (\Delta (f))=0$ for all  $x\in S_{X}$ and $\psi \in {\rm supp}(t_0,x)$. Assume towards a contradiction that $f(t_0)\neq 0$. If $\Vert f(t_0)\Vert =1$, then $\psi (\Delta (f))=1$ whenever $\psi $ lies in ${\rm supp}(t_0,f(t_0))$, and this is impossible. Therefore  $\Vert f(t_0)\Vert <1$.  It is known that  a totally disconnected locally compact Hausdorff space has a basis of its  topology consisting of compact open sets. We can always find a compact open subset $\mathcal{O}$ satisfying  that $$t_0\in \mathcal{O}\subseteq \Big\{s\in K: \Vert f(s)-f(t_0)\Vert <\frac{\Vert f(t_0)\Vert }{2}  \Big\}.$$ Since $0<\frac{1}{2}\Vert f(t_0)\Vert <\Vert f(s)\Vert $ for every $s\in \mathcal{O}$, the function $h$ defined on $K$ by $h(s):=f(s)$ if $s\in K\setminus \mathcal{O}$ and $h(s):=\frac{f(s)}{\Vert f(s)\Vert}$ if $s\in \mathcal{O}$  belongs to $S_{C_0(K, X)}$,  and $\Vert h(t_0)\Vert =1$. Therefore, $$\Vert f-h\Vert =\sup_{s\in \mathcal{O}}\big\Vert \frac{f(s)}{\Vert f(s)\Vert}-f(s)\big\Vert = \sup_{s\in \mathcal{O}}\Vert 1-\Vert f(s)\Vert\Vert \leq 1-\frac{\Vert f(t_0)\Vert}{2}.$$
	Now, $h\in  A(t_0,\frac{f(t_0)}{\Vert f(t_0)\Vert}) $,  and for  $\psi $ in ${\rm supp}(t _0,\frac{f(t_0)}{\Vert f(t_0)\Vert})$ it follows that $$ 1=\psi (\Delta (h)) -\psi (\Delta (f))\leq \Vert \Delta (h) -\Delta (f)\Vert =\Vert h -f\Vert \leq 1-\Vert f(t_0)\Vert<1 ,$$ the desired contradiction, hence $f(t_0)= 0$.\end{proof}

\medskip

\section{The Mazur--Ulam property in $\bigoplus_{\gamma \in \Gamma}^{c_0} X_\gamma $ and  $\bigoplus_{\gamma \in \Gamma}^{\ell_{\infty}} X_\gamma $}

\medskip

We recall that an element $x\in S_X$ is a \emph{smooth point} of $X$  if there is a unique $f\in S_{X^*}$ such that $f(x)=1$. We denote by $\it{Sm}(X)\subseteq S_X$ the set of all smooth points of $X$ and, given $x\in \it{Sm}(X)$,   we denote by $\varphi_x\in S_{X^*}$   the unique functional such that $\varphi_x (x)=1$. It is known that $x$ is a smooth point of $X$ if and only if the norm of $X$  is G\^{a}teaux differentiable at $x$ \cite[Corollary 1.5]{DevGodZiz93}. We refer to \cite[\S I.1]{DevGodZiz93} for the basic results on G\^{a}teaux differentiability.

We note that the class of those Banach spaces $X$ such that $ \it{Sm}(X)$ is norm dense in $S_X$ contains all separable Banach spaces \cite[Proposition 9.4.3]{Ro} and all Asplund Banach spaces \cite[\S I.1]{DevGodZiz93}.

\smallskip

\begin{proposition}\label{c-0-suma-estric-convex-norming} Let  $\{X_\gamma \}_{\gamma \in \Gamma}$ be a family of strictly convex Banach spaces such that  $\vert \Gamma \vert \geq 2$, and such that dim$(X_\gamma )\geq 2 $ and the set $\it{Sm}(X_\gamma)$  is norm dense in $S_{X_\gamma }$ for every $\gamma $. Let $Y$ be a Banach space, and let $\Delta : S_{Z}\to S_Y$ be a surjective isometry such that $\Delta \arrowvert_{ A(\gamma ,x)}$ is an affine map whenever $\gamma \in \Gamma $ and  $x\in S_{X_{\gamma }}$. Then the equality $$\psi \Delta (z) = \varphi_{x} \otimes P_{\gamma} (z) $$ holds for all $\gamma \in \Gamma $, $x\in S_{X_{\gamma }}$,  $\psi\in {\rm supp}(\gamma ,x)$ and $z \in S_Z$ with $\Vert z(\gamma ' )\Vert =1$ for some $\gamma ' \neq \gamma $.
\end{proposition}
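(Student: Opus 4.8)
The plan is to exploit the assumption on $z$ in order to vary the $\gamma$-coordinate of $z$ freely without leaving $S_Z$, and then to run an elementary affinity argument inside a single maximal face on which $\Delta$ is already known to be affine. Fix $\gamma\in\Gamma$, $x\in S_{X_\gamma}$, $\psi\in{\rm supp}(\gamma,x)$ and $z\in S_Z$ with $\|z(\gamma')\|=1$ for some $\gamma'\neq\gamma$; set $w:=z(\gamma')\in S_{X_{\gamma'}}$. For each $v\in B_{X_\gamma}$ let $z^v\in Z$ be the element coinciding with $z$ on $\Gamma\setminus\{\gamma\}$ and satisfying $z^v(\gamma)=v$. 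I would first check that $z^v\in S_Z$: since $\gamma'\neq\gamma$, the $\gamma'$-coordinate of $z^v$ is still $w$, so $\sup_{\beta\in\Gamma}\|z^v(\beta)\|=\max\bigl(\|v\|,\ \sup_{\beta\neq\gamma}\|z(\beta)\|\bigr)=1$, and for $Z_0$ the scalar family $(\|z^v(\beta)\|)_{\beta\in\Gamma}$ still lies in $c_0(\Gamma)$, being a one-entry modification of $(\|z(\beta)\|)_{\beta\in\Gamma}$. Moreover $z^v(\gamma')=w$, so $z^v\in A(\gamma',w)$, which by Lemma \ref{maximal} is a maximal convex subset of $S_Z$ and on which $\Delta$ is affine by hypothesis. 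Note also that $z=z^{z(\gamma)}$.

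Since $v\mapsto z^v$ is an isometric (hence continuous) affine map from $B_{X_\gamma}$ into $A(\gamma',w)$, composing it with the affine map $\Delta\arrowvert_{A(\gamma',w)}$ and then with the linear functional $\psi$ produces a continuous affine function $h\colon B_{X_\gamma}\to\RR$, $h(v)=\psi\Delta(z^v)$; write $h(v)=c+g(v)$ with $c=h(0)\in\RR$ and $g\in X_\gamma^{*}$. Three evaluations then finish the argument: since $z^0(\gamma)=0$, Lemma \ref{l tech 2}(6) gives $c=h(0)=\psi\Delta(z^0)=0$; since $z^x\in A(\gamma,x)$, the very definition of ${\rm supp}(\gamma,x)$ gives $g(x)=h(x)=\psi\Delta(z^x)=1$; and since $\|\Delta(z^v)\|=1$ for every $v\in B_{X_\gamma}$ we get $|g(v)|=|h(v)|\leq\|\psi\|=1$, hence $\|g\|\leq1$. (Applying Lemma \ref{l tech 2}(2) to $z^{-x}\in A(\gamma,-x)$ yields $g(-x)=-1$, which is merely a consistency check.)

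Thus $g$ is a norm-one functional on $X_\gamma$ with $g(x)=1$, i.e.\ a support functional of $X_\gamma$ at $x$; by the G\^{a}teaux differentiability of the norm of $X_\gamma$ at the smooth point $x$ this functional is unique, so $g=\varphi_x$. Evaluating the identity $h(v)=g(v)$ at $v=z(\gamma)$ then gives $\psi\Delta(z)=\varphi_x(z(\gamma))=\varphi_x\otimes P_\gamma(z)$, as claimed. The argument is short and I do not foresee a genuine obstacle; the two points that deserve care are the verification that $z^v$ never leaves $S_Z$ — this is precisely where the hypothesis ``$\|z(\gamma')\|=1$ for some $\gamma'\neq\gamma$'' is used, and it goes through uniformly for both $Z_0$ and $Z_\infty$ — and the fact that $h$ is genuinely affine, which hinges on the standing hypothesis that $\Delta$ is affine on every face $A(\gamma',\cdot)$.
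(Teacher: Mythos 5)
Your argument is correct, and it reaches the conclusion by a genuinely different and more elementary route than the paper. Both proofs begin identically: the affinity of $\Delta$ on the maximal face $A(\gamma',z(\gamma'))$ is converted into an affine functional, and Lemma \ref{l tech 2}(6) kills the constant term. The divergence is in how that functional is identified with $\varphi_x\otimes P_\gamma$. The paper builds a norm-one functional $\widetilde{\varphi}\in Z^*$ factoring through $P_{\Gamma\setminus\{\gamma'\}}$, observes that it attains the value $1$ at the element supported at $\gamma$ with value $x$, and then pins it down as $\varphi_{x}\otimes P_{\gamma}$ by a quantitative weak$^*$ approximation: $\widetilde{\varphi}$ is approximated by convex combinations of the norming functionals $\varphi_{y}\otimes P_{\beta}$ (this is where the density of $\it{Sm}(X_\beta)$ in $S_{X_\beta}$ for \emph{every} $\beta$ is used) and the \v{S}mulyan-type estimate \cite[Theorem I.1.4]{DevGodZiz93} at the smooth point $x$ finishes the identification. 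You instead restrict attention to the single coordinate $\gamma$: the affine embedding $v\mapsto z^v$ of $B_{X_\gamma}$ into $A(\gamma',z(\gamma'))$ (legitimate precisely because $\Vert z(\gamma')\Vert=1$ keeps $z^v$ on the sphere for both $Z_0$ and $Z_\infty$) turns $\psi\circ\Delta$ into an affine function $h$ on $B_{X_\gamma}$ with $h(0)=0$, $h(x)=1$ and $\vert h\vert\leq 1$, so its linear part is a norm-one support functional at $x$, and smoothness of the norm at the single point $x$ already forces it to equal $\varphi_x$; evaluating at $v=z(\gamma)$ concludes. This bypasses the whole weak$^*$-convex-hull estimate, and it only uses smoothness at the point $x$ rather than density of smooth points (that density hypothesis is of course still needed later, e.g.\ in Proposition \ref{afin} and Theorem \ref{suma-Mazur-Ulam}, to produce a norming family of such functionals). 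One remark that applies equally to your proof and to the paper's: the conclusion involves $\varphi_x$, which is defined only for $x\in \it{Sm}(X_\gamma)$, so the hypothesis ``$x\in S_{X_\gamma}$'' in the statement must be read as ``$x\in \it{Sm}(X_\gamma)$''; that is the only case either argument covers, and it is the only case in which the proposition is subsequently invoked.
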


\begin{proof}   Let us fix $\gamma_0, \gamma_1\in \Gamma$ with $\gamma_1 \neq \gamma _0$, $x_0\in \it{Sm}(X_{\gamma_0})$, $\psi\in {\rm supp}(\gamma_0,x_0)$, and $p\in S_Z$ with $\Vert p(\gamma_1 )\Vert =1$.  We set $Z_{\gamma_1} :=P_{\Gamma \setminus {\gamma_1} }(Z)$ and define $p_1\in Z$ by $p_1 (\gamma_1):=p(\gamma_1 )$ and $p_1(\gamma ):=0$ if $\gamma \neq \gamma_1$. It follows that $$ A(\gamma_1 ,p(\gamma_1 )) = \{ z\in S_{Z} : z(\gamma_1 ) = p(\gamma_1)\} = p_1 + B_{Z_{\gamma_1 }}.$$ Since $\Delta : A(\gamma _1,p(\gamma _1))\to \Delta (A(\gamma_1 ,p(\gamma _1)))$ is affine,  the mapping \linebreak  $\Delta_{p_1} : B_{Z_{\gamma_1}}\to \Delta (A(\gamma_1 ,p(\gamma_1 )))$ defined by $\Delta_{p_1}(z):=\Delta (p_{1}+z)$  is affine. Now, the mapping $\varphi$ from $B_{Z_{\gamma_1}}$ to $\mathbb{R}$ defined by  $$\varphi (z):= \psi \Delta (p_1+z) - \psi \Delta(p_1)$$ is  affine with $|\varphi (z) |\leq 1$ for all $z\in  B_{Z_{\gamma_1}}$. We can regard $\varphi$ as the restriction to $B_{Z_{\gamma_1}}$ of a functional $\varphi '$ in $ B_{Z_{\gamma_1}^*}$. Let $\widetilde{\varphi}$ be the function from $Z$ to $\RR $ defined by $\widetilde{\varphi} (z) := \varphi '(P_{\Gamma \setminus \gamma_1 }(z))$ for every $z\in Z$. It follows that $\widetilde{\varphi}$ belongs to $B_{Z^*}$. By Lemma \ref{l tech 2} (6), $\psi\Delta(p_1)=0$ since $p_1 (\gamma_0) =0$ and $\psi\in {\rm supp}(\gamma_0,x_0)$, and hence $$\widetilde{\varphi} (z) = \psi \Delta (p_{\gamma _1}+ P_{\Gamma \setminus \gamma_1 }(z))\  \mbox{ for every} \ z\in B_{Z}.$$
We define $z_0\in S_{Z_{\gamma_1}}\subseteq S_{Z}$  by $z_0(\gamma _0):=x_0$ and $z_0(\gamma ):=0 $  if $\gamma \neq \gamma_0$. It is clear that $p_1 +z_0\in A(\gamma _0,x_0)$, and since $\psi\in {\rm supp}(\gamma_0,x_0)$, we get $\widetilde{\varphi} (z_0) = \psi\Delta (p_1+z_0 )=1,$ and so $\|\widetilde{\varphi}\|=1$ in $Z^*$.

We claim that \begin{equation}\label{igualdad} \widetilde{\varphi} (z) = \varphi_{x_0}(z(\gamma_0)) \ \ \mbox{for every}\ z\in Z.
	\end{equation}
Let us fix $\varepsilon>0$ and $z\in B_{Z}$. Since  $X_{\gamma_0}$ is smooth at $x_0$, it follows from \cite[Theorem I.1.4]{DevGodZiz93} that there exists $0<\rho<\frac{\varepsilon}{8}$ with the following property: \begin{equation}\label{Gateaux} |\phi (z(\gamma_0)) -\varphi_{x_0} (z(\gamma_0)) |< \frac{\varepsilon}{4}\ \mbox{ for every}\  \phi\in B_{X^*_{\gamma_0}}\  \mbox{with}\   \phi (x_0)>1-\rho .
	\end{equation}
Since $\it{Sm}(X_\gamma )$ is norm dense in $S_{X_\gamma}$,  for each $\gamma \in \Gamma$, the set $$\{ \varphi_x \otimes P_\gamma : \gamma \in \Gamma,\ \ x\in \it{Sm}(X_\gamma ) \}\subseteq B_{Z^*}$$ is a norming set, and therefore, by the Hahn--Banach theorem, \begin{equation}\label{convex-hull} \overline{co}^{w^*} \{ \varphi_x \otimes P_\gamma : \gamma \in \Gamma,\ \ x\in \it{Sm}(X_\gamma ) \} =B_{Z^*}. \end{equation}
	By (\ref{convex-hull}), there exist $\lambda_1,\ldots ,\lambda _n\in (0,1]$ with $\sum_{i=1}^n\lambda_i=1$, $\gamma_i\in \Gamma$, and $x_{\gamma_i}\in \it{Sm}(X_{\gamma _i})$   such that
	\begin{equation}\label{smoot-2} \left|\widetilde{\varphi} (u) - \!  \! \left(\!\sum_{i=1}^{n} \! \lambda_i \varphi_{x_{\gamma _i}}\! \! \otimes \! \! P_{\gamma_i}\!  \!\right)\! (u) \right| <{\rho},\ \mbox{for}  \ u\in \{z_0,z \} .\end{equation}
	
	Set $N:=\{ i\in \{1,\ldots ,n \}: \gamma_i=\gamma_0 \}$. Since $\widetilde{\varphi} (z_0) =1$,  we have that $$ \left|1 -\left(\sum_{i=1}^{n} \lambda_i \ \varphi_{x_{\gamma _i}}\otimes P_{\gamma_i}\right) (z_0) \right|
	=  \left|\left(\varphi_{x_0} -\sum_{i\in N} \lambda_i \  \varphi_{x_{i}}\right)  (x_0) \right|<\rho \ .$$ It follows from (\ref{Gateaux}) that \begin{equation}\label{eq 6b} \left|\left(\varphi_{x_0} - \sum_{i\in N}\lambda_i \ \varphi_{x_i}\right) z(\gamma_0)  \right|< \frac{\varepsilon}{4}.
	\end{equation} On the other hand, we have $ 1- \sum_{i\in N}\lambda_i \  < {\rho},$ and so  $\sum_{i\notin N}\lambda_i <\rho$ because $ \sum_{i=1}^n \lambda_i =1$ . Now,  for each $i\in N$, we set $\displaystyle \mu_i := \frac{\lambda_i }{\sum_{j\in N}    \lambda_j }$.

	 It is not hard to check that $$\left\| \sum_{i\in N}\lambda_i  \ \varphi_{x_i}\otimes P_{\gamma_0} -  \sum_{i\in N}\mu_i \varphi_{x_{i}}\otimes P_{\gamma_0} \right\| < {\rho},$$ and hence $$\left\| \sum_{i=1}^{n} \lambda_i \ \varphi_{x_{\gamma_i}}\otimes P_{\gamma_i}- \sum_{i\in N}  \mu_i \varphi_{x_{i}}\otimes P_{\gamma_0} \right\|<2 {\rho}.$$ Keeping in mind the above inequalities we obtain $$\left|\left( \varphi_{x_{0}}  - \sum_{i\in N} \mu_i\ \varphi_{x_{i}} \right)  (z(\gamma_0))\right| \leq  \left|\left(\varphi_{x_0} - \sum_{i\in N} \lambda_i \ \varphi_{x_{i}}\right) (z(\gamma_0))  \right| $$
	$$ + \left|\left(\sum_{i\in N} \lambda_i \ \varphi_{x_{i}} - \sum_{i\in N} \mu_i\ \varphi_{x_{i}}\right) (z(\gamma_0))  \right| <\hbox{(by (\ref{eq 6b})})$$ $$<\frac{\varepsilon}{4}+ \left| \left( \sum_{i\in N} \lambda_i  \ \varphi_{x_{i}}\otimes P_{\gamma_0} -  \sum_{i\in N} \mu_i \varphi_{x_{i}}\otimes P_{\gamma_0}\right) (z(\gamma_0)) \right|<$$ $$< \frac{\varepsilon}{4}+ {\rho}.$$
	
	We therefore have
	
	$$ \left| \widetilde{\varphi} (z) - \varphi_{x_0}\otimes P_{\gamma_0} (z) \right| \leq  \left| \widetilde{\varphi} (z) -\left(\sum_{i=1}^n \lambda_i  \varphi_{x_{\gamma _i}}\otimes P_{\gamma_i}\right) (z) \right|$$

	$$ + \left| \left(\sum_{i=1}^n \lambda_i  \varphi_{x_{\gamma_i}}\otimes P_{\gamma_i}- \sum_{i\in N} \mu_i \varphi_{x_{i}}\otimes P_{\gamma_0}\right) (z) \right| $$

	$$ + \left| \left( \sum_{i\in N} \mu_i \varphi_{x_{i}}\otimes P_{\gamma_0} - \varphi_{x_0}\otimes P_{\gamma_0} \right) (z) \right|< \hbox{(by (\ref{smoot-2}))}$$
	$$< {\rho} + \left\| \sum_{i=1}^{n} \lambda_i  \varphi_{x_{\gamma_i}} \! \otimes P_{\gamma_i}-\! \!  \sum_{i\in N} \mu_i \varphi_{x_{i}}\! \otimes P_{\gamma_0} \right\| + \left|\left( \! \varphi_{x_0}  - \! \sum_{i\in N}\!  \mu_i \varphi_{x_i} \right)  (z(\gamma_0))\right|$$ $$< 4 {\rho} +\frac{\varepsilon}{4}<\varepsilon.$$ 	By letting $\varepsilon \to 0$, we realize that $ \widetilde{\varphi} (z) = \varphi_{x_0}\otimes P_{\gamma_0} (z)$, which shows that (\ref{igualdad}) holds. Consequently, $$\psi \Delta (p)= \psi  \Delta (p_1+P_{\Gamma \setminus \gamma_1 }(p))=\widetilde{\varphi}(p)= \varphi_{x_0} \otimes P_{\gamma_0} (p)$$ as desired. \end{proof}

\begin{proposition}\label{afin} Let  $\{X_\gamma \}_{\gamma \in \Gamma}$ be a family of strictly convex Banach spaces such that  $\vert \Gamma \vert \geq 2$, and such that dim$(X_\gamma ) \geq 2 $ and the set $\it{Sm}(X_\gamma)$  is norm dense in $S_{X_\gamma }$ for every $\gamma $. Let $Y$ be a Banach space, and let $\Delta : S_{Z}\to S_Y$ be a surjective isometry. Then, for each $\gamma \in \Gamma $ and each $x\in \it{Sm}(X_\gamma )$, the equality $$\psi \Delta (z) = \varphi_{x} \otimes P_{\gamma} (z) $$ holds for all $\psi\in {\rm supp}(\gamma ,x)$ and $z\in S_Z$.
\end{proposition}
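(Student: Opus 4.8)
The plan is to deduce the statement from Proposition~\ref{c-0-suma-estric-convex-norming}: the only hypothesis there that is not already present here is that $\Delta$ restrict to an affine map on each face $A(\gamma,x)$, so the argument splits into two parts. First I would show that $\Delta\arrowvert_{A(\gamma,x)}$ is affine for every $\gamma\in\Gamma$ and every $x\in S_{X_\gamma}$; then, granting this, I would upgrade the conclusion of Proposition~\ref{c-0-suma-estric-convex-norming}, which a priori only concerns the $z\in S_Z$ with $\|z(\gamma')\|=1$ for some $\gamma'\neq\gamma$, to all of $S_Z$.

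For the first part, fix $\gamma$ and $x\in S_{X_\gamma}$ and set $W:=P_{\Gamma\setminus\gamma}(Z)$. Denoting by $z_{\gamma,x}$ the element of $Z$ with $\gamma$-coordinate $x$ and all other coordinates $0$, one has $A(\gamma,x)=z_{\gamma,x}+B_W$, a convex body in the affine copy of $W$. By Lemma~\ref{maximal}, $A(\gamma,x)$ is a maximal convex subset of $S_Z$, so by the proof of Lemma~\ref{l existence of support functionals for the image of a face} its image $\Delta(A(\gamma,x))$ is a maximal convex subset of $B_Y$, in particular convex. If $Z=Z_\infty$, then $W=\bigoplus_{\gamma'\neq\gamma}^{\ell_\infty}X_{\gamma'}$, and Proposition~\ref{convex-hull-extreme-point}, applied to the family $\{X_{\gamma'}\}_{\gamma'\neq\gamma}$, shows that $B_W$ is the convex hull of $Ext(B_W)$; Theorem~\ref{Mori-Ozawa} then shows $B_W$ has the strong Mankiewicz property, hence the surjective isometry $\Delta\arrowvert_{A(\gamma,x)}$, which up to translation has domain $B_W$ and convex image $\Delta(A(\gamma,x))$, is affine. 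If $Z=Z_0$, then $W=\bigoplus_{\gamma'\neq\gamma}^{c_0}X_{\gamma'}$, whose closed unit ball has no extreme points once $\Gamma$ is infinite, so Theorem~\ref{Mori-Ozawa} does not apply directly; here I would exhaust $B_W$ by the norm-dense union of the balls of the finite $\ell_\infty$-subsums $\bigoplus_{\gamma'\in F}^{\ell_\infty}X_{\gamma'}$ ($F\subseteq\Gamma\setminus\gamma$ finite), obtain the midpoint identity for pairs of points lying in a common finite stage as in the $Z_\infty$ case, and then push it to all of $A(\gamma,x)$ using that $\Delta$, being an isometry, is continuous.

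For the second part, fix a smooth point $x\in S_{X_\gamma}$ and a $\psi\in{\rm supp}(\gamma,x)$; by Proposition~\ref{c-0-suma-estric-convex-norming}, $\psi\Delta(z)=\varphi_x\otimes P_\gamma(z)$ for every $z\in S_Z$ having $\|z(\gamma')\|=1$ for some $\gamma'\neq\gamma$ — call such $z$ \emph{good}. Let now $z\in S_Z$ be not good, so $\|z(\gamma')\|<1$ for every $\gamma'\neq\gamma$. If $\|z(\gamma)\|=1$, put $v:=z(\gamma)\in S_{X_\gamma}$, so $z\in A(\gamma,v)$; since $\Delta\arrowvert_{A(\gamma,v)}$ is affine and bounded, $\psi\Delta\arrowvert_{A(\gamma,v)}$ has the form $z'\mapsto c_v+\ell_v(P_{\Gamma\setminus\gamma}z')$ with $c_v\in\RR$ and $\ell_v\in W^*$. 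For each $\gamma'\neq\gamma$ and $u\in S_{X_{\gamma'}}$, the elements $z'_\pm\in S_Z$ with $z'_\pm(\gamma)=v$, $z'_\pm(\gamma')=\pm u$ and all other coordinates $0$ are good, so $\psi\Delta(z'_\pm)=\varphi_x(v)$, whence $\varphi_x(v)=c_v+\ell_v(P_{\Gamma\setminus\gamma}z'_\pm)$; since $P_{\Gamma\setminus\gamma}z'_+=-P_{\Gamma\setminus\gamma}z'_-$, subtracting gives $\ell_v(P_{\Gamma\setminus\gamma}z'_+)=0$, and as $\gamma'$ and $u$ vary this shows $\ell_v$ vanishes on every single-coordinate subspace of $W$ and $c_v=\varphi_x(v)$. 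Since $\ell_v$ is bounded and kills every coordinate it vanishes identically: for $Z_0$ this is immediate because the finitely supported vectors are norm-dense, while for $Z_\infty$ one writes $w=\widetilde w-(\widetilde w-w)$ for $w\in B_W\setminus\{0\}$, where $\widetilde w$ agrees with $w$ except that its $\gamma'$-coordinate, for some $\gamma'$ with $w(\gamma')\neq0$, is replaced by $w(\gamma')/\|w(\gamma')\|$; then $\|\widetilde w\|=1$, so $\widetilde w$ is $P_{\Gamma\setminus\gamma}$ of a good element of $A(\gamma,v)$ and hence $\ell_v(\widetilde w)=0$ (using $c_v=\varphi_x(v)$), whereas $\widetilde w-w$ is supported at the single index $\gamma'$ and so $\ell_v(\widetilde w-w)=0$. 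Thus $\psi\Delta(z)=c_v=\varphi_x(v)=\varphi_x\otimes P_\gamma(z)$. If instead $\|z(\gamma)\|<1$, which can only occur when $Z=Z_\infty$, then $\sup_{\gamma'\neq\gamma}\|z(\gamma')\|=1$; pick $\gamma'_n\neq\gamma$ with $\|z(\gamma'_n)\|\to1$ and let $z_n$ be $z$ with its $\gamma'_n$-coordinate replaced by $z(\gamma'_n)/\|z(\gamma'_n)\|$. Each $z_n$ is good, $z_n\to z$ in norm, and $z_n(\gamma)=z(\gamma)$, so passing to the limit in $\psi\Delta(z_n)=\varphi_x(z_n(\gamma))=\varphi_x(z(\gamma))$, using the continuity of $\psi\Delta$, gives $\psi\Delta(z)=\varphi_x\otimes P_\gamma(z)$ and finishes the proof.

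I expect the main obstacle to be the $c_0$-sum case of the first part. Since the unit ball of $\bigoplus_{\gamma'\neq\gamma}^{c_0}X_{\gamma'}$ carries no extreme points, the Mori--Ozawa theorem cannot be invoked there, and the delicate point is to make the finite-subsum exhaustion actually force affinity of $\Delta$ on $A(\gamma,x)$ — notably to ensure that $\Delta$ maps the relevant finite-stage convex bodies to convex sets, so that the strong Mankiewicz property can be applied at each stage. Once this is in place, the limiting argument of the third paragraph is routine, and the $Z_\infty$ case of the first part together with the whole second part is a direct assembly of the results of Section~2 and Proposition~\ref{c-0-suma-estric-convex-norming}.
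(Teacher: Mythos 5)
Your overall strategy coincides with the paper's: first establish that $\Delta$ is affine on each maximal face $A(\gamma,x)$ (via Mori--Ozawa for the $\ell_\infty$-sum, and via a finite-stage exhaustion plus a limiting argument for the $c_0$-sum), then feed this into Proposition \ref{c-0-suma-estric-convex-norming} and extend from the ``good'' elements to all of $S_Z$. Your second part takes a slightly different but valid route: you represent $\psi\Delta$ on $A(\gamma,v)$ as $c_v+\ell_v\circ P_{\Gamma\setminus\gamma}$ and kill $\ell_v$ coordinate by coordinate (the normalization trick $w=\widetilde w-(\widetilde w-w)$ correctly handles the failure of density of finitely supported vectors in the $\ell_\infty$-sum), whereas the paper instead uses the extreme-point decomposition of Proposition \ref{convex-hull-extreme-point} for $Z_\infty$ and a two-point perturbation for $Z_0$. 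Both versions of that step are sound, as is your treatment of the case $\Vert z(\gamma)\Vert<1$ by approximating with good elements.

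The one genuine gap is precisely the point you flag and leave open: in the $c_0$ case you must show that $\Delta$ maps each finite-stage face $A_{\Gamma_0}(\gamma_0,z(\gamma_0))=z_0+B_{\bigoplus_{\gamma'\in\Gamma_0\setminus\gamma_0}^{\ell_\infty}X_{\gamma'}}$ onto a \emph{convex} subset of $S_Y$, since the strong Mankiewicz property only applies to surjective isometries whose range is convex; without this, the exhaustion does not get off the ground. The paper closes it with Lemma \ref{l tech 2}: given $u,v$ in the finite-stage face and $\beta\in(0,1)$, the point $\beta\Delta(u)+(1-\beta)\Delta(v)$ lies in $\Delta(A(\gamma_0,z(\gamma_0)))$ (the image of the full face is a maximal convex set), hence equals $\Delta(w)$ for some $w$ in the full face; for every $\gamma'\in\Gamma\setminus\Gamma_0$ and $x'\in S_{X_{\gamma'}}$ one has $\psi\Delta(u)=\psi\Delta(v)=0$ for all $\psi\in{\rm supp}(\gamma',x')$ by item (6) of that lemma, so $\psi\Delta(w)=0$ by linearity of $\psi$, and item (7) then forces $w(\gamma')=0$, i.e.\ $w$ lies back in the finite stage. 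Inserting this argument makes your proof complete.
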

\begin{proof} We first show the result in the case $Z=Z_{\infty}$.
	
Let us fix $\gamma \in \Gamma $ and $x\in \it{Sm}(X_\gamma )$.  Since the set $$B:=\{z\in S_{Z_\infty}: \ \mbox{there exists} \ \gamma \in \Gamma \ \mbox{such that} \ \Vert z(\gamma )\Vert _{X_\gamma}=1 \}$$ is dense in $S_{Z_\infty}$, to prove the  equality  $$\psi \Delta (z) = \varphi_{x} \otimes P_{\gamma} (z) \ \ \mbox{for all}\ \psi\in {\rm supp}(\gamma ,x)\ \mbox{and } z\in S_{Z_{\infty}},$$ it is enough to verify it in the case that $z$ lies in $B$, and hence there exists $\gamma _0\in \Gamma$ such that $\Vert z(\gamma _0 )\Vert _{X_{\gamma _0}}=1$. We define $z_0\in S_{Z_\infty}$,  by $z_0(\gamma_0)=z(\gamma_0)$ and $z_0(\gamma )=0$ for $\gamma \neq \gamma_0$, and $Z_{\gamma_0}:= \bigoplus _{\gamma \in \Gamma \setminus \gamma_0}^{\ell_{\infty}} X_\gamma $.
By Lemma \ref{maximal}, $A(\gamma_0 ,z(\gamma_0 )) $ is a maximal convex subset of $S_{Z_\infty}$. Then, by \cite[Lemma 5.1]{ChenDong2011},   $\Delta (A(\gamma_0 ,z(\gamma_0 )))$ is a maximal convex subset of $Y$. Furthermore, the mapping $\Delta_{z_0} : B_{Z_{\gamma_0}}\to \Delta (A(\gamma_0 ,p(\gamma_0 )))$ defined by $\Delta_{z_0}(z):=\Delta (z_0+z)$ is a surjective isometry.  It follows from  Proposition \ref{Strong Mankiewicz property} and Theorem \ref{Mori-Ozawa} that  $\Delta_{z_0}$ is an affine map. Then the restriction of	$\Delta$ to $A(\gamma_0 ,z(\gamma_0 ))$, regarded as a mapping onto $\Delta (A(\gamma_0 ,z(\gamma_0 )))$, is affine. By Lemma \ref{extreme points}, $\|p(\gamma )\|_{X_{\gamma }}=1$ for all $p\in Ext (B_{Z_\infty})$ and $\gamma \in \Gamma$, and  hence, by Proposition \ref{c-0-suma-estric-convex-norming},  the equality $$\psi \Delta (p) = \varphi_{x} \otimes P_{\gamma } (p) $$ is true for every $\psi\in {\rm supp}(\gamma ,x)$. By Proposition \ref{convex-hull-extreme-point}, there exists $p,q\in Ext(B_{Z_\infty}) $ such that $z=\frac{1}{2}(p+q)$. Since $X_{\gamma _0}$ is strictly convex and $z(\gamma_0)=\frac{1}{2}(p(\gamma_0)+q(\gamma_0))$, it follows that $p,q\in A(\gamma_0 ,z(\gamma_0 ))$. Since $\Delta (z)=\frac{1}{2}(\Delta (p)+\Delta (q))$, we have that  $$\psi \Delta (z)=\frac{1}{2}(\psi \Delta (p)+\psi \Delta (q))=$$ $$= \frac{1}{2} (\varphi_{x} \otimes P_{\gamma } (p)+ \varphi_{x} \otimes P_{\gamma } (q))=\varphi_{x} \otimes P_{\gamma } (\frac{1}{2}(p+q))=\varphi_{x} \otimes P_{\gamma } (z)$$ for every $\psi\in {\rm supp}(\gamma ,x).$

Now we  show the result in the case $Z=Z_0$. We  begin by showing that, given $\gamma_0 \in \Gamma $ and $z\in S_{Z_0}$ with $\Vert z(\gamma_0  )\Vert _{X_{\gamma_0 }}\! \!=1$, the restriction of	$\Delta$ to $A(\gamma_0 ,z(\gamma_0 ))$, regarded as a mapping onto $\Delta (A(\gamma_0 ,z(\gamma_0 )))$, is affine.

By Lemma \ref{maximal}, $A(\gamma_0 ,z(\gamma_0 )) $ is a maximal convex subset of $S_{Z_0}$. Then, by \cite[Lemma 5.1]{ChenDong2011}, $\Delta (A(\gamma_0 ,z(\gamma_0 )))$ is a maximal convex subset of $Y$. Let us take $\varepsilon >0$, $z_1,z_2\in A(\gamma_0 ,z(\gamma_0 ))$ and $\alpha \in (0,1)$. Then there exists a finite subset $\Gamma _0\subseteq \Gamma$ with $\vert \Gamma_0\vert \geq 2$, such that $\gamma _0\in \Gamma _0$ and $\Vert z_i-z_i'\Vert <\varepsilon$  where $z_i':=P_{\Gamma_0}(z_i) $ ($i=1,2$).
We set  $$A_{\Gamma_0}(\gamma_0 ,z(\gamma_0 )):=\{z\in A(\gamma_0 ,z(\gamma_0 )): P_{ \Gamma \setminus \Gamma_0}(z)=0 \},$$ and define  $z_0\in S_{Z_0}$  by $z_0(\gamma_0):=z(\gamma_0)$ and $z_0(\gamma ):=0$ for every $\gamma \in \Gamma \setminus \gamma_0$. It is clear that $$A_{\Gamma_0}(\gamma_0 ,z(\gamma_0 ))=z_0+B_{\bigoplus _{\gamma \in \Gamma_0 \setminus \gamma_0}^{\ell_\infty  } X_\gamma },$$ and that $z_1',z_2'\in A_{\Gamma_0}(\gamma_0 ,z(\gamma_0 ))$.

Let us  verify that the set $\Delta (A_{\Gamma_0}(\gamma_0 ,z(\gamma_0 )))$ is a convex subset of $S_Y$. Let $a,b\in \Delta (A_{\Gamma_0}(\gamma_0 ,z(\gamma_0 )))\subseteq \Delta (A(\gamma_0 ,z(\gamma_0 )))$ and $\beta \in (0,1)$. Then $\beta a+(1-\beta )b\in \Delta (A(\gamma_0 ,z(\gamma_0 )))$. Take $u,v\in A_{\Gamma_0}(\gamma_0 ,z(\gamma_0 ))$ and $w\in A(\gamma_0 ,z(\gamma_0 ))$ such that $\Delta (u)=a$, $\Delta (v)=b$, and $\Delta (w)=\beta a+(1-\beta )b$. By Lemma \ref{l tech 2} (6), for all $\gamma \in \Gamma \setminus \Gamma_0$ and $x\in S_{X_\gamma}$ we have   $\psi \Delta(u) =0 =\psi \Delta(v) $ for every $\psi \in {\rm supp}(\gamma ,x)$. This implies that $\psi \Delta(w)=0$ for all $\gamma \in \Gamma \setminus \Gamma_0$, $x\in S_{X_\gamma}$, and $\psi \in {\rm supp}(\gamma ,x)$. By Lemma \ref{l tech 2} (7), $w(\gamma)=0$ for every $\gamma \in \Gamma \setminus \Gamma_0$. It follows that $w\in  A_{\Gamma_0}(\gamma_0 ,z(\gamma_0 ))$, and hence $\Delta (A_{\Gamma_0}(\gamma_0 ,z(\gamma_0 )))$ is a convex subset of $S_Y$, as desired. Furthermore, the mapping $\Delta_{z_0} : B_{\bigoplus _{\gamma \in \Gamma_0 \setminus \gamma_0}^{\ell_\infty } X_\gamma }\to \Delta (A_{\Gamma_0}(\gamma_0 ,z(\gamma_0 )))$ defined by $\Delta_{z_0}(z):=\Delta (z_0+z)$ is a surjective isometry.  Therefore, by  Proposition \ref{Strong Mankiewicz property} and Theorem \ref{Mori-Ozawa},  $\Delta_{z_0}$ is an affine map, and so the restriction of	$\Delta$ to $A_{\Gamma_0}(\gamma_0 ,z(\gamma_0 ))$, regarded as a mapping onto $\Delta (A_{\Gamma_0}(\gamma_0 ,z(\gamma_0 )))$, is affine.
 This fact, together with the inequalities  $$\Vert \Delta (\alpha z_1+(1-\alpha )z_2)- \Delta (\alpha z'_1+(1-\alpha )z'_2)\Vert <\varepsilon $$ and $$\Vert \alpha \Delta (z_1)+(1-\alpha)\Delta (z_2) -\alpha \Delta (z'_1)-(1-\alpha)\Delta (z'_2)\Vert <\varepsilon ,$$ allows us to obtain $$\Vert \Delta (\alpha z_1+(1-\alpha )z_2)-\alpha \Delta (z_1)-(1-\alpha)\Delta (z_2)  \Vert <2\varepsilon .$$ 	By letting $\varepsilon \to 0$, we realize that the restriction of	$\Delta$ to $A(\gamma_0 ,z(\gamma_0 ))$, regarded as a mapping onto $\Delta (A(\gamma_0 ,z(\gamma_0 )))$, is affine.

Now the proof shall be concluded by showing that, for  $x_0\in \it{Sm}(X_{\gamma _0})$,  the equality $$\psi \Delta (z) = \varphi_{x_0} \otimes P_{\gamma_0} (z) $$ is true for all $\psi\in {\rm supp}(\gamma _0,x_0)$ and $z \in S_{Z_0}$.

Let us fix $z \in S_{Z_0}$ and $\gamma_1\in \Gamma$ such that $\Vert z(\gamma_1 )\Vert _{X_{\gamma_1 }}=1$. Pick $\varepsilon >0$, $\gamma _2\in \Gamma \setminus \{\gamma_0 \}$ with $\Vert z(\gamma_2 )\Vert _{X_{\gamma_2 }}<\varepsilon$, and  $x_{\gamma_2}\in S_{X_{\gamma_2 }}$. For $i=1,2$, define $p_i\in S_{Z_0}$ by $p_i(\gamma):=z(\gamma)$ if $\gamma \neq \gamma_2$ and $p_i(\gamma_2):=(-1)^ix_{\gamma_2}$. Then $p_i\in A(\gamma_1 ,z(\gamma_1))$ ($i=1,2$) and $\Vert z-\frac{p_1+p_2}{2} \Vert <\varepsilon$. By Proposition \ref{c-0-suma-estric-convex-norming}, $$\psi \Delta (p_i) = \varphi_{x_0} \otimes P_{\gamma_0 } (p_i) $$  for every $\psi\in {\rm supp}(\gamma_0 ,x_0)$. Since $\Delta \arrowvert_{ A(\gamma_1 ,z(\gamma_1 ))}$ is an affine map,  we have that $$\psi \Delta (\frac{p_1+p_2}{2}) =\frac{\psi \Delta (p_1)+\psi \Delta (p_2)}{2}= \varphi_{x_0} \otimes P_{\gamma_0 } (\frac{p_1+p_2}{2}) .$$ Then $$\Vert \psi \Delta (z) - \varphi_{x_0} \otimes P_{\gamma_0} (z) \Vert $$ $$ =\Vert \psi \Delta (z)-\psi \Delta (\frac{p_1+p_2}{2})\Vert +\Vert \varphi_{x_0} \otimes P_{\gamma_0 } (\frac{p_1+p_2}{2}) - \varphi_{x_0} \otimes P_{\gamma_0} (z) \Vert <2\varepsilon . $$ 	By letting $\varepsilon \to 0$, we obtain \[\psi \Delta (z) = \varphi_{x_0} \otimes P_{\gamma_0} (z). \qedhere \]	
\end{proof}

We can now establish the main result of this section.

\begin{theorem}\label{suma-Mazur-Ulam} Let  $\{X_\gamma \}_{\gamma \in \Gamma}$ be a family of strictly convex Banach spaces such that  $\vert \Gamma \vert \geq 2$, and such that dim$(X_\gamma ) \geq 2 $ and  the set $\it{Sm}(X_\gamma)$  is norm dense in $S_{X_\gamma }$ for all $\gamma $. Then $Z_0$ and $Z_\infty$ satisfy the  Mazur--Ulam property.
\end{theorem}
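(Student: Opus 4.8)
The plan is to turn the functional identity of Proposition~\ref{afin} into an honest linear extension. Fix a surjective isometry $\Delta\colon S_Z\to S_Y$, where $Z$ is $Z_0$ or $Z_\infty$, and define $T\colon Z\to Y$ by $T(0):=0$ and $T(z):=\|z\|\,\Delta\!\left(z/\|z\|\right)$ for $z\neq 0$. Since $\Delta$ is a bijection between the spheres, $T$ is a norm-preserving, positively homogeneous bijection, and it is the only candidate for the extension: any real-linear isometric extension of $\Delta$ must coincide with $T$ on $Z\setminus\{0\}$ by homogeneity. Hence the whole task reduces to proving that $T$ is real-linear.

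First I would record that, for every $\gamma\in\Gamma$, every $x\in\it{Sm}(X_\gamma)$ and every $\psi\in{\rm supp}(\gamma,x)$, the composition $\psi\circ T$ equals the bounded linear functional $\varphi_x\otimes P_\gamma$ on all of $Z$. For $z\in S_Z$ this is exactly Proposition~\ref{afin}; for general $z\neq 0$ it follows by positive homogeneity of $T$ and of $\varphi_x\otimes P_\gamma$, and for $z=0$ it is trivial. Thus $\psi\circ T$ is linear for each such $\psi$.

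The one substantive step that remains is to show that $G:=\bigcup\{{\rm supp}(\gamma,x):\gamma\in\Gamma,\ x\in\it{Sm}(X_\gamma)\}\subseteq S_{Y^*}$ is norming for $Y$; then, since the elements of $G$ are linear, $G$ separates the points of $Y$. To prove this, take $y\in S_Y$, write $y=\Delta(z)$ with $z\in S_Z$, and fix $\varepsilon\in(0,1)$. In both $Z_0$ and $Z_\infty$ there is $\gamma_0\in\Gamma$ with $\|z(\gamma_0)\|_{X_{\gamma_0}}>1-\varepsilon$; set $w:=z(\gamma_0)/\|z(\gamma_0)\|\in S_{X_{\gamma_0}}$ and, using the density of $\it{Sm}(X_{\gamma_0})$ in $S_{X_{\gamma_0}}$, pick $x\in\it{Sm}(X_{\gamma_0})$ with $\|x-w\|<\varepsilon$. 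Then $\varphi_x(w)\geq 1-\|x-w\|>1-\varepsilon$, so $\varphi_x(z(\gamma_0))=\|z(\gamma_0)\|\,\varphi_x(w)>(1-\varepsilon)^2$. By Lemma~\ref{l existence of support functionals for the image of a face} we may choose $\psi\in{\rm supp}(\gamma_0,x)$, and Proposition~\ref{afin} gives $\psi(y)=\varphi_x(z(\gamma_0))>(1-\varepsilon)^2$. Letting $\varepsilon\to 0$ shows $\sup_{\psi\in G}\psi(y)=1=\|y\|$, and positive homogeneity upgrades this to $\|u\|=\sup_{\psi\in G}|\psi(u)|$ for every $u\in Y$.

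Finally, given $z_1,z_2\in Z$ and $\lambda\in\RR$, for each $\psi\in G$ the linearity of $\psi\circ T$ yields $\psi\big(T(z_1+\lambda z_2)\big)=\psi(Tz_1)+\lambda\,\psi(Tz_2)=\psi\big(Tz_1+\lambda Tz_2\big)$; since $G$ separates points, $T(z_1+\lambda z_2)=Tz_1+\lambda Tz_2$. So $T$ is a real-linear, norm-preserving bijection of $Z$ onto $Y$ extending $\Delta$, hence a surjective real-linear isometry, and it is unique by the remark above. I expect the only point requiring any care to be the norming property of $G$ --- concretely, locating a coordinate on which $z$ nearly attains its norm and then smoothing it --- but this is routine given the standing density hypothesis; the real work has already been done in Proposition~\ref{afin} and the facial lemmas of Sections~2 and~3.
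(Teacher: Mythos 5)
Your argument is correct, and its skeleton is the same as the paper's: everything is reduced to Proposition~\ref{afin} together with the fact that $G=\bigcup\{{\rm supp}(\gamma,x):\gamma\in\Gamma,\ x\in \it{Sm}(X_\gamma)\}$ is norming for $Y$. Where you diverge is in the two bookends. First, the paper merely asserts the norming property of $G$ (as a consequence of $\{\varphi_x\otimes P_\gamma\}$ being norming for $Z$), whereas you prove it explicitly by locating a coordinate $\gamma_0$ where $z$ nearly attains its norm and approximating $z(\gamma_0)/\|z(\gamma_0)\|$ by a smooth point; this is a worthwhile detail to have on record, and your estimate $\psi(y)>(1-\varepsilon)^2$ is sound. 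Second, and more substantively, the concluding mechanisms differ: the paper verifies the identity $\|\Delta(z)+\lambda\Delta(\tilde z)\|=\|z+\lambda\tilde z\|$ for all $\lambda>0$ and $z,\tilde z\in S_Z$ and then invokes the external Lemma~2.1 of Fang--Wang \cite{FangWang06} to produce the linear extension, while you bypass that lemma entirely by observing that the positively homogeneous extension $T(z)=\|z\|\,\Delta(z/\|z\|)$ satisfies $\psi\circ T=\varphi_x\otimes P_\gamma$ on all of $Z$ for every $\psi\in G$, so that a separating (because norming) family of functionals linearizes $T$ directly. Your route is more self-contained and makes the uniqueness of the extension transparent; the paper's route is shorter on the page because it delegates the linearization to a citation. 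Both are valid, and the real work in either case has already been done in Proposition~\ref{afin}.
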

\begin{proof} Let $Y$ be a Banach space, and let $\Delta : S_{Z}\to S_Y$ be a surjective isometry. By Proposition \ref{afin}, the equality $$\psi \Delta (z) = \varphi_{x} \otimes P_{\gamma } (z) $$ is true for all $\gamma \in \Gamma $, $x\in \it{Sm}(X_\gamma )$, $\psi\in {\rm supp}(\gamma ,x)$, and $z\in S_Z$. Now, the set $$\{ \varphi_x \otimes P_\gamma : \gamma \in \Gamma,\ \ x\in \it{Sm}(X_\gamma ) \}\subseteq B_{Z^*}$$ is a norming set for $Z$, and hence the set $$\bigcup \{ {\rm supp}(\gamma ,x) : \gamma \in \Gamma  ,  x\in \it{Sm}(X_\gamma ) \}\subseteq B_{Y^*}$$ is a norming set for $Y$. It follows that $$\Vert \Delta (z) +\lambda \Delta (\tilde{z})\Vert $$ $$= \sup  \{ \psi( \Delta (z)) +\lambda \psi(\Delta (\tilde{z}))  : \gamma \in \Gamma,\ \ x\in \it{Sm}(X_\gamma ) , \  \psi\in {\rm supp}(\gamma ,x)\}$$ $$=\sup  \{ \varphi_{x} \otimes P_{\gamma }(z) +\lambda \varphi_{x} \otimes P_{\gamma }(\tilde{z}))  :\gamma \in \Gamma,\ \ x\in \it{Sm}(X_\gamma ) \}=\Vert z+\lambda \tilde{z} \Vert  $$ for all $\lambda >0$ and $z,\tilde{z}\in S_{Z}$. 	By \cite[Lemma 2.1]{FangWang06}, $\Delta$  can be extended to a real-linear isometry of $Z$ onto $Y$.\end{proof}

Now, let us see that the hypothesis about the G\^{a}teaux differentiability of the norm in   Theorem \ref{suma-Mazur-Ulam} can be removed in the   particular cases of  countable  $c_0$-sums and finite $\ell_\infty$-sums.

\begin{lemma}\label{separable} Let $X_1$, $X_2$, and $Y$ be Banach spaces, and let $A$ be a separable subset of $X_1\oplus_\infty X_2$. Suppose that  $\Delta :S_{X_1\oplus_\infty X_2} \to S_Y$ is a surjective isometry. Then there exist separable subspaces $M_1\subseteq X_1$, $M_2\subseteq X_2$ and $N\subseteq Y$ such that $A\subseteq M_1\oplus_\infty M_2$ and $\Delta (S_{M_1\oplus_\infty M_2})= S_N$.
\end{lemma}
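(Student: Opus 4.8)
The plan is to run a standard separable reduction (a L\"owenheim--Skolem type bootstrap, alternating between the two spheres). I would construct increasing sequences of separable subspaces $M_1^{(k)}\subseteq X_1$, $M_2^{(k)}\subseteq X_2$ and $N^{(k)}\subseteq Y$ so that at each stage $\Delta$ carries a countable dense subset of the sphere of $M_1^{(k)}\oplus_\infty M_2^{(k)}$ into $N^{(k+1)}$, while $\Delta^{-1}$ carries a countable dense subset of the sphere of $N^{(k)}$ into $M_1^{(k+1)}\oplus_\infty M_2^{(k+1)}$; passing to closed unions then yields a triple $(M_1,M_2,N)$ for which $\Delta$ and $\Delta^{-1}$ close up in the required way.

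Write $Z:=X_1\oplus_\infty X_2$ and let $P_1,P_2$ be the coordinate projections. For the initialization, since $A$ is separable so are $P_1(A)$ and $P_2(A)$; put $M_i^{(0)}:=\overline{\mathrm{span}}\,P_i(A)$ (separable) and $N^{(0)}:=\{0\}$, so that $A\subseteq M_1^{(0)}\oplus_\infty M_2^{(0)}$. For the inductive step, given separable $M_1^{(k)},M_2^{(k)},N^{(k)}$, set $M^{(k)}:=M_1^{(k)}\oplus_\infty M_2^{(k)}$, choose a countable dense set $D^{(k)}\subseteq S_{M^{(k)}}$ and a countable dense set $E^{(k)}\subseteq S_{N^{(k)}}$, and define $N^{(k+1)}:=\overline{\mathrm{span}}\bigl(N^{(k)}\cup\Delta(D^{(k)})\bigr)$ and $M_i^{(k+1)}:=\overline{\mathrm{span}}\bigl(M_i^{(k)}\cup P_i(\Delta^{-1}(E^{(k)}))\bigr)$ for $i=1,2$, all of which are separable. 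Finally let $M_i:=\overline{\bigcup_k M_i^{(k)}}$ ($i=1,2$), $N:=\overline{\bigcup_k N^{(k)}}$ and $M:=M_1\oplus_\infty M_2$; these are separable and $A\subseteq M$.

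It then remains to prove $\Delta(S_M)=S_N$. For the inclusion $\subseteq$: since the $M_i^{(k)}$ increase, $\bigcup_k M^{(k)}$ is dense in $M$, and normalizing a near point shows that $\bigcup_k S_{M^{(k)}}$, hence $\bigcup_k D^{(k)}$, is dense in $S_M$; so for $z\in S_M$ we may pick $d_j\in\bigcup_k D^{(k)}$ with $d_j\to z$, whence $\Delta(d_j)\to\Delta(z)$ (the isometry $\Delta$ is distance preserving on $S_Z$), while each $\Delta(d_j)$ lies in some $N^{(k_j+1)}\subseteq N$; thus $\Delta(z)\in N$, and $\|\Delta(z)\|=1$ forces $\Delta(z)\in S_N$. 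For the inclusion $\supseteq$: given $y\in S_N$, use surjectivity to write $y=\Delta(z)$ with $z=\Delta^{-1}(y)\in S_Z$; approximate $y$ by $e_j\in\bigcup_k E^{(k)}$, so $\Delta^{-1}(e_j)\to z$ (now $\Delta^{-1}$ is distance preserving on $S_Y$), and since $P_i(\Delta^{-1}(e_j))\in M_i^{(k_j+1)}\subseteq M_i$ and $M_i$ is closed, $P_i(z)\in M_i$; hence $z\in M$ with $\|z\|=1$, i.e.\ $z\in S_M$ and $y\in\Delta(S_M)$.

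The argument is essentially bookkeeping. The two points that deserve a line of care are: (i) that $\bigcup_k M^{(k)}$ is dense in the $\oplus_\infty$-sum $M$ and that $\bigcup_k S_{M^{(k)}}$ is dense in $S_M$ — the latter because, for $z\in S_M$ and a nearby $w\in\bigcup_k M^{(k)}$ with $0<\|w\|$, the point $w/\|w\|$ lies in some $S_{M^{(k)}}$ and is still close to $z$ — together with the analogous statements for $N$; and (ii) that running both enlargements simultaneously at every stage (putting $\Delta(D^{(k)})$ on the $N$-side and $P_i(\Delta^{-1}(E^{(k)}))$ on the $M_i$-sides) genuinely makes $\Delta$ map a dense subset of $S_M$ into $N$ and $\Delta^{-1}$ map a dense subset of $S_N$ into $M$ in the limit. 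Beyond this I do not anticipate any real obstacle.
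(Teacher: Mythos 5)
Your proposal is correct and follows essentially the same back-and-forth separable reduction as the paper: the paper likewise builds increasing separable subspaces $B_{(i,k)}\subseteq X_i$ and $N_k=\overline{Lin}(\Delta(S_{B_{(1,k)}\oplus_\infty B_{(2,k)}}))\subseteq Y$ with $\Delta^{-1}(S_{N_k})\subseteq S_{B_{(1,k+1)}\oplus_\infty B_{(2,k+1)}}$, passes to closed unions, and closes up both inclusions by the same density, normalization and Cauchy-sequence arguments. The only differences (enlarging both sides at each stage, and using countable dense subsets of the spheres instead of the whole separable spheres) are immaterial.
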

\begin{proof} Since $A_1:=A$ is a separable subset of $X_1\oplus_\infty X_2$, there exists a countable subset of $A_1$ (say $\{ a_n:n\in \NN \}$) dense in $A_1$. For each $n\in \NN$, we have $a_n=a_n^1+a_n^2$ with $a_n^1\in X_1$ and $a_n^2\in X_2$. Now,  for $i=1,2$, consider the separable closed subspace $B_{(i,1)}\subseteq X_i$ defined  by $B_{(i,1)}:=\overline{Lin}\{ a_n^i \} _{n\in \NN }$. It is clear that  $A_1\subseteq B_{(1,1)}\oplus_\infty B_{(2,1)}\subseteq X_1\oplus_\infty X_2.$ Now, $B_{(1,1)}\oplus_\infty B_{(2,1)}$ is a separable closed subspace of $X_1\oplus_\infty X_2$, hence $\Delta (S_{B_{(1,1)}\oplus_\infty B_{(2,1)}})$ is a separable subset of $S_Y$. Define $N_1:=\overline{Lin}(\Delta (S_{B_{(1,1)}\oplus_\infty B_{(2,1)}}))$. Since $S_{N_1}$ is a separable subset  of $S_Y$, we have that $A_2:=\Delta ^{-1}(S_{N_1})$ is a separable subset of $S_{X_1\oplus_\infty X_2}$.
Consider  $B_{(i,2)}\subseteq X_i$ such that  $B_{(i,1)}\subseteq B_{(i,2)}$ ($i=1,2$) and $A_2\subseteq B_{(1,2)}\oplus_\infty B_{(2,2)}$ and $N_2=\overline{Lin}(\Delta (S_{B_{(1,2)}\oplus_\infty B_{(2,2)}}))$. Define the separable closed subspaces $B_{(i,k)}\subseteq X_i$ ($i=1,2$) and $N_k\subseteq Y$ inductively by $\Delta ^{-1}(S_{N_k})\subseteq S_{B_{(1,k+1)}\oplus_\infty B_{(2,k+1)}}$, $B_{(i,k)}\subseteq B_{(i,k+1)}$ ($i=1,2$)  and $N_k:=\overline{Lin}(\Delta (S_{B_{(1,k)}\oplus_\infty B_{(2,k)}}))$ for every $k\geq 2$. We consider the separable closed subspaces $M_i:=\overline{\bigcup_{k\in \NN}B_{(i,k)}}$   ($i=1,2$) and $N:=\overline{\bigcup_{k\in \NN}N_k}$. The inclusion $\Delta (S_{M_1\oplus_\infty M_2})\subseteq S_N$ is clear. Let $y$ be in $\bigcup_{k\in \NN }N_k$ with $\Vert y\Vert =1$. Then there exists $k\in \NN$ such that $y\in S_{ N_k}$, and hence $\Delta ^{-1}(y)\in S_{B_{(1,k+1)}\oplus_\infty B_{(2,k+1)}} \subseteq S_{M_1\oplus_\infty M_2}$. Now let $y$ be arbitrary in $S_N$. Then we can find  a sequence $\{ y_n\}$ of norm-one elements in  $\bigcup_{k\in \NN}N_k$ such that $\{y_n\}\rightarrow y$. It is clear that  $\{\Delta ^{-1}(y_n)\}$ is a Cauchy sequence in $S_{M_1\oplus_\infty M_2}$, hence there exists $z\in S_{M_1\oplus_\infty M_2}$ such that $\{\Delta ^{-1}(y_n)\}\rightarrow z$. We conclude that $\Delta (z)=y$.	Therefore $\Delta (S_{M_1\oplus_\infty M_2})\supseteq S_N$.
\end{proof}

\begin{lemma}\label{separablec_0} Let $\{X_n \}_{n\in \NN }$ be a family of  Banach spaces and let $A$ be a separable subset of $Z_0$. Let $Y$ be a Banach space, and let $\Delta : S_{Z_0}\to S_Y$ be a surjective isometry. Then there exist separable subspaces $M_n \subseteq X_n$  ($n\in \NN$),  and $N\subseteq Y$ such that $A\subseteq \bigoplus_{n\in \NN  }^{c_0} M_n $  and $\Delta (S_{\bigoplus_{n\in \NN }^{c_0} M_n })= S_N$.
\end{lemma}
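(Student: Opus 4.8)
The plan is to imitate, essentially verbatim, the bootstrapping construction used in the proof of Lemma \ref{separable}, replacing the two-fold $\ell_\infty$-sum by the countable $c_0$-sum and paying attention to the one extra point where the $c_0$-geometry is used. First I would set $A_1:=A$ and fix a countable dense subset $\{a_m:m\in\NN\}$ of $A_1$; writing $a_m=(a_m(n))_{n\in\NN}$ with $a_m(n)\in X_n$, for each $n\in\NN$ I put $B_{(n,1)}:=\overline{Lin}\{a_m(n):m\in\NN\}\subseteq X_n$, a separable closed subspace. Since every $a_m$ lies in $Z_0$ its coordinates tend to $0$, so $a_m\in\bigoplus_{n\in\NN}^{c_0}B_{(n,1)}$ for every $m$; as this $c_0$-sum is a closed subspace of $Z_0$, we get $A\subseteq\overline{A_1}\subseteq\bigoplus_{n\in\NN}^{c_0}B_{(n,1)}$.

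Next I would run the induction exactly as in Lemma \ref{separable}. A countable $c_0$-sum of separable spaces is separable, so $\bigoplus_{n\in\NN}^{c_0}B_{(n,1)}$ is separable and hence $\Delta(S_{\bigoplus_{n\in\NN}^{c_0}B_{(n,1)}})$ is a separable subset of $S_Y$; set $N_1:=\overline{Lin}(\Delta(S_{\bigoplus_{n\in\NN}^{c_0}B_{(n,1)}}))$, a separable closed subspace of $Y$. Then $S_{N_1}$ is separable, so $A_2:=\Delta^{-1}(S_{N_1})$ is a separable subset of $S_{Z_0}$, and as in the first step one finds separable closed subspaces $B_{(n,2)}\subseteq X_n$ with $B_{(n,1)}\subseteq B_{(n,2)}$ for all $n$ and $A_2\subseteq\bigoplus_{n\in\NN}^{c_0}B_{(n,2)}$. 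Iterating, I obtain for every $k\geq 1$ separable closed subspaces $B_{(n,k)}\subseteq X_n$ ($n\in\NN$) and $N_k\subseteq Y$ with $B_{(n,k)}\subseteq B_{(n,k+1)}$, $N_k=\overline{Lin}(\Delta(S_{\bigoplus_{n\in\NN}^{c_0}B_{(n,k)}}))$, and $\Delta^{-1}(S_{N_k})\subseteq\bigoplus_{n\in\NN}^{c_0}B_{(n,k+1)}$; note that $B_{(n,k)}\subseteq B_{(n,k+1)}$ forces $N_k\subseteq N_{k+1}$. Finally I put $M_n:=\overline{\bigcup_{k\in\NN}B_{(n,k)}}\subseteq X_n$ (separable) and $N:=\overline{\bigcup_{k\in\NN}N_k}\subseteq Y$ (separable); clearly $A\subseteq\bigoplus_{n\in\NN}^{c_0}M_n$.

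It remains to prove $\Delta(S_{\bigoplus_{n\in\NN}^{c_0}M_n})=S_N$, and the only nontrivial ingredient is the density of $\bigcup_{k\in\NN}\bigoplus_{n\in\NN}^{c_0}B_{(n,k)}$ in $\bigoplus_{n\in\NN}^{c_0}M_n$: given $z$ in the latter and $\varepsilon>0$, choose $n_0$ with $\|z(n)\|<\varepsilon$ for $n>n_0$, approximate each $z(n)$ ($n\leq n_0$) within $\varepsilon$ by some $w_n\in B_{(n,k_n)}$, and with $\ell:=\max_{n\leq n_0}k_n$ the finitely supported vector $w$ defined by $w(n):=w_n$ for $n\leq n_0$ and $w(n):=0$ for $n>n_0$ lies in $\bigoplus_{n\in\NN}^{c_0}B_{(n,\ell)}$ with $\|z-w\|\leq\varepsilon$; this is the single spot where the coordinatewise decay defining the $c_0$-sum is essential. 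Consequently $\bigcup_{k}S_{\bigoplus_{n\in\NN}^{c_0}B_{(n,k)}}$ is dense in $S_{\bigoplus_{n\in\NN}^{c_0}M_n}$ (normalise the approximants), and since $\Delta(S_{\bigoplus_{n\in\NN}^{c_0}B_{(n,k)}})\subseteq S_{N_k}\subseteq S_N$, the continuity of $\Delta$ together with the closedness of $N$ yields $\Delta(S_{\bigoplus_{n\in\NN}^{c_0}M_n})\subseteq S_N$. For the reverse inclusion I would copy the argument of Lemma \ref{separable}: any norm-one $y\in\bigcup_k N_k$ lies in some $S_{N_k}$, so $\Delta^{-1}(y)\in\Delta^{-1}(S_{N_k})\subseteq\bigoplus_{n\in\NN}^{c_0}B_{(n,k+1)}\subseteq\bigoplus_{n\in\NN}^{c_0}M_n$; and for general $y\in S_N$ I pick norm-one $y_j\in\bigcup_k N_k$ with $y_j\to y$, observe that $\{\Delta^{-1}(y_j)\}$ is Cauchy in $S_{\bigoplus_{n\in\NN}^{c_0}M_n}$ because $\Delta$ is an isometry between the spheres, pass to its limit $z$ in the closed subspace $\bigoplus_{n\in\NN}^{c_0}M_n$, and conclude $\Delta(z)=y$ by continuity. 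The main obstacle beyond routine bookkeeping is precisely the density claim above; the nested-subspace construction is otherwise identical in spirit to Lemma \ref{separable}.
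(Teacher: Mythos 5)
Your proposal is correct and follows essentially the same route as the paper: reduce to separable coordinate subspaces, observe that a countable $c_0$-sum of separable spaces is separable via finitely supported approximants, and then run the nested bootstrapping construction of Lemma \ref{separable}. The paper compresses the final steps into ``we conclude the proof arguing as in the proof of the previous lemma,'' whereas you make explicit the one genuinely new point, namely the density of $\bigcup_{k}\bigoplus_{n}^{c_0}B_{(n,k)}$ in $\bigoplus_{n}^{c_0}M_n$, which is a correct and welcome clarification rather than a departure.
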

\begin{proof} Since $A$ is a separable subset of $\bigoplus_{n\in \NN }^{c_0} X_n $, there exists a countable subset of $A$ (say $\{ a_k:  k\in \NN \}$) dense in $A$. For each $n\in \NN$, $P_n(\{ a_k:   k\in \NN \})$ is a countable subset of $X_n$, and hence $$B_n:=\overline{Lin}P_n(\{ a_k: k\in \NN \})$$ is a separable subspace of $X_n$. It is  clear that $A\subseteq \bigoplus_{n\in \NN }^{c_0} B_n $. For $n\in \NN$, let us fix  a dense countable subset $D_n$ of $B_n$ containing $0$. Then the subset $\mathcal F$ of $\bigoplus_{n\in \NN }^{c_0} B_n $ consisting of all finitely  supported vectors whose $n$-th  coordinate is in $D_n$ for each $n$, is  countable and dense in $\bigoplus_{n\in \NN }^{c_0} B_n $. Therefore  $\bigoplus_{n\in \NN }^{c_0} B_n $ is a separable subspace of $\bigoplus_{n\in \NN }^{c_0} X_n $. We conclude the proof arguing as in the proof of the previous lemma.
\end{proof}

Now, we are ready to prove the following.

\begin{theorem} \label{main} Let $\{X_n \}_{ n \in \NN }$  be a  family of strictly convex Banach spaces such that dim$(X_n )\geq 2 $ for all $n $. Then the Banach spaces $Z_n:=X_{1} \oplus _\infty \cdots \oplus _\infty X_{n} $ $($$n \geq 2$$)$ and $\bigoplus_{n \in \NN }^{c_0} X_n $ satisfy the Mazur--Ulam property.
\end{theorem}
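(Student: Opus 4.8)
The strategy is to reduce to the already-established Theorem~\ref{suma-Mazur-Ulam} by a separable-reduction argument, which removes the need for any smoothness hypothesis. Let $Y$ be a Banach space and $\Delta : S_Z \to S_Y$ a surjective isometry, where $Z$ is one of the spaces $Z_n = X_1\oplus_\infty\cdots\oplus_\infty X_n$ or $\bigoplus_{n\in\NN}^{c_0}X_n$. By \cite[Lemma 2.1]{FangWang06} it suffices to prove that $\Vert \Delta(z)+\lambda\Delta(\tilde z)\Vert = \Vert z+\lambda\tilde z\Vert$ for all $\lambda>0$ and $z,\tilde z\in S_Z$; so fix such $z,\tilde z,\lambda$. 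First I would apply Lemma~\ref{separable} (for the finite $\ell_\infty$-sum, iterated, or directly its natural extension to $n$ summands) respectively Lemma~\ref{separablec_0} with $A:=\{z,\tilde z\}$, obtaining separable subspaces $M_1\subseteq X_1,\dots$ and $N\subseteq Y$ with $z,\tilde z\in W:=M_1\oplus_\infty\cdots\oplus_\infty M_n$ (resp.\ $W:=\bigoplus_{n}^{c_0}M_n$) and $\Delta(S_W)=S_N$. Then $\Delta|_{S_W}:S_W\to S_N$ is itself a surjective isometry.

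The second step is to verify that $W$ satisfies the hypotheses of Theorem~\ref{suma-Mazur-Ulam}. Each $M_j$ is a closed subspace of the strictly convex space $X_j$, hence strictly convex; and $\dim(M_j)\geq 2$ — here I must be mildly careful, since a priori a separable subspace produced by the lemma could be one-dimensional, so when invoking Lemmas~\ref{separable}/\ref{separablec_0} I would enlarge each $M_j$ by adjoining one extra vector to guarantee $\dim(M_j)\geq 2$ (the lemmas' construction tolerates starting from any separable set, so this is harmless). Crucially, each $M_j$ is a \emph{separable} Banach space, so by \cite[Proposition 9.4.3]{Ro} the set $\it{Sm}(M_j)$ of smooth points is norm dense in $S_{M_j}$. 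Thus $W$ is an $\ell_\infty$-sum (for $Z_n$) or a countable $c_0$-sum (for $\bigoplus^{c_0}X_n$) of strictly convex spaces of dimension $\geq 2$ with dense smooth points, and $\vert\Gamma\vert = n\geq 2$ (resp.\ $\vert\Gamma\vert=\aleph_0\geq 2$), so Theorem~\ref{suma-Mazur-Ulam} applies to $W$: the restriction $\Delta|_{S_W}$ extends to a surjective real-linear isometry $T:W\to N$.

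The final step is routine: since $T$ is real-linear and isometric and agrees with $\Delta$ on $S_W$, we get $\Vert \Delta(z)+\lambda\Delta(\tilde z)\Vert = \Vert T(z)+\lambda T(\tilde z)\Vert = \Vert T(z+\lambda\tilde z)\Vert = \Vert z+\lambda\tilde z\Vert$, using that $z,\tilde z\in W$. As $z,\tilde z\in S_Z$ and $\lambda>0$ were arbitrary, \cite[Lemma 2.1]{FangWang06} yields that $\Delta$ extends to a surjective real-linear isometry $Z\to Y$, which is what we wanted; uniqueness of the extension is automatic since $S_Z$ spans $Z$. I expect the only genuinely delicate point to be the bookkeeping in the separable reduction — ensuring that the subspaces $M_j$ are simultaneously (i) large enough to contain $z,\tilde z$ and to have dimension $\geq 2$, (ii) compatible across the $n$ summands, and (iii) matched with $N$ so that $\Delta$ genuinely restricts to a \emph{surjective} isometry $S_W\to S_N$; this is exactly the content of Lemmas~\ref{separable} and~\ref{separablec_0}, whose back-and-forth inductive construction must be invoked verbatim. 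Everything else is a direct appeal to Theorem~\ref{suma-Mazur-Ulam} and basic permanence of strict convexity and separability under passing to subspaces.
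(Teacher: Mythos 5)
Your proposal is correct and follows essentially the same route as the paper: separable reduction via Lemmas \ref{separable} and \ref{separablec_0}, permanence of strict convexity in subspaces, density of smooth points from Mazur's theorem for separable spaces, and then Theorem \ref{suma-Mazur-Ulam} combined with \cite[Lemma 2.1]{FangWang06}. Your extra remark about enlarging the $M_j$ to guarantee $\dim(M_j)\geq 2$ addresses a point the paper passes over silently, but it does not change the argument.
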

\begin{proof}  Let  $Y$ be a Banach space, and let $\Delta : S_{Z_n}\to S_Y$ be a surjective isometry. Let us take $z,\tilde{z}\in S_{Z_n}$ and $\lambda >0$. By Lemma \ref{separable}, there exist separable subspaces $M_i\subseteq X_i$ for all $i=1, \cdots ,n$ and $N\subseteq Y$ such that $z, \tilde{z}\in  M_1\oplus_\infty \cdots \oplus_\infty M_n$ and $$\Delta ( S_{M_1\oplus_\infty \cdots \oplus_\infty M_n)} =S_N.$$ Since $X_i$ is a strictly convex  with dim$(X_i)\geq 2$, we  have that  $M_i$ is  a strictly convex  with dim$(M_i)\geq 2$ for all $i=1, \ldots ,n$. According to Mazur's theorem \cite[Proposition 9.4.3]{Ro}, $\it{Sm}(M_1\oplus_\infty \cdots \oplus_\infty M_n)$  is norm dense in $S_{M_1\oplus_\infty \cdots \oplus_\infty M_n }$. By Theorem \ref{suma-Mazur-Ulam}, the restriction of	$\Delta$ to $S_{M_1\oplus_\infty \cdots \oplus_\infty M_n}$, regarded as a mapping onto $S_N$,  admits a unique extension to a surjective real-linear isometry $\Phi$ from $M_1\oplus_\infty \cdots \oplus_\infty M_n$ to $N$, and so $$\Vert \Delta (z) +\lambda \Delta (\tilde{z})\Vert = \Vert \Phi (z +\lambda \tilde{z})\Vert =\Vert z+\lambda \tilde{z} \Vert .$$
Since  $z,\tilde{z}\in S_{Z_n}$ and $\lambda >0$ are arbitrary, it follows from \cite[Lemma 2.1]{FangWang06} that $\Delta$ can be extended to a real-linear isometry from $Z_n$ to $Y$. By replacing Lemma \ref{separable} with Lemma \ref{separablec_0}, the proof in the case $\bigoplus_{\gamma \in \Gamma}^{c_0} X_\gamma $ is similar.	
\end{proof}

Using the fact that $c_0(\Gamma )$ admits a strictly convex equivalent norm, one can realize that  a Banach space $X$  is strictly convex renormable as soon as there exists a  one-to-one bounded linear operator $T:X\to c_0(\Gamma )$. In this case, the equivalent  strictly convex norm in $X$ is given by $$\vert \vert \vert x\vert \vert \vert 	 = \Vert x\Vert + \Vert T(x)\Vert \ \ (x\in X).$$  This fact implies that  every weakly compactly generated space (in particular every separable space) admits a strictly convex equivalent renorming (see \cite{DevGodZiz93}, \cite{MoOrTrZi} and \cite{ArMe}).  Let us note that there are strictly convex renormable Banach spaces which cannot be linearly and continuously imbedded into $c_0(\Gamma )$ for any $\Gamma$ \cite{DaLi}.

\begin{corollary} \label{renorm} Let $X$ be a strictly convex  Banach space. Then $X$ has a equivalent norm with the Mazur--Ulam property.
\end{corollary}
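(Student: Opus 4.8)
The plan is to deduce the statement from Theorem \ref{main} by producing, on a suitable renorming of $X$, a decomposition of the type required there. Recall that throughout the paper $X$ is assumed to be a real Banach space.

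If $\dim X$ is finite, say $\dim X=m\ge 1$, I would transport to $X$, via any linear isomorphism, the norm of $\ell_\infty^m=\ell_\infty(\{1,\ldots,m\},\RR)$; this is an equivalent norm on $X$, and $\ell_\infty(\Gamma,\RR)$ has the Mazur--Ulam property for every set $\Gamma$ (in particular for a finite one) by \cite{Liu2007}, so $X$ is done in this case.

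If $\dim X=\infty$, I would fix a closed subspace $H\subseteq X$ of codimension two. Being finite-codimensional, $H$ is complemented, so $X=H\oplus F$ as a topological direct sum for some closed subspace $F$ with $\dim F=2$. As a subspace of a strictly convex space, $H$ is strictly convex, and it is infinite-dimensional; I would endow $F$ with a Euclidean norm — strictly convex and of dimension $2$ — and define on $X$ the equivalent norm $\Vert h+f\Vert:=\max\{\Vert h\Vert_{H},\Vert f\Vert_{F}\}$ for $h\in H$ and $f\in F$ (this is equivalent to the original norm, since the projection of $X$ onto $H$ along $F$ is bounded and $F$ is finite-dimensional). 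With this norm, $X$ becomes isometrically isomorphic to the $\ell_\infty$-sum $H\oplus_\infty F$ of two strictly convex Banach spaces of dimension $\ge 2$, which has the Mazur--Ulam property by Theorem \ref{main} (case $n=2$). This concludes the argument.

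There is no genuine obstacle in this deduction: the whole substance lies in Theorem \ref{main} (and in \cite{Liu2007} for the finite-dimensional case), and the corollary is essentially a repackaging. The only points to check are routine, namely that a finite-codimensional closed subspace of a Banach space is complemented, that subspaces of strictly convex spaces are strictly convex, and that the max-norm displayed above is equivalent to the original norm of $X$.
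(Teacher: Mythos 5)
Your proposal is correct and follows essentially the same route as the paper: in the infinite-dimensional case both arguments split off a complemented finite-dimensional subspace of dimension $2$, note that both summands inherit strict convexity, pass to the equivalent $\ell_\infty$-sum norm, and invoke Theorem \ref{main}. The only cosmetic differences are that the paper settles the finite-dimensional case by citing the Mazur--Ulam property of Hilbert spaces rather than of $\ell_\infty^m$, and that your replacement of the norm on the two-dimensional factor by a Euclidean one is unnecessary, since that factor is already strictly convex in the inherited norm.
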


\begin{proof} The result is clear for finite-dimensional Banach spaces since every Hilbert space   satisfies the Mazur--Ulam property \cite[Proposition 4.5]{BeCuFePe}. So that we can assume that $X$ is an infinite-dimensional  Banach space. Let $M$ be a finite dimensional subspace of $X$ with dim$(M)\geq  2$. Since $M$ is a complemented subspace, there exists a closed subspace $N$ of $X$ such that  $X=M\oplus N$. Since $X$ is strictly convex, then $M,N$ are strictly convex. We conclude the proof  applying	Theorem \ref{main} to $X$ with the equivalent norm given by $X=M\oplus_\infty N$.
\end{proof}

\begin{corollary}\label{renorm1} Let $X$ be a  Banach space such that $X^*$ is strictly convex. Then $X$ has a equivalent norm whose dual norm has the Mazur--Ulam property.
\end{corollary}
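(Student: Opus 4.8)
The plan is to follow the argument of Corollary~\ref{renorm}, i.e.\ to put on $X^*$ an equivalent norm of the form $E\oplus_\infty F$ with $E,F$ strictly convex and of dimension $\geq 2$ so that Theorem~\ref{main} applies, but now with the additional constraint that this $\ell_\infty$-renorming be a \emph{dual} norm. The finite-dimensional case is immediate exactly as in Corollary~\ref{renorm}: renorm $X$ to be a Hilbert space, so that the dual norm turns $X^*$ into a Hilbert space, which has the Mazur--Ulam property by \cite[Proposition~4.5]{BeCuFePe}. Hence from now on I assume $X$ is infinite-dimensional.

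I would choose linearly independent functionals $f_1,f_2\in X^*$ and, using a biorthogonal system, points $x_1,x_2\in X$ with $f_i(x_j)=\delta_{ij}$. Set $M:=\mathrm{span}\{f_1,f_2\}$, $N:=\{f\in X^*:f(x_1)=f(x_2)=0\}$, and $Pf:=f(x_1)f_1+f(x_2)f_2$. Then $P$ is a bounded linear projection of $X^*$ onto $M$ along $N$, and the crucial observation is that $P$ and $I-P$ are weak$^*$-continuous, since each $f\mapsto f(x_j)$ is; moreover $M$ is weak$^*$-closed, being finite-dimensional, and $N$ is weak$^*$-closed, being the intersection of the kernels of the weak$^*$-continuous functionals $f\mapsto f(x_j)$. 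Now I would equip $X^*$ with the equivalent norm $\rho(f):=\max\{\|Pf\|_2,\|(I-P)f\|_{X^*}\}$, where $\|\cdot\|_2$ is a Euclidean norm on the $2$-dimensional space $M$.

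The key step is to verify that $\rho$ is a dual norm. Its closed unit ball equals $P^{-1}(B_2)\cap(I-P)^{-1}(B_{X^*}\cap N)$, where $B_2$ is the Euclidean ball of $M$; since $P$ and $I-P$ are weak$^*$-continuous, $B_2$ is weak$^*$-closed (on the finite-dimensional $M$ the weak$^*$ and norm topologies coincide), and $B_{X^*}\cap N$ is weak$^*$-closed, this intersection is weak$^*$-closed. As a norm on a dual space is a dual norm precisely when its closed unit ball is weak$^*$-closed, one has $\rho=\|\cdot\|^*$ for the equivalent norm $\|\cdot\|$ on $X$ whose closed unit ball is the polar in $X$ of the $\rho$-unit ball. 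Finally, $(X^*,\rho)$ is isometrically $(M,\|\cdot\|_2)\oplus_\infty(N,\|\cdot\|_{X^*})$, a finite $\ell_\infty$-sum of two strictly convex Banach spaces of dimension $\geq 2$: the first summand is Euclidean, hence strictly convex and $2$-dimensional, and $N$ is a closed subspace of the strictly convex space $X^*$, hence strictly convex, with $\dim N=\infty$ because $X$ is infinite-dimensional. By Theorem~\ref{main}, $(X^*,\|\cdot\|^*)=(X^*,\rho)$ has the Mazur--Ulam property, which is what we want.

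The one genuine obstacle, beyond these routine verifications, is the weak$^*$-compatibility of the splitting $X^*=M\oplus N$: this is exactly what makes the $\ell_\infty$-renorming of $X^*$ arise as the dual of a renorming of $X$, and it is secured by taking $M$ finite-dimensional and $N$ an intersection of kernels of evaluations, so that the associated projections are weak$^*$-continuous.
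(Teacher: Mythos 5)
Your argument is correct, but it travels in the opposite direction to the paper's. The paper renorms the \emph{predual}: it writes $X=M\oplus N$ with $M$ a finite-dimensional subspace of $X$ of dimension at least $2$, puts the $\ell_1$-sum norm on $X$, and then simply reads off that the dual norm is the $\ell_\infty$-sum norm on $M^*\oplus_\infty N^*$; the fact that the new norm on $X^*$ is a dual norm thus comes for free from the duality between $\ell_1$- and $\ell_\infty$-sums, and Theorem \ref{main} is applied to $M^*\oplus_\infty N^*$. You instead renorm $X^*$ directly, via a weak$^*$-compatible splitting $X^*=M\oplus N$ with $M$ a two-dimensional subspace and $N$ an intersection of kernels of evaluations, and you then have to check by hand that the new unit ball is weak$^*$-closed in order to conclude that the new norm is a dual norm. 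Your route costs a little more duality bookkeeping, but it buys a genuine simplification elsewhere: your two summands are a Euclidean plane and a closed subspace of $(X^*,\|\cdot\|)$, so their strict convexity is inherited trivially, whereas in the paper's version the summands $M^*$ and $N^*$ are isometrically \emph{quotients} of $X^*$ (by $M^\perp$ and $N^\perp$ respectively), and their strict convexity needs a small extra argument that the paper leaves implicit ($M$ is finite-dimensional and smooth because $X$ is smooth, and $N^\perp$ is finite-dimensional, hence proximinal, so the quotient $X^*/N^\perp$ inherits strict convexity). Both proofs then invoke Theorem \ref{main} for a two-summand $\ell_\infty$-sum in exactly the same way, and both dispose of the finite-dimensional case by a Hilbertian renorming.
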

\begin{proof} We can assume that $X$ is infinite-dimensional. Let $M$ be a finite dimensional subspace of $X$ with dim$(M)\geq  2$. We consider the equivalen norm in $X$ given by $X=M\oplus_1 N$. Then $X^*=M^*\oplus_\infty N^*$. Since $N^*$ and $M^*$ are strictly convex, it follows from Theorem \ref{main} that $X^*$ has the Mazur--Ulam property.
\end{proof}

We recall that, if $X$ is a  weakly compactly generated Banach space, then there exists $\Vert .\Vert _1$ and $\Vert .\Vert _2$ equivalent norms in $X$ such that $(X,\Vert .\Vert _1)$ and $(X,\Vert .\Vert _2)^*$  are strictely convex \cite[Theorems VI.2.1 and VII.1.6]{DevGodZiz93}.

\begin{corollary}\label{renorm2} Let $X$ be a  weakly compactly generated Banach space. Then there exists $\Vert .\Vert _1$ and $\Vert .\Vert _2$ equivalent norms in $X$ such that $(X,\Vert .\Vert _1)$ and $(X,\Vert .\Vert _2)^*$ satisfy the Mazur--Ulam property.
\end{corollary}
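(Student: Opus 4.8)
The plan is to deduce Corollary \ref{renorm2} directly from Corollaries \ref{renorm} and \ref{renorm1}, using the classical renorming theory for weakly compactly generated (WCG) spaces already quoted in the excerpt. Recall that if $X$ is WCG then, by \cite[Theorems VI.2.1 and VII.1.6]{DevGodZiz93}, there is an equivalent norm $\Vert \cdot \Vert_1$ on $X$ making $(X, \Vert \cdot \Vert_1)$ strictly convex, and an equivalent norm $\Vert \cdot \Vert_2$ on $X$ making the dual $(X, \Vert \cdot \Vert_2)^*$ strictly convex. These are exactly the hypotheses feeding Corollary \ref{renorm} and Corollary \ref{renorm1}, respectively.

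First I would apply Corollary \ref{renorm} to the strictly convex space $(X, \Vert \cdot \Vert_1)$: this produces a further equivalent norm on $X$ (which, abusing notation, we still call $\Vert \cdot \Vert_1$, since an equivalent norm of an equivalent norm is an equivalent norm of the original $X$) satisfying the Mazur--Ulam property. Secondly I would apply Corollary \ref{renorm1} to $(X, \Vert \cdot \Vert_2)$, whose dual is strictly convex; this yields an equivalent norm on $X$ (again renamed $\Vert \cdot \Vert_2$) whose dual norm has the Mazur--Ulam property. Composing equivalent renormings is harmless, so both $\Vert \cdot \Vert_1$ and $\Vert \cdot \Vert_2$ are genuine equivalent norms on the original $X$, and we are done.

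Since this is a two-line chaining of earlier corollaries, there is essentially no obstacle; the only point deserving a word of care is the observation that ``an equivalent norm of an equivalent norm is equivalent to the original norm,'' so that the renorming produced by Corollary \ref{renorm} applied to $(X,\Vert\cdot\Vert_1)$ is still an admissible answer for $X$ itself, and likewise for the dual side via Corollary \ref{renorm1}. One could alternatively note that the trilinear dependence on the WCG hypothesis is already absorbed into those two corollaries, so no new use of the WCG structure is needed here beyond invoking \cite[Theorems VI.2.1 and VII.1.6]{DevGodZiz93}. I expect the author's proof to be a single short paragraph of exactly this form.
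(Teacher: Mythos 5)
Your proof is correct and is essentially identical to the paper's (implicit) argument: the corollary is stated right after the recalled renorming theorems of \cite[Theorems VI.2.1 and VII.1.6]{DevGodZiz93} precisely so that it follows by feeding the strictly convex renorming into Corollary \ref{renorm} and the renorming with strictly convex dual into Corollary \ref{renorm1}, exactly as you do.
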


\medskip

\section{The Mazur--Ulam property in $C_0(K,X)$}

\medskip

Let $X$ be a Banach space, and let $K$ be  a  locally compact Hausdorff space. Given  $p\in S_{C_0(K,X)}$ and a compact open  subset $\mathcal{O}$ of $K$ such that $\Vert p(t)\Vert =1$ for all $ t\in \mathcal{O}$, we define  \emph{the intersection face of $p$ relatively to $\mathcal{O}$} by 	$$ \mathcal{F}(p,\mathcal{O}) := \{ z\in C_0(K,X) :  z(t) = p(t) \ \mbox{for all}\  t\in \mathcal{O}\}.$$

\begin{proposition}\label{C(K)-strictly convex} Let $X$ be a strictly convex  Banach space such that dim$(X) \geq 2 $ and the set $\it{Sm}(X)$  is norm dense in $S_{X }$, let $K$ be a  locally compact Hausdorff space with $\vert K \vert \geq 2$. Let $Y$ be a Banach space, and let $\Delta : S_{C_0(K,X)}\to S_Y$ be a surjective isometry such that $\Delta \arrowvert_{\mathcal{F}(p,\mathcal{O})}$ is an affine map where  $p \in S_{C_0(K,X)}$ and  $\mathcal{O}$ is a compact open subset of $K$ with $\Vert p(t)\Vert=1$ for every $t\in \mathcal{O}$ and $K\setminus \mathcal{O}\neq \emptyset$. Then  the equality $$\psi \Delta (p) = \varphi_{x} \otimes \delta_{t} (p) $$ is true for all  $t\in K\setminus \mathcal{O}$, $x\in \it{Sm}(X)$ and $\psi\in {\rm supp}(t,x)$.
\end{proposition}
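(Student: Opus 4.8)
The plan is to follow the scheme of Proposition~\ref{c-0-suma-estric-convex-norming}, with the clopen set $\mathcal{O}$ playing the role of a single coordinate and evaluation at a point $t_0\in K\setminus\mathcal{O}$ playing the role of a canonical projection. First I would fix $t_0\in K\setminus\mathcal{O}$, $x_0\in\it{Sm}(X)$ and $\psi\in{\rm supp}(t_0,x_0)$. Since $\mathcal{O}$ is clopen, $C_0(K,X)=C(\mathcal{O},X)\oplus_\infty V$, where $V:=\{z\in C_0(K,X):z|_{\mathcal{O}}=0\}$ is isometric to $C_0(K\setminus\mathcal{O},X)$; with $p_1:=p\cdot\chi_{\mathcal{O}}\in S_{C_0(K,X)}$ one has $\mathcal{F}(p,\mathcal{O})\cap B_{C_0(K,X)}=p_1+B_V\subseteq S_{C_0(K,X)}$. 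Because $\Delta$ is affine on $\mathcal{F}(p,\mathcal{O})$, the map $w\mapsto \psi\Delta(p_1+w)-\psi\Delta(p_1)$ is affine on $B_V$ with values in $[-1,1]$; I would extend it by the Hahn--Banach theorem to $\varphi'\in B_{V^*}$ and define $\widetilde\varphi\in B_{C_0(K,X)^*}$ by $\widetilde\varphi(z):=\varphi'\big(z-z\chi_{\mathcal{O}}\big)$. As $p_1(t_0)=0$, Lemma~\ref{l tech 2-bis}(5) gives $\psi\Delta(p_1)=0$, whence $\widetilde\varphi(z)=\psi\Delta\big(p_1+(z-z\chi_{\mathcal{O}})\big)$ for every $z\in B_{C_0(K,X)}$; moreover, for any Urysohn bump $h\in C_0(K)$ at $t_0$ supported off $\mathcal{O}$ with $0\le h\le1$, $h(t_0)=1$, the element $z_0:=h\otimes x_0$ lies in $S_{C_0(K,X)}\cap B_V$, so $p_1+z_0\in A(t_0,x_0)$ and $\widetilde\varphi(z_0)=\psi\Delta(p_1+z_0)=1$.

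The crux is then the identity $\widetilde\varphi(z)=\varphi_{x_0}\big(z(t_0)\big)$ for all $z\in C_0(K,X)$; once this is available, evaluating at $z=p$ and using $p=p_1+(p-p\chi_{\mathcal{O}})$ gives $\psi\Delta(p)=\widetilde\varphi(p)=\varphi_{x_0}\otimes\delta_{t_0}(p)$, which is the assertion. To prove the identity, by homogeneity I may take $z\in B_{C_0(K,X)}$; fix $\varepsilon>0$. G\^{a}teaux differentiability of the norm of $X$ at $x_0$, via \cite[Theorem I.1.4]{DevGodZiz93}, yields $\rho\in(0,\varepsilon/8)$ such that $\phi\in B_{X^*}$ with $\phi(x_0)>1-\rho^{1/2}$ implies $|\phi(z(t_0))-\varphi_{x_0}(z(t_0))|<\varepsilon/4$. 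Using continuity of $z$ and $t_0\notin\mathcal{O}$, I choose an open $U\ni t_0$ with compact closure, $\overline U\cap\mathcal{O}=\emptyset$ and $\|z(s)-z(t_0)\|<\rho$ for all $s\in U$, and a Urysohn function $h$ as above with $\{h\neq0\}\subseteq U$; set $z_0:=h\otimes x_0$, so $\widetilde\varphi(z_0)=1$. Since $\it{Sm}(X)$ is norm dense in $S_X$, the set $\{\varphi_x:x\in\it{Sm}(X)\}$ is norming for $X$, hence $\{\varphi_x\otimes\delta_t:t\in K,\ x\in\it{Sm}(X)\}$ is a symmetric norming subset of $B_{C_0(K,X)^*}$, and therefore (Hahn--Banach) its weak$^*$-closed convex hull is all of $B_{C_0(K,X)^*}$. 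Thus $\widetilde\varphi$ can be approximated within $\rho$, simultaneously at $z_0$ and at $z$, by a convex combination $\Lambda=\sum_{i=1}^n\lambda_i\,\varphi_{x_i}\otimes\delta_{t_i}$ with $x_i\in\it{Sm}(X)$.

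The remaining step is the bookkeeping estimate. Testing $\Lambda$ against $z_0$ gives $\big|1-\sum_i\lambda_i h(t_i)\varphi_{x_i}(x_0)\big|<\rho$; since every summand lies in $[0,1]$, the index set $\{i: h(t_i)\varphi_{x_i}(x_0)\le 1-\rho^{1/2}\}$ has total $\lambda$-weight $<\rho^{1/2}$, whereas for each remaining index one has simultaneously $h(t_i)>1-\rho^{1/2}$ (so $t_i\in U$, hence $\|z(t_i)-z(t_0)\|<\rho$) and $\varphi_{x_i}(x_0)>1-\rho^{1/2}$ (so $|\varphi_{x_i}(z(t_0))-\varphi_{x_0}(z(t_0))|<\varepsilon/4$). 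Splitting $\Lambda(z)=\sum_i\lambda_i\varphi_{x_i}(z(t_i))$ along this partition, bounding the low-weight part by $\rho^{1/2}$, and comparing each remaining term $\varphi_{x_i}(z(t_i))$ with $\varphi_{x_0}(z(t_0))$ through the two inequalities just recorded, one gets $|\Lambda(z)-\varphi_{x_0}(z(t_0))|<\varepsilon/4+C\rho^{1/2}$ for an absolute constant $C$; together with $|\widetilde\varphi(z)-\Lambda(z)|<\rho$ and the freedom to shrink $\rho$, letting $\varepsilon\to0$ gives $\widetilde\varphi(z)=\varphi_{x_0}(z(t_0))$. The main obstacle relative to the $c_0$-sum case is precisely that $\delta_{t_0}\otimes x_0$ is not itself a member of $C_0(K,X)$: the test element $z_0$ must be a bump function tuned both to $\rho$ and to the oscillation of $z$ near $t_0$, which is why the estimate must run at two scales ($\rho$ and $\rho^{1/2}$) and why continuity of $z$ is needed to transfer the values $z(t_i)$ at the nearby points $t_i$ back to $z(t_0)$.
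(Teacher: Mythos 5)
Your proof follows the paper's argument essentially verbatim: the same $\ell_\infty$-decomposition of $C_0(K,X)$ along the compact open set $\mathcal{O}$, the same construction of $\widetilde{\varphi}$ from the affine restriction of $\Delta$ combined with Lemma \ref{l tech 2-bis}(5), and the same approximation of $\widetilde{\varphi}$ by weak$^*$-convex combinations of the functionals $\varphi_x\otimes\delta_t$ tested against a bump $h\otimes x_0$ supported where $z$ oscillates by less than $\rho$. The only divergence is in the concluding bookkeeping, where the paper simply refers back to Proposition \ref{c-0-suma-estric-convex-norming}; your two-scale threshold argument at $\rho^{1/2}$ is a correct (and arguably more explicit) way to carry out that step, since here the points $t_i$ are only close to $t_0$ rather than equal to it.
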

\begin{proof}  Let us fix  $t_0\in K\setminus \mathcal{O}$, $x_0\in \it{Sm}(X)$ and $\psi\in {\rm supp}(t_0,x_0)$. For $j=1,2$, we set $p_j = \chi_{_{\mathcal{O}_j}} p$  where $\mathcal{O}_1=K\setminus \mathcal{O}$ and  $\mathcal{O}_2=\mathcal{O}$. It follows that
	$$ \mathcal{F}(p_2,\mathcal{O}_2) = \{ a\in C_0(K,X) : a(t) = p(t), \ \forall t\in \mathcal{O}_2\} = p_2 + B_{C_0(\mathcal{O}_1, X)}.$$ Since $\Delta \arrowvert_{\mathcal{F}(p,\mathcal{O}_2)}$  is affine, the mapping  $\Delta_{p_2}$ from $B_{C_0(\mathcal{O}_1, X)}$ to  $\Delta (\mathcal{F}(p,\mathcal{O}_2))$ defined by  $\Delta_{p_2}(g):=\Delta (p_2+g)$ for every  $g\in C_0(\mathcal{O}_1, X)$ is  affine. Now, the mapping $\varphi$ from $B_{C_0(\mathcal{O}_1, X)}$ to  $\mathbb{R}$ defined  by $$\varphi (g) := \psi \Delta (g+ p_2) - \psi\Delta(p_2)$$ is  affine with $\vert \varphi (g) \vert \leq 1$ for every $g\in B_{C_0(\mathcal{O}_1, X)}$. We can regard $\varphi$ as the restriction to $B_{C_0(\mathcal{O}_1, X)}$  of a  functional $\varphi '$ in  $B_{C_0(\mathcal{O}_1,X)^*}$. Let $\widetilde{\varphi}$ be the function from $C_0(K,X)$ to $\RR$ defined by $\widetilde{\varphi} (g) := \varphi '(g|_{ \mathcal{O}_1})$ for every $g\in C_0(K,X)$. It follows that $\widetilde{\varphi}$ belongs to $B_{C_0(K,X)^*}$. By Lemma \ref{l tech 2-bis} $(5)$, $\psi\Delta(p_2)=0$ since $p_2 (t_0) =0$ and $\psi\in {\rm supp}(t_0,x_0)$, and hence  $\widetilde{\varphi} (g) = \psi \Delta (g|_{\mathcal{O}_1}+ p_2)$ for all $g\in C_0(K, X)$. We set $f_0 = x_0\otimes \chi_{_{\mathcal{O}_1}}$. It is clear that $f_0+ p_2\in A(t_0,x_0)$, and since $\psi\in {\rm supp}(t_0,x_0)$, we get $\widetilde{\varphi} (f_0) = \psi\Delta (f_0 + p_2)=1,$ and so $\widetilde{\varphi}\in S_{C_0(K,X)^*}$.
	
	Since $\psi\Delta(p) = \widetilde{\varphi} (p) $, the proof of the proposition is concluded if we prove that
	$$\widetilde{\varphi} (p) = \varphi_{x_0}\otimes \delta_{t_0} (p).$$  Actually, we shall show more, namely that \begin{equation}\label{eq widetildevarphi and varphixootimesto coincide} \widetilde{\varphi} (g) = \varphi_{x_0}\otimes \delta_{t_0} (g) \ \mbox{for all} \ g\in C_0(K,X).
	\end{equation}
	Let us fix  $\varepsilon>0$ and $g\in C_0(K,X)$.
	Since  $x_0\in \it{Sm}(X)$, it follows from \cite[Theorem I.1.4]{DevGodZiz93} that there exists $0<\rho<\frac{\varepsilon}{4}$ with the following property: \begin{equation}\label{eq G-diff at x0}  |\phi (g(t_0)) -\varphi_{x_0} (g(t_0)) |< \frac{\varepsilon}{4}\ \mbox{ for every}\  \phi\in B_{X^*}\  \mbox{with}\   \phi (x_0)>1-\rho .
	\end{equation} Since  $\it{Sm}(X)$ is norm dense in $S_{X }$, the set  $$\{ \varphi_x \otimes \delta_{t} : t\in K,\ \ x\in \it{Sm}(X) \}\subseteq C_0(K,X)^*$$ is a norming set, and therefore, by the Hahn-Banach theorem, \begin{equation}\label{eq weak* convex hull} \overline{co}^{w^*} \{ \varphi_x \otimes \delta_{t} : t\in K,\ \ x\in \it{Sm}(X) \} =B_{C_0(K,X)^*}.
	\end{equation} Let $\mathcal{O}_3$ denote the open set $ \{t\in \mathcal{O}_1 : \|g(t)-g(t_0)\|< {\rho} \}.$ By Urysohn's lemma, there exists  $h_0\in C_0(K)$ such that $0\leq h_0\leq 1$, $h_0(t_0)=1$ and $h_0|_{K\backslash\mathcal{O}_3}=0$. We set $\widetilde{f}_0 = x_0\otimes h_0$. By (\ref{eq weak* convex hull}), there exist $\lambda_1,\ldots,\lambda_n\in (0,1]$ with $\sum_{i=1}^{n} \lambda_i$, $t_1,\ldots,t_n\in K$, and $x_1,\ldots, x_n\in \it{Sm}(X)$ such that
	\begin{equation}\label{eq (2)} \left|\widetilde{\varphi} (v) - \! \left(\!\sum_{i=1}^{n} \lambda_i\ \varphi_{x_i}\otimes \delta_{t_i} \!\right)\! (v) \right| <{\rho}, \ \ v\in \{ \widetilde{f}_0,g\} .
	\end{equation}
	It is clear that $\widetilde{f}_{0}\in A(t_0,x_0)$, and  since $\widetilde{f}_{0}|_{\mathcal{O}_2}=0$, we get $\widetilde{f}_{0}+p_2\in A(t_0,x_0)$, and so  $$\widetilde{\varphi} (\widetilde{f}_0) =\psi\Delta (\widetilde{f}_0 + p_2)=1.$$
Now we follow  the argument in the proof of the Proposition \ref{c-0-suma-estric-convex-norming}, and we check that (\ref{eq widetildevarphi and varphixootimesto coincide}) is true.
\end{proof}

\begin{proposition}\label{Afin-Face} Let $X$ be a strictly convex  Banach space such that dim$(X) \geq 2 $, let $K$ be a totally disconnected locally compact Hausdorff space with $\vert K \vert \geq 2$. Let $Y$ be a Banach space, and let $\Delta : S_{C_0(K,X)}\to S_Y$ be a surjective isometry. Then the restriction of	$\Delta$ to $\mathcal{F}(p,\mathcal{O})$, regarded as a mapping onto $\Delta (\mathcal{F}(p,\mathcal{O}))$, is an affine map for every $p \in S_{C_0(K,X)}$  and every   compact open subset $\mathcal{O}$ of $K$ with $\Vert p(t)\Vert =1$ for every $t \in \mathcal{O}$ and $K\setminus \mathcal{O}\neq \emptyset$.
\end{proposition}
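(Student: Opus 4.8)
The plan is to adapt the argument that, within the proof of Proposition~\ref{afin} for $Z=Z_0$, shows that the restriction of $\Delta$ to $A(\gamma_0,z(\gamma_0))$ is affine, replacing the finite projections $P_{\Gamma_0}$ by restrictions to compact open subsets of $K$. Since $\mathcal{O}$ is compact and open in the Hausdorff space $K$ it is clopen, so $C_0(K,X)$ is isometrically identified with $C(\mathcal{O},X)\oplus_\infty C_0(K\setminus\mathcal{O},X)$. Writing $p_{\mathcal{O}}$ for the element of $C_0(K,X)$ that coincides with $p$ on $\mathcal{O}$ and vanishes off $\mathcal{O}$, we get $\mathcal{F}(p,\mathcal{O})=p_{\mathcal{O}}+B_{C_0(K\setminus\mathcal{O},X)}$; in particular $\mathcal{F}(p,\mathcal{O})$ is a convex subset of $S_{C_0(K,X)}$ contained in $A(t_0,p(t_0))$ for every $t_0\in\mathcal{O}$. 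Fixing $z_1,z_2\in\mathcal{F}(p,\mathcal{O})$ and $\alpha\in(0,1)$, the goal is the identity $\Delta(\alpha z_1+(1-\alpha)z_2)=\alpha\Delta(z_1)+(1-\alpha)\Delta(z_2)$.

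First I would perform an approximation. Given $\varepsilon>0$, the functions $z_i-p_{\mathcal{O}}$ ($i=1,2$) are supported on $K\setminus\mathcal{O}$ and vanish at infinity, so there is a compact $C\subseteq K\setminus\mathcal{O}$ with $\Vert (z_i-p_{\mathcal{O}})(t)\Vert<\varepsilon$ for $t\in(K\setminus\mathcal{O})\setminus C$. As $K$ is totally disconnected and locally compact, its compact open sets form a basis and every compact set sits inside a compact open one; hence we may choose a compact open $\mathcal{O}'\supseteq\mathcal{O}\cup C$ and, after enlarging it by a compact open neighbourhood of a point of $K\setminus\mathcal{O}$, assume $\mathcal{O}_1:=\mathcal{O}'\setminus\mathcal{O}\neq\varnothing$; note $\mathcal{O}_1$ is compact and open, so ${\rm dim}(\mathcal{O}_1)=0$. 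Putting $z_i':=z_i$ on $\mathcal{O}'$ and $z_i':=0$ off $\mathcal{O}'$, one has $z_i'\in\mathcal{F}(p,\mathcal{O})$, $z_i'$ vanishes off $\mathcal{O}'$, and $\Vert z_i-z_i'\Vert\leq\varepsilon$. Set $\mathcal{F}_{\mathcal{O}'}(p,\mathcal{O}):=\{z\in\mathcal{F}(p,\mathcal{O}):z|_{K\setminus\mathcal{O}'}=0\}=p_{\mathcal{O}}+B_{C(\mathcal{O}_1,X)}$, a set containing $z_1',z_2'$.

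The crucial step—and the one I expect to be the main obstacle—is to prove that $\Delta(\mathcal{F}_{\mathcal{O}'}(p,\mathcal{O}))$ is a convex subset of $S_Y$. Take $a,b\in\Delta(\mathcal{F}_{\mathcal{O}'}(p,\mathcal{O}))$, $\beta\in(0,1)$, $u:=\Delta^{-1}(a)$, $v:=\Delta^{-1}(b)$. For each $t_0\in\mathcal{O}$ choose $\psi_{t_0}\in{\rm supp}(t_0,p(t_0))$ (non-empty by Lemma~\ref{l existence of support functionals for the image of a face-bis}), so that $\Delta(A(t_0,p(t_0)))=\psi_{t_0}^{-1}(\{1\})\cap B_Y$. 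Since $u,v\in\mathcal{F}(p,\mathcal{O})\subseteq A(t_0,p(t_0))$ we have $\psi_{t_0}(a)=\psi_{t_0}(b)=1$, whence $\psi_{t_0}(\beta a+(1-\beta)b)=1$ and $\beta a+(1-\beta)b\in\Delta(A(t_0,p(t_0)))$ for every $t_0\in\mathcal{O}$; consequently $w:=\Delta^{-1}(\beta a+(1-\beta)b)$ lies in $\bigcap_{t_0\in\mathcal{O}}A(t_0,p(t_0))=\mathcal{F}(p,\mathcal{O})$. On the other hand, for $t\in K\setminus\mathcal{O}'$ we have $u(t)=v(t)=0$, so Lemma~\ref{l tech 2-bis}$(5)$ gives $\psi\Delta(u)=\psi\Delta(v)=0$ for every $x\in S_X$ and every $\psi\in{\rm supp}(t,x)$, hence $\psi\Delta(w)=\beta\psi\Delta(u)+(1-\beta)\psi\Delta(v)=0$ for all such $\psi$; now Lemma~\ref{l tech 2-bis}$(6)$, where the total disconnectedness of $K$ is essential, yields $w(t)=0$. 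Thus $w$ vanishes off $\mathcal{O}'$, so $w\in\mathcal{F}_{\mathcal{O}'}(p,\mathcal{O})$ and $\beta a+(1-\beta)b=\Delta(w)\in\Delta(\mathcal{F}_{\mathcal{O}'}(p,\mathcal{O}))$.

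Finally, $\mathcal{O}_1$ is a non-empty compact Hausdorff space with ${\rm dim}(\mathcal{O}_1)=0$, and $X$ is strictly convex with ${\rm dim}(X)\geq2$, so Proposition~\ref{C(K,X)-Strong Mankiewicz property} applies (its condition $(1)$ if $X$ is infinite dimensional, its condition $(2)$ if $X$ is $n$-dimensional with $n\geq2$, since $0\leq n-1$): every convex body in $C(\mathcal{O}_1,X)$ has the strong Mankiewicz property. Hence the surjective isometry $g\mapsto\Delta(p_{\mathcal{O}}+g)$ from $B_{C(\mathcal{O}_1,X)}$ onto the convex set $\Delta(\mathcal{F}_{\mathcal{O}'}(p,\mathcal{O}))$ is affine, so $\Delta|_{\mathcal{F}_{\mathcal{O}'}(p,\mathcal{O})}$ is affine and in particular $\Delta(\alpha z_1'+(1-\alpha)z_2')=\alpha\Delta(z_1')+(1-\alpha)\Delta(z_2')$. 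Since $\Delta$ is isometric and $\Vert z_i-z_i'\Vert\leq\varepsilon$, the triangle inequality gives $\Vert\Delta(\alpha z_1+(1-\alpha)z_2)-\alpha\Delta(z_1)-(1-\alpha)\Delta(z_2)\Vert\leq2\varepsilon$, and letting $\varepsilon\to0$ completes the proof. The delicate points are the two halves of the convexity claim: that the preimage $w$ agrees with $p$ on the whole of $\mathcal{O}$ (obtained by using support functionals at every point of $\mathcal{O}$ simultaneously) and that it vanishes off $\mathcal{O}'$ (obtained from Lemma~\ref{l tech 2-bis}$(6)$, which requires $K$ totally disconnected).
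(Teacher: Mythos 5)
Your proposal is correct and follows essentially the same route as the paper's proof: approximate $z_1,z_2$ by elements supported on a compact open set containing $\mathcal{O}$, show the image of the resulting smaller face is convex via Lemma \ref{l tech 2-bis}$(5)$ and $(6)$, invoke the strong Mankiewicz property of $B_{C(\mathcal{O}_1,X)}$ from Proposition \ref{C(K,X)-Strong Mankiewicz property} and Theorem \ref{Mori-Ozawa}, and finish with the triangle inequality as $\varepsilon\to 0$. The only deviation is that you establish $w\in\mathcal{F}(p,\mathcal{O})$ directly by evaluating support functionals $\psi_{t_0}\in{\rm supp}(t_0,p(t_0))$ at every $t_0\in\mathcal{O}$, whereas the paper obtains the convexity of $\Delta(\mathcal{F}(p,\mathcal{O}))$ by citing Lemma 8 of Mori--Ozawa on intersection faces; your argument is a valid self-contained substitute.
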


\begin{proof} Let $p$ be in $S_{C_0(K,X)}$,   and let $\mathcal{O}$ be a compact open subset with $\Vert p(t)\Vert =1$ for every $t \in \mathcal{O}$ and $K\setminus \mathcal{O}\neq \emptyset$. By \cite[Lemma 8]{MoriOza2018}, $\Delta(\mathcal{F}(p,\mathcal{O}))$ is an intersection face, and hence a non-empty convex subset of $S_Y$, since $\displaystyle \mathcal{F}(p, \mathcal{O}) = \bigcap_{t\in \mathcal{O}} A(t, p(t))$ is a intersection face.	Fix $z_1,z_2\in \mathcal{F}(p,\mathcal{O}) $, $\alpha \in (0,1)$, and $\varepsilon >0$. There exist  $a_1, a_2\in C_{0}(K\setminus \mathcal{O},X)$ with $\Vert (1-\chi_\mathcal{O})z_i-a_i\Vert <\varepsilon$  and $\overline{\{t\in K: a_i(t)\neq 0 \}} $ is a non-empty compact subset of $K\setminus \mathcal{O}$ ($i=1,2$). For $i=1,2$, we define $z_i'(t):= z_i(t)$ if $t\in \mathcal{O}  $ and $z_i'(t):=a_i(t)$ if $t\in K\setminus \mathcal{O}$. Then $\Vert z_i-z_i'\Vert <\varepsilon$ and $\overline{\{t\in K: z_i'(t)\neq 0 \}}$ is a compact subset of $K$ ($i=1,2$). Since $K$  has a basis of its  topology consisting of compact open sets, there exist $\mathcal{O}_1,\cdots ,\mathcal{O}_n$  compact open subsets of $K$ with $$\mathcal{O} \varsubsetneq\cup_{i=1,2}\overline{\{t\in K: z_i'(t)\neq 0 \}}\subseteq \cup_{i=1}^n\mathcal{O}_i.$$ Now, $K_1:=\cup_{i=1}^n\mathcal{O}_i$ is a totally disconnected  compact Hausdorff subset of $K$, $z_1',z_2'\in C(K_1,X)$,  and  $C_0(K,X)=C(K_1,X)\oplus_\infty C_0(K_2,X)$. We set 	$$ \mathcal{F}_{K_1}(p,\mathcal{O}) := \{ a\in C(K_1,X) : a(t) = p(t), \ \mbox{for all}\  t\in \mathcal{O}\}.$$ It is clear that $z_1',z_2'\in \mathcal{F}_{K_1}(p,\mathcal{O}) \subseteq \mathcal{F}(p,\mathcal{O})$ and that $\mathcal{F}_{K_1}(p,\mathcal{O})=\chi_\mathcal{O}p+B_{C(K_1\setminus \mathcal{O},X)}$.
	
Let us  verify that the set $\Delta (\mathcal{F}_{K_1}(p,\mathcal{O}))$ is a convex subset of $S_Y$. 	
Let $a,b\in \Delta (\mathcal{F}_{K_1}(p,\mathcal{O}))\subseteq \Delta (\mathcal{F} (p,\mathcal{O}))$ and $\beta \in (0,1)$. Then $\beta a+(1-\beta )b\in \Delta (\mathcal{F}(p,\mathcal{O}))$. Take $u,v\in \mathcal{F}_{K_1}(p,\mathcal{O})$ and $w\in \mathcal{F} (p,\mathcal{O})$ such that $\Delta (u)=a$, $\Delta (v)=b$ and $\Delta (w)=\beta a+(1-\beta )b$. By Lemma \ref{l tech 2-bis} (5), for all $t \in K \setminus K_1$ and $x\in S_{X}$, we have $\psi \Delta(u) =0 =\psi \Delta(v)$ for every $\psi \in {\rm supp}(t ,x)$. This implies that $\psi \Delta(w)=0$ for every $\psi \in {\rm supp}(t ,x)$. By Lemma \ref{l tech 2-bis}(6), $w(t)=0$ for  $t \in K \setminus K_1$. It follows that $w\in  \mathcal{F}_{K_1}(p,\mathcal{O})$, and hence $\Delta (\mathcal{F}_{K_1}(p,\mathcal{O}))$ is a convex subset of $S_Y$, as desired. Furthermore, the mapping $\Delta_{\chi_\mathcal{O}p} : B_{C(K_1\setminus \mathcal{O},X)}\to \Delta (\mathcal{F}_{K_1}(p,\mathcal{O}))$ defined by $\Delta_{\chi_{\mathcal{O}} p}(z):=\Delta (\chi_\mathcal{O}p+z)$ is a surjective isometry.  Therefore, by  Proposition \ref{Strong Mankiewicz property} ($K_1\setminus \mathcal{O}$ is totally disconnected  compact Hausdorff space) and Theorem \ref{Mori-Ozawa},  $\Delta_{\chi_\mathcal{O}p}$ is an affine map, and so the restriction of	$\Delta$ to $\mathcal{F}_{K_1}(p,\mathcal{O})$, regarded as a mapping onto $\Delta (\mathcal{F}_{K_1}(p,\mathcal{O}))$, is affine. This fact, together with  the inequalities $$\Vert \Delta (\alpha z_1+(1-\alpha )z_2)- \Delta (\alpha z'_1+(1-\alpha )z'_2)\Vert <\varepsilon $$ and $$\Vert \alpha \Delta (z_1)+(1-\alpha)\Delta (z_2) -\alpha \Delta (z'_1)-(1-\alpha)\Delta (z'_2)\Vert <\varepsilon ,$$ allows us to obtain $$\Vert \Delta (\alpha z_1+(1-\alpha )z_2)-\alpha \Delta (z_1)-(1-\alpha)\Delta (z_2)  \Vert <2\varepsilon .$$ 	By letting $\varepsilon \to 0$, we realize that  the restriction of	$\Delta$ to $\mathcal{F}_{K_1}(p,\mathcal{O})$, regarded as a mapping onto $\Delta (\mathcal{F}_{K_1}(p,\mathcal{O}))$, is affine.\end{proof}

\begin{proposition}\label{C(K)-disconex} Let $X$ be a strictly convex  Banach space such that dim$(X) \geq 2 $ and such that the set $\it{Sm}(X)$  is norm dense in $S_{X }$, and let $K$ be a totally disconnected  compact Hausdorff space with $\vert K \vert \geq 2$. Then $C(K,X)$ satisfies the Mazur--Ulam property.
\end{proposition}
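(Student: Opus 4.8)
The plan is to imitate the proof of Theorem~\ref{suma-Mazur-Ulam}, letting the intersection faces $\mathcal{F}(p,\mathcal{O})$ play the role there played by the faces $A(\gamma ,x)$. Let $Y$ be a Banach space and let $\Delta\colon S_{C(K,X)}\to S_Y$ be a surjective isometry. Fix $t_0\in K$, $x_0\in \it{Sm}(X)$ and $\psi\in {\rm supp}(t_0,x_0)$; the core of the argument will be the pointwise identity
\[ \psi\Delta (z)=\varphi_{x_0}\otimes \delta_{t_0}(z)=\varphi_{x_0}(z(t_0))\qquad\text{for every }z\in S_{C(K,X)}. \]
Granting it, the proof finishes as in Theorem~\ref{suma-Mazur-Ulam}: since $\it{Sm}(X)$ is norm dense in $S_X$, the set $\{\varphi_x\otimes \delta_t:t\in K,\ x\in \it{Sm}(X)\}$ is norming for $C(K,X)$, hence $\bigcup\{{\rm supp}(t,x):t\in K,\ x\in \it{Sm}(X)\}$ is a symmetric norming subset of $B_{Y^*}$, and the displayed identity then forces $\Vert \Delta(z)+\lambda\Delta(\tilde z)\Vert=\Vert z+\lambda\tilde z\Vert$ for all $z,\tilde z\in S_{C(K,X)}$ and all $\lambda>0$; by \cite[Lemma 2.1]{FangWang06}, $\Delta$ extends to a surjective real-linear isometry from $C(K,X)$ onto $Y$.

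To establish the identity, I would first record the instance in which it comes almost for free. Because $K$ is totally disconnected, Proposition~\ref{Afin-Face} says that $\Delta$ is affine on every intersection face $\mathcal{F}(p,\mathcal{O})$ with $\mathcal{O}$ compact open, $\Vert p(t)\Vert=1$ on $\mathcal{O}$ and $K\setminus\mathcal{O}\neq\emptyset$; fed into Proposition~\ref{C(K)-strictly convex}, this yields $\psi\Delta(p)=\varphi_{x_0}\otimes \delta_{t_0}(p)$ for every such $p$ with the extra property $t_0\notin\mathcal{O}$. Now let $z\in S_{C(K,X)}$ have $\Vert z(t_1)\Vert=1$ for some $t_1\neq t_0$; this already covers every $z$ with $\Vert z(t_0)\Vert<1$, since the norm of $z$ is attained on the compact space $K$. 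Given $0<\varepsilon<1$, the clopen basis of $K$ lets us choose a compact open $\mathcal{O}$ with $t_1\in\mathcal{O}$, $t_0\notin\mathcal{O}$ and $\Vert z(t)\Vert>1-\varepsilon$ on $\mathcal{O}$; setting $p:=z$ on $K\setminus\mathcal{O}$ and $p:=z/\Vert z\Vert$ on $\mathcal{O}$ (a continuous function, as $\mathcal{O}$ is clopen and $z$ is bounded below on $\mathcal{O}$), we get $p\in S_{C(K,X)}$ with $\Vert p(t)\Vert=1$ on $\mathcal{O}$, $p(t_0)=z(t_0)$ and $\Vert p-z\Vert=\sup_{t\in\mathcal{O}}\bigl|1-\Vert z(t)\Vert\bigr|<\varepsilon$. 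Applying the observation above to $p$ and using that $\Delta$ preserves distances on the sphere, $|\psi\Delta(z)-\varphi_{x_0}(z(t_0))|=|\psi\Delta(z)-\psi\Delta(p)|\leq\Vert z-p\Vert<\varepsilon$, so letting $\varepsilon\to0$ proves the identity for this $z$.

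There remain the $z\in S_{C(K,X)}$ whose norm is attained only at $t_0$. If $t_0$ is not an isolated point of $K$, then by continuity $\Vert z(t_1)\Vert$ is as close to $1$ as we wish for suitable $t_1\neq t_0$ near $t_0$, and the construction of the previous paragraph produces, for each $\varepsilon>0$, a $z'\in S_{C(K,X)}$ with $z'(t_0)=z(t_0)$, $\Vert z'-z\Vert<\varepsilon$ and a norm-one value at some point $\neq t_0$; the already-settled case applied to $z'$, together with $|\psi\Delta(z)-\psi\Delta(z')|\leq\Vert z-z'\Vert$, gives the identity for $z$ after $\varepsilon\to0$. If $t_0$ is isolated, then $\{t_0\}$ is compact open, so $A(t_0,z(t_0))=\mathcal{F}(z,\{t_0\})$ is an intersection face on which $\Delta$ is affine by Proposition~\ref{Afin-Face}; by Proposition~\ref{C(K,X)-Strong Mankiewicz property} (whose hypotheses hold since $\dim(K)=0$) we may write $z=\sum_{i=1}^{n}\lambda_i p_i$ with $\lambda_i>0$, $\sum_i\lambda_i=1$ and each $p_i\in Ext(B_{C(K,X)})$, hence $\Vert p_i(t)\Vert=1$ for all $t\in K$ by \cite[Remark 1.5]{AronLohm} (as $X$ is strictly convex); strict convexity of $X$ and $\Vert z(t_0)\Vert=1$ force $p_i(t_0)=z(t_0)$, so all $p_i$ lie in $A(t_0,z(t_0))$ and therefore $\Delta(z)=\sum_i\lambda_i\Delta(p_i)$; finally each $p_i$, being unimodular, has a norm-one value at some point of $K$ different from $t_0$ (because $\vert K\vert\geq2$), so the case treated in the previous paragraph gives $\psi\Delta(p_i)=\varphi_{x_0}(p_i(t_0))=\varphi_{x_0}(z(t_0))$, whence $\psi\Delta(z)=\sum_i\lambda_i\varphi_{x_0}(z(t_0))=\varphi_{x_0}(z(t_0))$. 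This proves the identity in all cases.

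The main obstacle, as the layout makes plain, is the passage from functions that are unimodular on a compact open set avoiding $t_0$ --- where Proposition~\ref{C(K)-strictly convex} applies verbatim --- to an arbitrary element of $S_{C(K,X)}$; the delicate subcase is when the norm of $z$ is attained only at an \emph{isolated} point $t_0$, for then no small perturbation of $z$ can create a norm-one value elsewhere, and this is exactly where the extreme-point description of $B_{C(K,X)}$ supplied by Proposition~\ref{C(K,X)-Strong Mankiewicz property} together with the strict convexity of $X$ are needed. Everything else --- affineness on intersection faces and the support-functional calculus of Lemma~\ref{l tech 2-bis} --- is provided by the material preceding the statement.
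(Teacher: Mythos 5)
Your argument is correct, and it runs on the same ingredients as the paper's proof: the norming-set reduction and \cite[Lemma 2.1]{FangWang06} at the end, Proposition \ref{C(K)-strictly convex} applied to functions unimodular on a compact open set, affineness of $\Delta$ on intersection faces, and the extreme-point description of $B_{C(K,X)}$ from Proposition \ref{C(K,X)-Strong Mankiewicz property}. Where you diverge is in how an arbitrary $z$ is reduced to the unimodular case. The paper does not fix the support point first: given $g$, it chooses any $t_0$ with $\Vert g(t_0)\Vert=1$, flattens $g$ to $h=\chi_{\mathcal{O}_1}g(t_0)+\chi_{\mathcal{O}_2}g$ on a small clopen neighbourhood $\mathcal{O}_1$ of that point, writes $h$ as a convex combination of extreme points $p_i$ (which all agree with $g(t_0)$ on $\mathcal{O}_1$ by strict convexity), and uses affineness of $\Delta$ on $\mathcal{F}(h,\mathcal{O}_1)$ to push $\psi\Delta$ through the combination; since each $\chi_{\mathcal{O}_1}g(t_0)+p_i|_{\mathcal{O}_2}$ is unimodular on all of $K$, Proposition \ref{C(K)-strictly convex} then applies for every support functional at once and no case distinction is needed. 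You instead fix $(t_0,x_0,\psi)$ and split into three cases according to whether the norm of $z$ is attained away from $t_0$, only at a non-isolated $t_0$, or only at an isolated $t_0$, using the pointwise renormalization trick for the first two and reserving the extreme-point decomposition for the last. Your route makes visible exactly where the convex-hull-of-extreme-points property is indispensable (an isolated support point that is the unique norm-attaining point, where no small perturbation can create a norm-one value elsewhere, as you correctly observe); the paper's route buys a single uniform argument with no trichotomy. Both are complete proofs.
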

\begin{proof}   We recall  that  the subset  $\{ \varphi_x \otimes \delta_{t} : t\in K,\ \ x\in \it{Sm}(X) \}$ of $ C(K,X)^*$ is a norming set. Now, keeping in mind the proof of Theorem \ref{suma-Mazur-Ulam}, the proof shall be concluded by showing that the equality $$\psi \Delta (g) = \varphi_{x} \otimes \delta_t (g) $$ holds for all $\psi\in {\rm supp}(t ,x)$, $t\in K,\ \ x\in \it{Sm}(X)$,  and $g\in S_{C(K,X)}$. Let $\varepsilon >0$, and take $g\in S_{C(K,X)}$ and $t_0\in K$ with $\Vert g(t_0)\Vert =1$. There exists a clopen subset $\mathcal{O}_1$ of $K$ such that $$t_0\in \mathcal{O}_1\subseteq \{ s\in K :\Vert g(s)-g(t_0)\Vert <\frac{\varepsilon }{2} \} \ \mbox{and}\ K\setminus \mathcal{O}_1\neq \emptyset.$$ Define $h\in S_{C(K,X)}$ by $h:=\chi_{_{\mathcal{O}_1}}g(t_0)+\chi_{_{\mathcal{O}_2}}g$ where $\mathcal{O}_2=K\setminus \mathcal{O}_1$. Then  $\Vert h-g\Vert <\varepsilon$. By Proposition \ref{C(K,X)-Strong Mankiewicz property}, there exist $\lambda_1,\cdots,\lambda_n$ in $(0,1]$ with $\sum_{i=1}^{n} \lambda_i=1$, and $p_1,\cdots,p_n$ in $Ext(B_{C(K,X)})$  such that \linebreak $h=\sum_{i=1}^{n} \lambda_i p_i$. Therefore, $g(t_0)=\sum_{i=1}^{n} \lambda_i p_i(t)$  for all $t\in \mathcal{O}_1$, and hence $g(t_0)=p_i(t)$  for all $t\in \mathcal{O}_1$ and $i\in \{ 1, \ldots ,n\}$, since $g(t_0)$ is a extreme point of $B_X$. For each $i\in \{ 1, \ldots ,n\}$, we denote $p_i^j:=p_i\rvert_{\mathcal{O}_j}$ ($j=1,2$). Then $$h=\sum_{i=1}^{n} \lambda_i (\chi_{_{\mathcal{O}_1}}g(t_0)+p_i^2)=\chi_{_{\mathcal{O}_1}}g(t_0) +\sum_{i=1}^{n} \lambda_i p_i^2.$$ We recall that $$ \mathcal{F}(h, \mathcal{O}_1)  = \chi_{\mathcal{O}_1}g(t_0) + B_{C(\mathcal{O}_2, X)}$$  is an intersection face,  by \cite[Lemma 8]{MoriOza2018}, $\Delta(\mathcal{F}(h, \mathcal{O}_1 )$ is  a non-empty convex subset of $S_Y$. Furthermore, the mapping $$\Delta_{h} : B_{C(\mathcal{O}_2, X)}\to \Delta(\mathcal{F}(h,\mathcal{O}_1))$$ defined by $\Delta_{h}(z):=\Delta (\chi_{_{\mathcal{O}_1}}g(t_0)+z)$ for $z\in C(\mathcal{O}_2, X)$ is a surjective isometry. By  Proposition \ref{C(K,X)-Strong Mankiewicz property} and Theorem \ref{Mori-Ozawa},  $\Delta_{h}$ is  affine. Then $\Delta|_{\mathcal{F}(h,\mathcal{O}_1)}$ is an affine map, and so  $\Delta (h)=\sum_{i=1}^{n} \lambda_i \Delta (\chi_{\mathcal{O}_1}g(t_0)+p_i^2)$. For all $x\in \it{Sm}(X)$, $t\in K$, and $\psi\in {\rm supp}(t ,x)$, we have that $\psi \Delta (h)=\sum_{i=1}^{n} \lambda_i \psi \Delta (\chi_{\mathcal{O}_1}g(t_0)+p_i^2)$.  Now, note that $\Vert (\chi_{\mathcal{O}_1}g(t_0)+p_i^2)(t)\Vert =1$ for all $t\in K$ and $i\in \{ 1, \cdots ,n\}$, and hence, by Proposition \ref{C(K)-strictly convex},  for all $x\in \it{Sm}(X)$, $t\in K$, and $\psi\in {\rm supp}(t ,x)$, it follows $$\psi \Delta (\chi_{\mathcal{O}_1}g(t_0)+p_i^2) = \varphi_{x} \otimes \delta_t (\chi_{\mathcal{O}_1}g(t_0)+p_i^2) ,$$ and  hence
$$\psi \Delta (h) = \varphi_{x} \otimes \delta_t (h). $$ By letting $\varepsilon \to 0$, we realize that  the equality $$\psi \Delta (g) = \varphi_{x} \otimes \delta_t (g) $$ is true for all $x\in \it{Sm}(X)$, $t\in K$,  $\psi\in {\rm supp}(t ,x)$, and $g\in S_{C(K,X)}$.\end{proof}

We can now establish the main result of this section.

\begin{theorem}\label{C(K)-disconex-non-smooth-localy}  Let $X$ be a strictly convex  Banach space such that dim$(X) \geq 2 $ and such that the set $\it{Sm}(X)$  is norm dense in $S_{X }$, and let $K$ be a totally disconnected  locally compact Hausdorff space with $\vert K \vert \geq 2$. Then $C_0(K,X)$ satisfies the Mazur--Ulam property.
\end{theorem}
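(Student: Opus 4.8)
The plan is to mirror the proof of Proposition \ref{C(K)-disconex} (which is precisely the compact case of the present statement), the essential new difficulty being that when $K$ is non-compact the unit ball of $C_0(K,X)$ is in general not the convex hull of its extreme points, so an arbitrary element of $S_{C_0(K,X)}$ need not be a finite convex combination of norm-one valued functions. Fix a Banach space $Y$ and a surjective isometry $\Delta\colon S_{C_0(K,X)}\to S_Y$. First I would record the \emph{basic identity}: combining Proposition \ref{Afin-Face} (which says $\Delta\arrowvert_{\mathcal F(p,\mathcal O)}$ is affine for every $p\in S_{C_0(K,X)}$ and every compact open $\mathcal O\subseteq K$ with $\|p(t)\|=1$ on $\mathcal O$ and $K\setminus\mathcal O\neq\emptyset$) with Proposition \ref{C(K)-strictly convex}, one obtains
$$\psi\Delta(p)=\varphi_x\otimes\delta_t(p)$$
for all such $p,\mathcal O$, all $t\in K\setminus\mathcal O$, all $x\in\it{Sm}(X)$ and all $\psi\in{\rm supp}(t,x)$.

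The core of the argument is to upgrade this to: $\psi\Delta(g)=\varphi_{x_0}\otimes\delta_{t_0}(g)$ for every $g\in S_{C_0(K,X)}$, every $t_0\in K$, every $x_0\in\it{Sm}(X)$ and every $\psi\in{\rm supp}(t_0,x_0)$. Fixing $t_0,x_0,\psi$, I would first handle the case $\sup\{\|g(s)\|:s\in K,\,s\neq t_0\}=1$ (this automatically covers every $g$ with $\|g(t_0)\|<1$). Given $\varepsilon>0$, pick $s_1\neq t_0$ with $\|g(s_1)\|>1-\varepsilon$; since $K$ is totally disconnected Hausdorff and has a basis of compact open sets, choose a compact open $\mathcal O$ with $s_1\in\mathcal O$, $t_0\notin\mathcal O$ and $\mathcal O\subseteq\{s:\|g(s)\|>1-\varepsilon\}$. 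Define $g'\in C_0(K,X)$ by $g'(s):=g(s)/\|g(s)\|$ for $s\in\mathcal O$ and $g'(s):=g(s)$ for $s\notin\mathcal O$; since $\mathcal O$ is clopen and compact, $g'\in S_{C_0(K,X)}$ with $\|g'-g\|<\varepsilon$, $g'(t_0)=g(t_0)$, $\|g'(t)\|=1$ on $\mathcal O$ and $t_0\in K\setminus\mathcal O\neq\varnothing$. The basic identity then gives $\psi\Delta(g')=\varphi_{x_0}(g(t_0))$, hence $|\psi\Delta(g)-\varphi_{x_0}(g(t_0))|\le\|\Delta(g)-\Delta(g')\|=\|g-g'\|<\varepsilon$, and letting $\varepsilon\to0$ yields the identity for $g$.

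It remains to dispose of those $g$ with $\sup\{\|g(s)\|:s\neq t_0\}<1$; by continuity of $g$ this forces $\|g(t_0)\|=1$ and $t_0$ to be an isolated point of $K$. Then $\{t_0\}$ is compact open, $C_0(K,X)=X\oplus_\infty W$ with $W:=C_0(K\setminus\{t_0\},X)$, and (as $|K|\ge 2$, so $W$ contains an isometric copy of $X$) $\dim(W)\ge\dim(X)\ge 2$. Applying the folklore lemma of Section~2 to $b:=g\arrowvert_{K\setminus\{t_0\}}\in B_W$, write $b=\frac{1}{2}(b_1+b_2)$ with $b_1,b_2\in S_W$; then $g_i:=\chi_{\{t_0\}}g(t_0)+b_i\in S_{C_0(K,X)}$ satisfies $\sup\{\|g_i(s)\|:s\neq t_0\}=\|b_i\|=1$, so the previous paragraph applies and $\psi\Delta(g_i)=\varphi_{x_0}(g_i(t_0))=\varphi_{x_0}(g(t_0))$ for $i=1,2$. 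Since $g=\frac{1}{2}(g_1+g_2)$ and $g,g_1,g_2$ all lie in the face $A(t_0,g(t_0))=\mathcal F(g,\{t_0\})$, Proposition \ref{Afin-Face} makes $\Delta$ affine there, so $\psi\Delta(g)=\frac{1}{2}(\psi\Delta(g_1)+\psi\Delta(g_2))=\varphi_{x_0}(g(t_0))$.

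To finish, I would argue exactly as at the end of the proof of Theorem \ref{suma-Mazur-Ulam}. The set $\{\varphi_x\otimes\delta_t:t\in K,\,x\in\it{Sm}(X)\}$ is a symmetric norming subset of $C_0(K,X)^*$; each ${\rm supp}(t,x)$ is non-empty by Lemma \ref{l existence of support functionals for the image of a face-bis}, and by Lemma \ref{l tech 2-bis}(2) the set $\bigcup\{{\rm supp}(t,x):t\in K,\,x\in\it{Sm}(X)\}$ is a symmetric norming subset of $Y^*$; hence, using the identity just established, for all $z,\tilde z\in S_{C_0(K,X)}$ and $\lambda>0$,
$$\|\Delta(z)+\lambda\Delta(\tilde z)\|=\sup\{\varphi_x\otimes\delta_t(z)+\lambda\,\varphi_x\otimes\delta_t(\tilde z):t\in K,\,x\in\it{Sm}(X)\}=\|z+\lambda\tilde z\|,$$
and \cite[Lemma 2.1]{FangWang06} then lets $\Delta$ extend to a surjective real-linear isometry of $C_0(K,X)$ onto $Y$. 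I expect the hardest step, and the only place where the non-compact setting really bites, to be the passage from the basic identity to the identity for an arbitrary $g$: the extreme-point/convex-combination device available for compact $K$ has no direct analogue, which is why one needs both the ``normalize on a compact open set away from $t_0$'' perturbation and the separate treatment of the degenerate isolated-point case through the $\oplus_\infty$ splitting and the midpoint lemma.
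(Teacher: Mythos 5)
Your proposal is correct, and its overall architecture (basic identity from Propositions \ref{Afin-Face} and \ref{C(K)-strictly convex}, upgrade to arbitrary $g$, then the norming-set argument and \cite[Lemma 2.1]{FangWang06}) matches the paper's; but the crucial ``upgrade'' step is carried out by a genuinely different device. The paper first reduces to non-compact $K$ (the compact case being Proposition \ref{C(K)-disconex}, which rests on the extreme-point structure of $B_{C(K,X)}$ from Proposition \ref{C(K,X)-Strong Mankiewicz property}); it then replaces $g$ by a function $h$ that is locally constant on a compact open neighbourhood $\mathcal{O}_1$ of a peak point, uses non-compactness to find a far-away compact open set $\mathcal{O}_3$ on which $h$ is small, writes $h$ up to $\varepsilon$ as the midpoint of $p_i:=\chi_{\mathcal{O}_3}(-1)^i x_3+\chi_{K\setminus\mathcal{O}_3}h$, and applies Proposition \ref{C(K)-strictly convex} to each $p_i$ (choosing $\mathcal{O}_1$ or $\mathcal{O}_3$ according to the position of $t$) together with affineness of $\Delta$ on $\mathcal{F}(h,\mathcal{O}_1)$. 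You instead perturb $g$ only once, by normalizing it on a small compact open set around a near-peak point $s_1\neq t_0$, which puts $g'$ directly within the scope of the basic identity for the fixed $t_0$; the only residual case, $\sup_{s\neq t_0}\|g(s)\|<1$, forces $t_0$ to be isolated and is absorbed by the $\oplus_\infty$ splitting $C_0(K,X)=X\oplus_\infty C_0(K\setminus\{t_0\},X)$, the folklore midpoint lemma, and affineness on $\mathcal{F}(g,\{t_0\})$. Your route is more economical: it needs no dichotomy on compactness of $K$ (so it subsumes Proposition \ref{C(K)-disconex} without invoking the convex hull of extreme points of $B_{C(K,X)}$), and it proves the identity pointwise in $(t_0,x_0,\psi)$ rather than for all $t$ at once; the price is the separate, though elementary, isolated-point analysis, which the paper's peak-point perturbation avoids. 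Both arguments are sound; just make sure to take $\varepsilon<1$ so that the normalization $g(s)/\|g(s)\|$ on $\mathcal{O}\subseteq\{s:\|g(s)\|>1-\varepsilon\}$ is well defined.
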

\begin{proof}  We can assume that $K$ is not a compact space, since in other case we can apply Proposition \ref{C(K)-disconex-non-smooth}.	  Let  $Y$ be a Banach space, and let \linebreak $\Delta : S_{C_0(K,X)}\to S_Y$ be a surjective isometry.  We recall  that  the subset  $\{ \varphi_x \otimes \delta_{t} : t\in K,\ \ x\in \it{Sm}(X) \}$ of $C_0(K,X)^*$ is a norming set. Now, keeping in mind the proof of Theorem \ref{suma-Mazur-Ulam}, the proof shall be concluded by showing that the equality $$\psi \Delta (g) = \varphi_{x} \otimes \delta_t (g) $$ is true for all $x\in \it{Sm}(X)$, $t\in K$,  $\psi\in {\rm supp}(t ,x)$, and $g\in S_{C(K,X)}$. Let $0<\varepsilon <\frac{1}{3}$, and take $g\in S_{C_0(K,X)}$. There exists $t_0\in K$ with $\Vert g(t_0)\Vert =1$. Since $K$ is a totally disconnected locally compact Hausdorff  space, there exists a compact open  subset $\mathcal{O}_1$ of $K$ such that $$t_0\in \mathcal{O}_1\subseteq \{ s\in K :\Vert g(s)-g(t)\Vert <\frac{\varepsilon }{2} \} \ \mbox{and}\  K\setminus \mathcal{O}_1\neq \emptyset.$$ Define $h:=\chi_{\mathcal{O}_1}g(t_0)+\chi_{\mathcal{O}_2}g$ where $\mathcal{O}_2=K\setminus \mathcal{O}_1$. We have that $\Vert h-g\Vert <\varepsilon$ and that $h\in S_{C_0(K,X)}$. By Proposition \ref{Afin-Face}, the restriction of	$\Delta$ to $\mathcal{F}(h,\mathcal{O}_1)$, regarded as a mapping onto $\Delta (\mathcal{F}(h,\mathcal{O}_1))$, is an affine map. Since $h\in S_{C_0(K,X)}$ and $K$ is not a compact space,  pick a compact open subset $\mathcal{O}_3$ of $K$ with  $\Vert h(t)\Vert <\varepsilon$ for every $t\in \mathcal{O}_3$ and  $x_3\in S_X$. For $i=1,2$, define the functions $p_i:=\chi_{\mathcal{O}_3} (-1)^ix_3+\chi_{\mathcal{O}_4}h$ where $\mathcal{O}_4=K\setminus \mathcal{O}_3$. Then $\mathcal{O}_3\cap \mathcal{O}_1=\emptyset$, $p_1,p_2\in \mathcal{F}(h,\mathcal{O}_1)$ and $\Vert h-\frac{1}{2}(p_1+p_2)\Vert <\varepsilon$. By Proposition \ref{C(K)-strictly convex}, $$\psi \Delta (p_i) = \varphi_{x} \otimes \delta_t (p_i) $$  for all  $x\in \it{Sm}(X)$, $t\in K$, and $\psi\in {\rm supp}(t ,x)$. Since $\Delta|_{\mathcal{F}(h,\mathcal{O}_1)}$ is an affine map, we have that $$\vert \psi \Delta (h)-\varphi_{x} \otimes \delta_t (h)\vert $$ $$ =\vert \psi \Delta (h)-\frac{1}{2}(\psi \Delta (p_1)+\psi \Delta (p_2))+ \varphi_{x} \otimes \delta_t (\frac{1}{2}(p_1+p_2))  -\varphi_{x} \otimes \delta_t (h) \vert$$
	
$$\leq \vert \psi \Delta (h)-\psi \Delta (\frac{1}{2} (p_1+p_2))\vert + \vert \varphi_{x} \otimes \delta_t (\frac{1}{2}(p_1+p_2))  -\varphi_{x} \otimes \delta_t (h) \vert $$

$$\leq \Vert \Delta (h)- \Delta (\frac{1}{2} (p_1+p_2)) \Vert +\Vert h- \frac{1}{2}(p_1+p_2)\Vert $$ $$=2 \Vert h- \frac{1}{2}(p_1+p_2)\Vert <2\varepsilon . $$

Since $\Vert g-h\Vert <\varepsilon$, $$\vert \psi \Delta (h)-\psi \Delta (g)\vert <\varepsilon$$ and $$\vert \varphi_{x} \otimes \delta_t (h)-\varphi_{x} \otimes \delta_t (g)\vert <\varepsilon .$$ It follows that $\vert \psi \Delta (h)-\varphi_{x} \otimes \delta_t (h)\vert <4\varepsilon .$ 	By letting $\varepsilon \to 0$, we obtain that   $$\psi \Delta (g) = \varphi_{x_0} \otimes P_{\gamma_0} (g)$$ for all  $x\in \it{Sm}(X)$, $t\in K$, and  $\psi\in {\rm supp}(t ,x)$.\end{proof}

We conclude this paper  obtaining a refinement of the previous theorem for a wide range of  locally  compact Hausdorff  spaces. By \cite[Theorem 3.44, p. 92]{AliHit}, the one-point compactification $K^*$ of a noncompact locally compact Hausdorff space $K$ is metrizable if and only if $K$ is second countable. By \cite[Lemma 3.99, p. 125]{AliHit}, $C(K^*)$ is separable if and only if $K^*$ is metrizable. It follows that  a locally compact Hausdorff space $K$ is second countable if and only if $C_0(K)$ is a separable Banach space.

\medskip

\begin{proposition}\label{C(K)-disconex-non-smooth} Let $X$ be a strictly convex  Banach space such that dim$(X) \geq 2 $, and let $K$ be a totally disconnected locally compact Hausdorff space such that $\vert K \vert \geq 2$ and such that $K$ is second countable. Then $C_0(K,X)$ satisfies the Mazur--Ulam property.
\end{proposition}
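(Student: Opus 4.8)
The plan is to mimic the proof of Theorem \ref{main}: I would reduce the statement to the separable situation already settled by Theorem \ref{C(K)-disconex-non-smooth-localy}, where the hypothesis of G\^{a}teaux differentiability is automatic by Mazur's theorem. In that reduction, the role played in Theorem \ref{main} by Lemma \ref{separable} (or Lemma \ref{separablec_0}) is played here by a separable-reduction lemma for $C_0(K,X)$, and this is the only genuinely new ingredient.

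First I would establish the following lemma. \emph{Let $A$ be a separable subset of $C_0(K,X)$ and let $\Delta\colon S_{C_0(K,X)}\to S_Y$ be a surjective isometry. Then there exist a separable subspace $M\subseteq X$ with $\dim(M)\geq 2$ and a separable subspace $N\subseteq Y$ such that $A\subseteq C_0(K,M)$ and $\Delta(S_{C_0(K,M)})=S_N$.} The proof is the back-and-forth construction used for Lemma \ref{separablec_0}. Starting from $A_1:=A$, pick a countable dense set $\{a_n\}_n\subseteq A_1$; since $K$ is second countable it is separable, so each continuous image $a_n(K)$ is separable, and hence $M^{(1)}:=\overline{Lin}\bigcup_n a_n(K)$ is a separable subspace of $X$ with $A_1\subseteq C_0(K,M^{(1)})$ (enlarge $M^{(1)}$ if necessary so that $\dim(M^{(1)})\geq 2$, which is possible since $\dim(X)\geq 2$). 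The one point where the present setting differs from the $\ell_\infty$- and $c_0$-sum settings is that we must know $C_0(K,M^{(k)})$ is separable whenever $M^{(k)}$ is; but this holds precisely because $K$ is second countable, so $C_0(K)$ is separable, and therefore $C_0(K,M^{(k)})\cong C_0(K)\,\widehat{\otimes}_{\varepsilon}\,M^{(k)}$, being the injective tensor product of two separable Banach spaces, is separable. Consequently $\Delta(S_{C_0(K,M^{(1)})})$ is a separable subset of $S_Y$; put $N_1:=\overline{Lin}\,\Delta(S_{C_0(K,M^{(1)})})$, note that $\Delta^{-1}(S_{N_1})$ is a separable subset of $S_{C_0(K,X)}$, let $M^{(2)}\supseteq M^{(1)}$ be the closed $X$-span of $M^{(1)}$ together with the ranges of a countable dense subset of $\Delta^{-1}(S_{N_1})$, and iterate. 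With $M:=\overline{\bigcup_k M^{(k)}}$ and $N:=\overline{\bigcup_k N_k}$ the inclusion $\Delta(S_{C_0(K,M)})\subseteq S_N$ is immediate, and the reverse inclusion follows from the Cauchy-sequence argument concluding the proof of Lemma \ref{separable}, using that $\Delta$ and $\Delta^{-1}$ are isometries on the respective spheres, that the subspaces $M^{(k)}$ and $N_k$ increase with $k$, and that $\overline{\bigcup_k C_0(K,M^{(k)})}=C_0(K,M)$.

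Granted this lemma, I would finish exactly as in Theorem \ref{main}. Let $Y$ be a Banach space and $\Delta\colon S_{C_0(K,X)}\to S_Y$ a surjective isometry; fix $z,\tilde z\in S_{C_0(K,X)}$ and $\lambda>0$, and apply the lemma with $A:=\{z,\tilde z\}$ to obtain separable subspaces $M\subseteq X$ and $N\subseteq Y$ with $\dim(M)\geq 2$, $z,\tilde z\in C_0(K,M)$, and $\Delta(S_{C_0(K,M)})=S_N$. Since $X$ is strictly convex so is $M$, and since $M$ is separable, Mazur's theorem \cite[Proposition 9.4.3]{Ro} gives that $\it{Sm}(M)$ is norm dense in $S_M$. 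Thus $C_0(K,M)$ satisfies the hypotheses of Theorem \ref{C(K)-disconex-non-smooth-localy}, so the restriction of $\Delta$ to $S_{C_0(K,M)}$, regarded as a surjective isometry onto $S_N$, extends to a surjective real-linear isometry $\Phi\colon C_0(K,M)\to N$; hence $\Vert\Delta(z)+\lambda\Delta(\tilde z)\Vert=\Vert\Phi(z)+\lambda\Phi(\tilde z)\Vert=\Vert\Phi(z+\lambda\tilde z)\Vert=\Vert z+\lambda\tilde z\Vert$. As $z,\tilde z\in S_{C_0(K,X)}$ and $\lambda>0$ are arbitrary, \cite[Lemma 2.1]{FangWang06} shows that $\Delta$ extends to a (surjective) real-linear isometry from $C_0(K,X)$ onto $Y$. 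The step I expect to require the most care is the separable-reduction lemma — specifically the point that second countability of $K$ forces $C_0(K,M)$ to be separable for every separable $M\subseteq X$, which is exactly what keeps the whole back-and-forth construction inside separable spaces; everything else transcribes the proofs of Lemma \ref{separablec_0} and Theorem \ref{main} essentially verbatim.
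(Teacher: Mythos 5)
Your proposal is correct and follows essentially the same route as the paper: a back-and-forth separable reduction to $C_0(K,M)$ with $M\subseteq X$ separable (so that Mazur's theorem supplies the density of smooth points), followed by an appeal to the already-established smooth case and \cite[Lemma 2.1]{FangWang06}. The only cosmetic differences are that you obtain the first separable subspace $M^{(1)}$ from the separability of the second countable space $K$ and the continuity of the $a_n$, whereas the paper approximates each $a_k$ by finitely valued functions using the total disconnectedness of $K$, and that at the final step you invoke Theorem \ref{C(K)-disconex-non-smooth-localy} where the paper cites Proposition \ref{C(K)-disconex} (your citation is in fact the more appropriate one when $K$ is not compact, and no circularity arises since the compact case is covered directly by Proposition \ref{C(K)-disconex}).
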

\begin{proof}  Let $Y$ be a Banach space, and let $\Delta : S_{C_0(K,X)}\to S_Y$ be a surjective isometry. Let $A$ be a separable subset of $S_{C_0(K,X)}$.
First, we check that  there exists a separable closed subspace $M$ of $X$ such that $$A\subseteq S_{C_0(K,M)}\subseteq S_{C_0(K,X)} .$$ Fix  a countable subset $\{ a_k:k\in \NN \ \  a_k\in A \}$  which is dense in $A$. Take $k,n\in \NN$. There exists  $a_k'\in C_{0}(K,X)$  such that $\Vert a_k-a_k'\Vert <\frac{1}{n}$  and $\overline{\{t\in K: a_k'(t)\neq 0 \}} $ is a compact subset of $K$. There exist $\mathcal{O}_1,\cdots ,\mathcal{O}_m$  compact open subsets of $K$ with $\overline{\{t\in K: a_k'(t)\neq 0 \}}\subseteq \cup_{i=1}^m\mathcal{O}_i$. Then we have $C_0(K,X)=C(K_1,X)\oplus_\infty C_0(K_2,X)$ where $K_1:=\cup_{i=1}^m\mathcal{O}_i$ and $K_2:=K\setminus K_1$. It is clear that $a_k'\in C(K_1,X)$ and $K_1$ is a totally disconnected  compact Hausdorff space.
For each $t\in K_1$, define the open subset of $K_1$ $$V_t:=\{ s\in K_1: \Vert a_k'(s)-a_k'(t) \Vert <\frac{1}{2n}    \}.$$ There exists  a clopen subset $\mathcal{O}_t$ of $K_1$ such that $t\in \mathcal{O}_t\subseteq V_t$ for every $t\in K_1$. Since $K_1$ is compact and $K_1=\cup_{t\in K_1}	\mathcal{O}_t$,  there exists a finite subset $C_{(n,k)}$ of $K_1$ such that $K_1= \cup_{t\in C_{(n,k)}}	\mathcal{O}_t$. We can assume that $\mathcal{O}_t\cap \mathcal{O}_{t'}=\emptyset$ whenever $t\neq t'$ and $t,t'\in C_{(n,k)}$.  We define the function $h_{(n,k)} := \sum_{t\in C_{(n,k)}} \chi_{O_t} a_k'(t)$.  For $s\in K_1$, there exists $t\in C_{(n,k)}$ such that $s\in \mathcal{O}_t$, and hence $$\Vert a_k'(s)-h_{(n,k)}(s)\Vert =\Vert a_k'(s)- a_k'(t)\Vert <\frac{1}{n}.$$Then $\Vert a_k'-h_{(n,k)}\Vert _{C(K_1,X)} =\Vert a_k'-h_{(n,k)}\Vert\leq \frac{1}{n}$. This implies that  $\Vert a_k-h_{(n,k)}\Vert\leq \frac{2}{n}$.
We consider the separable closed subspace  $$M_0=\overline{Lin}   \{a_k'(t):k,n\in \NN ,\  t\in C_{(n,k)} \} .$$ It follows that $C_0(K,M_0)$ is a closed separable subspace of $C_0(K,X)$, and that  $h_{(n,k)}\in C_0(K,M_0)$ for all $n,k\in \NN$, and hence $A\subseteq S_{C_0(K,M_0)}$.
We follow the argument in the proof of Lemma \ref{separable} to derive the existence of separable closed subspaces $M\subseteq X$ and $N\subseteq Y$ such that $A\subseteq S_{C_0(K,M)}$ and  $\Delta ( S_{C_0(K,M)})=S_N$.
Let us take $z,\tilde{z}\in S_{C_0(K,X)}$ and $\lambda >0$. By the previous paragraph, there exist separable closed subspaces $M\subseteq X$ and $N\subseteq Y$ such that $z, \tilde{z}\subseteq S_{C_0(K,M)}$ and $\Delta ( S_{C_0(K,M)})=S_N$. Since $X$ is a strictly convex,  $M$ is a strictly convex and  by  Mazur's theorem   \cite[Proposition 9.4.3]{Ro}), the set of all smooth points of $M$ is dense in $S_M$.  By Proposition \ref{C(K)-disconex}, the restriction of	$\Delta$ to $S_{C_0(K,M)}$, regarded as a mapping onto $S_N$, admits an extension to a surjective real-linear isometry $\Phi$ from $C_0(K,M)$ to $N$. It follows that $$\Vert \Delta (z) +\lambda \Delta (\tilde{z})\Vert = \Vert \Phi (z +\lambda \tilde{z})\Vert =\Vert z+\lambda \tilde{z} \Vert ,$$
and hence by \cite[Lemma 2.1]{FangWang06},  $\Delta$ can be extended to  a real-linear isometry of $C_0(K,X)$ onto $Y$.
\end{proof}

A 1910 theorem of Brouwer characterizes the Cantor set as the unique totally disconnected, compact metric space without isolated points.

\begin{corollary}\label{Cantor}  Let $X$ be a strictly convex  Banach space such that dim$(X) \geq 2 $, and let $ \mathfrak{C}$ be the Cantor set. Then $C(\mathfrak{C},X)$ satisfies the Mazur--Ulam property.
\end{corollary}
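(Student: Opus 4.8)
The plan is to deduce the statement directly from Proposition \ref{C(K)-disconex-non-smooth}, so essentially all that remains is to verify that the Cantor set $\mathfrak{C}$ meets its hypotheses. First I would recall that $\mathfrak{C}$ is compact and Hausdorff (hence locally compact Hausdorff), and that by the classical description quoted just above —or directly from its usual construction— it is totally disconnected; moreover $\vert \mathfrak{C}\vert$ is the cardinality of the continuum, so in particular $\vert \mathfrak{C}\vert \geq 2$. It remains only to see that $\mathfrak{C}$ is second countable: this is immediate since every compact metric space is second countable, and $\mathfrak{C}$ is metrizable. (Alternatively, one can invoke the remark preceding Proposition \ref{C(K)-disconex-non-smooth}: $\mathfrak{C}$ metrizable forces $C(\mathfrak{C})$ separable, which forces $\mathfrak{C}$ second countable.)

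Next I would observe that, $\mathfrak{C}$ being compact, every continuous $X$-valued function on $\mathfrak{C}$ automatically vanishes at infinity, so $C_0(\mathfrak{C},X) = C(\mathfrak{C},X)$ isometrically. With this identification in hand, I would apply Proposition \ref{C(K)-disconex-non-smooth} with $K = \mathfrak{C}$ and the given strictly convex $X$ with $\dim(X_{\RR})\geq 2$, which yields at once that $C(\mathfrak{C},X)$ satisfies the Mazur--Ulam property.

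There is no genuine obstacle here: all the analytic content has already been carried out in Propositions \ref{C(K)-strictly convex}, \ref{Afin-Face}, \ref{C(K)-disconex} and \ref{C(K)-disconex-non-smooth} (the separable-reduction trick via Lemma \ref{separable}, together with Mazur's theorem to recover norm-density of smooth points on the separable slices). The only point that requires a moment's care is the bookkeeping that ties $\mathfrak{C}$ to the hypothesis ``$K$ second countable'' through metrizability; once that is noted, the corollary is a one-line consequence.
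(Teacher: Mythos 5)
Your proposal is correct and follows exactly the route the paper intends: the Cantor set is a totally disconnected, compact (hence locally compact, second countable) metric space with $\vert \mathfrak{C}\vert \geq 2$, and $C(\mathfrak{C},X)=C_0(\mathfrak{C},X)$, so Proposition \ref{C(K)-disconex-non-smooth} applies directly. The paper leaves this verification implicit (signalled only by the preceding remark on Brouwer's characterization), and your bookkeeping of the hypotheses is exactly what is needed.
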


\section*{Acknowledgements}

The  author is very grateful to M. Cabrera and A. Rodr\'iguez Palacios for fruitful remarks concerning the matter of the paper. The author is  partially supported  by the Junta de Andaluc\'ia grant FQM199.

\end{document}